\documentclass[11pt,a4paper]{article}

\usepackage[a4paper,top=1in,bottom=1in,left=1in,right=1in]{geometry} % For sufficient margins on A4
\usepackage{setspace}
\RequirePackage{amsthm,amsmath,amsfonts,amssymb}
\allowdisplaybreaks[4]
\RequirePackage[numbers,sort&compress]{natbib}
\RequirePackage[colorlinks,citecolor=blue,urlcolor=blue]{hyperref}
\RequirePackage{graphicx}
\usepackage{booktabs}
\usepackage{float}
\usepackage[caption=false]{subfig} % 避免与 caption 包/类的 caption 设置打架
\usepackage{authblk}

\theoremstyle{plain}

\newtheorem{theorem}{Theorem}[section]
\newtheorem{lemma}[theorem]{Lemma}

\newtheorem{proposition}[theorem]{Proposition}
\theoremstyle{remark}
\newtheorem{definition}[theorem]{Definition}
\newtheorem{assumption}[theorem]{Assumption}
\newtheorem{remark}[theorem]{Remark}

\newcommand{\bbA}{{\bf A}}
\newcommand{\bSigma}{\boldsymbol{\Sigma}}
\newcommand{\bLambda}{\boldsymbol{\Lambda}}
\newcommand{\bGamma}{\boldsymbol{\Gamma}}
\newcommand{\bbnu}{\boldsymbol{\nu}}

\newcommand{\bbB}{{\bf B}}
\newcommand{\bbC}{{\bf C}}

\newcommand{\bbD}{{\bf D}}

\newcommand{\bbe}{{\bf e}}
\newcommand{\bbE}{{\bf
		E}}

\newcommand{\bbP}{{\bf P}}

\newcommand{\bbH}{{\bf H}}

\newcommand{\bbI}{{\bf I}}

\newcommand{\bbM}{{\bf M}}

\newcommand{\bbS}{{\bf S}}

\newcommand{\bbU}{{\bf U}}
\newcommand{\bbu}{{\boldsymbol u}}
\newcommand{\bbV}{{\bf V}}

\newcommand{\bbW}{{\bf W}}

\newcommand{\bbX}{{\bf X}}

\newcommand{\bbx}{{\bf x}}

\newcommand{\bbY}{{\bf Y}}
\newcommand{\bby}{{\bf y}}

\newcommand{\bxi} {\boldsymbol \xi}

	\newcommand{\tr}{{\rm tr}}

	\newcommand{\bqa}{\begin{eqnarray}}
		\newcommand{\eqa}{\end{eqnarray}}
	\newcommand{\bqn}{\begin{eqnarray*}}
		\newcommand{\eqn}{\end{eqnarray*}}
	\hypersetup{
		colorlinks=true,
		linkcolor=blue,
		anchorcolor=blue,
		citecolor=blue,
		urlcolor=blue,
		CJKbookmarks=true
	}
	
\begin{document}

		\title{The Asymptotic Distribution of Sample Canonical Directions in Gaussian Spiked High-dimensional CCA}

		\author[1]{Zhangni Pu\thanks{\texttt{puzn687@nenu.edu.cn}}}
		\author[1]{Zhangxiao Zhuo\thanks{\texttt{zhangxz722@nenu.edu.cn}}}
		\author[1]{Jiang Hu\thanks{\texttt{huj156@nenu.edu.cn}}}
		
		\affil[1]{KLASMOE, Key Laboratory of Big Data Analysis of Jilin Province and School of Mathematics \& Statistics, Northeast Normal University, No. 5268 People's Street, Changchun 130024, China}
		\date{}
		%\footnotetext[1]{*Corresponding authors.} 
		\maketitle
			
			\begin{abstract}
				This paper studies the asymptotic behavior of sample canonical directions in a finite-rank spiked high-dimensional canonical correlation analysis model under a Gaussian population assumption. Under the asymptotic regime in which the dimensions of the two data blocks grow proportionally with the sample size, sample canonical directions are generally not consistent estimators of their population counterparts, even when the corresponding sample canonical correlations separate from the bulk spectrum. To quantify directional recovery, we investigate the squared alignment between a sample canonical direction and its associated population direction. For each simple population spike, we first establish a deterministic first-order limit for this squared alignment, which gives an explicit measure of the population-level directional information retained by the sample direction. We then prove a central limit theorem for its fluctuations around the deterministic limit, with an explicit asymptotic variance expressed through deterministic limits of resolvent trace functionals. To make the theoretical quantities computable from data, we further construct plug-in estimators for both the limiting mean and the asymptotic variance by inverting the deterministic outlier eigenvalue map, and prove their consistency. Numerical simulations and a real-data illustration support the theoretical results and demonstrate how the proposed estimators assess the recovery quality of sample canonical directions.
			\end{abstract}
			\textbf{Keywords:} Canonical correlation analysis, Random matrix theory, Spiked model, Sample canonical directions, Central limit theorem, Eigenvector alignment
		
		\section{Introduction}
		
		Canonical correlation analysis (CCA), introduced by Hotelling \cite{hotelling1935most, harold1936relations}, is a classical method for studying the linear association between two random vectors. Given two centered random vectors $\bbx$ in $\mathbb{R}^p$ and $\bby$ in $\mathbb{R}^q$, CCA seeks pairs of linear combinations whose correlations are successively maximized. In the population formulation, the squared canonical correlations and the associated canonical directions are determined by the eigenstructure of the population canonical correlation matrix
		$$\bSigma_{xx}^{-1}\bSigma_{xy}\bSigma_{yy}^{-1}\bSigma_{yx}.$$ In the sample formulation, they are estimated by the eigenvalues and eigenvectors of the sample canonical correlation matrix $$\bbS_{xx}^{-1}\bbS_{xy}\bbS_{yy}^{-1}\bbS_{yx}.$$ This eigenvalue-eigenvector representation makes CCA particularly suitable for analysis through random matrix theory.
		
		Classical multivariate analysis studies CCA in the regime where $p$ and $q$ are fixed while $n\to\infty$; see, for example, \cite{muirhead1982aspects,anderson2003introduction}. In many modern applications, however, the dimensions of the two data blocks are comparable to the sample size. In such high-dimensional regimes, sample canonical correlations and their associated directions exhibit behavior that is fundamentally different from classical low-dimensional asymptotics. In particular, sample canonical directions need not consistently estimate their population counterparts, even when the associated signal eigenvalues are separated from the noise bulk.
		
		Let the columns of $(\bbX^\top,\bbY^\top)^\top$ be $n$ independent observations of the centered random vector $(\bbx^\top,\bby^\top)^\top$. Denote the population covariance matrix by
		\begin{align*}
			\bSigma=\begin{pmatrix} \bSigma_{xx} & \bSigma_{xy}\\
				\bSigma_{yx} &\bSigma_{yy}
			\end{pmatrix}.
		\end{align*}
		The population squared canonical correlations $r_i$ and the corresponding population canonical directions $\bbu_i$ are determined by $$\bSigma_{xx}^{-1}\bSigma_{xy}\bSigma_{yy}^{-1}\bSigma_{yx}\bbu_i = r_i \bbu_i.$$ The sample analogues are the eigenvalues $l_i$ and eigenvectors $\bbnu_i$ of $$\bbS_{xx}^{-1}\bbS_{xy}\bbS_{yy}^{-1}\bbS_{yx}.$$
		The main objective of this paper is to quantify, in a finite-rank spiked high-dimensional CCA model, how well a sample canonical direction $\bbnu_i$ aligns with its population counterpart $\bbu_i$.
		
		Throughout the theoretical analysis we work in the Gaussian spiked CCA model
		$$\bbX=\bLambda\bbY+\bGamma\bbW,~~\bLambda\bLambda^\top+\bGamma\bGamma^\top=\bbI_p,$$ following the spiked CCA framework in \cite{bao2019canonical}. Here $\bbY$ and $\bbW$ have independent Gaussian entries. The nonzero eigenvalues of $\bLambda\bLambda^\top$ are the population spikes $r_1,\dots,r_k$. Since canonical directions are defined only up to sign, we measure directional recovery through the squared alignment $$\langle \bbu_i, \bbnu_i \rangle^2.$$
		This quantity is the squared cosine of the angle between the population and sample canonical directions and is invariant under sign changes of the eigenvectors.
		
		We work under the following dimensional assumption.
		
		\begin{assumption}[On the dimensions]\label{assumption1}
			We assume that $p:=p(n)$, $q:=q(n)$, and, as $n\to \infty$,
			$$p/n\to c_1\in(0,1),~~~~q/n\to c_2\in(0,1),~~~~\mbox{ s.t.

			} c_1+c_2\in(0,1).$$
			Without loss of generality, we always work with the additional assumption $p>q$, thus $c_1>c_2$.
		\end{assumption}
		
		\begin{assumption}[On the population spiked model]\label{assumption2}
			We assume that $\text{rank}(\bSigma_{xy})= k$ for some fixed positive integer $k$. Let $r_i$ denote the $i$-th largest non-zero eigenvalue of the population matrix $\bSigma_{xx}^{-1}\bSigma_{xy}\bSigma_{yy}^{-1}\bSigma_{yx}$.
			We assume these eigenvalues are "spiked":
			$$1>r_1\geq \dots \geq r_i\geq \dots \geq r_k>r_c:=\sqrt{\frac{c_1c_2}{(1-c_1)(1-c_2)}}.$$
		\end{assumption}
		\begin{remark}
			The threshold $r_c$ is the phase-transition threshold for sample canonical correlation matrices \cite{bao2019canonical, ma2023sample}.
			Population spikes above this threshold produce sample eigenvalues that separate from the upper edge of the bulk limiting spectral distribution. Under Assumption \ref{assumption2}, the associated sample eigenvalues are therefore sample spiked eigenvalues, and the corresponding sample eigenvectors can be studied individually.
		\end{remark}
		
		\subsection{Previous related theoretical results}\label{eigenvalue}
		We first recall the spectral theory for sample canonical correlation matrices in the high-dimensional null and spiked settings. The empirical spectral distribution (ESD)
		$$F_n(x):=\frac{1}{q} \sum_{i=1}^q I(l_i \le x)$$
		converges weakly to a deterministic probability distribution $F(x)$ \cite{wachter1980limiting}, whose density is
		\begin{align}
			f(x)=\frac{1}{2\pi c_2}\frac{\sqrt{(d_+-x)(x-d_-)}}{x(1-x)}I(d_-\leq x\leq d_+),
		\end{align}
		where $d_\pm=(\sqrt{c_1(1-c_2)}\pm\sqrt{c_2(1-c_1)})^2$. Central limit theorems for linear spectral statistics and related global spectral quantities have been further developed in \cite{yang2012convergence,yang2015independence,zhang2023limiting}.
		
		In the finite-rank setting, the sample canonical correlation matrix $\bbS_{xx}^{-1}\bbS_{xy}\bbS_{yy}^{-1}\bbS_{yx}$ can be viewed as a finite-rank perturbation of the null model. The perturbation does not change the limiting spectral distribution of the bulk, but sufficiently strong population canonical correlations generate sample outliers.
		Bao et al. \cite{bao2019canonical} analyzed the limiting behavior and fluctuations of its largest eigenvalues in this spiked setting. Let
		\begin{align} \label{eq:3}
			\gamma_i:=r_i(1-c_1+c_1r_i^{-1})(1-c_2+c_2r_i^{-1}).
		\end{align}
		\begin{theorem}[Theorem 1.7 of \cite{bao2019canonical}]\label{th:1.4}
			Under Assumptions \ref{assumption1} and \ref{assumption2}, the squares of the sample canonical coefficients exhibit the following convergence:
			\begin{enumerate}
				\item (Outliers) For $1\leq i\leq k$, as $n\to\infty$, we have $l_i-\gamma_i\xrightarrow{a.s.}0$.
				\item (Sticking eigenvalues) For each fixed $i\geq k+1$, as $n\to\infty$, we have $l_i-d_+\xrightarrow{a.s.}0$.
			\end{enumerate}
		\end{theorem}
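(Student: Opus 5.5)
The statement is Theorem 1.7 of \cite{bao2019canonical}, quoted here as background. I would prove it by a finite-rank perturbation argument around the null model, reducing outlier locations to a low-dimensional determinant equation.

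\textbf{Reduction.} Use the model $\bbX=\bLambda\bbY+\bGamma\bbW$ with Gaussian $\bbY,\bbW$. The canonical correlations are invariant under $\bbX\mapsto \bbA\bbX$ and $\bbY\mapsto\bbB\bbY$ for invertible $\bbA,\bbB$. By this invariance we may take $\bLambda=\mathrm{diag}(\sqrt{r_1},\dots,\sqrt{r_k},0,\dots)$ in a $p\times q$ sense and $\bGamma$ diagonal.

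The nonzero eigenvalues of $\bbS_{xx}^{-1}\bbS_{xy}\bbS_{yy}^{-1}\bbS_{yx}$ are the squared singular values of $P_X P_Y$. Here $P_X$ and $P_Y$ are the projections onto the row spaces of $\bbX$ and $\bbY$ in $\mathbb{R}^n$. Now $\bbX$ differs from the independent null matrix $\bbW$ only through a rank-$k$ modification of its rows. Hence $P_X$ differs from a null-model projection $P_{\tilde X}$, with $\tilde X$ independent of $\bbY$, by a perturbation of rank at most $2k$.

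\textbf{Sticking eigenvalues.} By Cauchy interlacing for this finite-rank perturbation, $l_{i+2k}^{\rm null}\le l_i\le l_{i-2k}^{\rm null}$ for the appropriate indices. The null model is a Jacobi/MANOVA ensemble, and its largest eigenvalue converges almost surely to $d_+$ (Wachter's law together with known edge results). This yields $l_i\to d_+$ for each fixed $i\ge k+1$, once the outlier count is fixed at $k$ by the next step.

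\textbf{Outliers.} Write the eigenvalue equation via Schur complements. A number $z>d_+$ that is not an eigenvalue of the null matrix is an eigenvalue of the spiked matrix iff
\[
\det\big(\bbI_k - \bbM_n(z)\big)=0,
\]
for a $k\times k$ (or $2k\times 2k$) matrix $\bbM_n(z)$. The entries of $\bbM_n(z)$ are quadratic forms in the rows $\bbY_{1:k}$ and in the resolvent of the null Jacobi matrix, weighted by the $r_i$.

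\begin{itemize}
\item[(a)] Using Gaussianity, i.e.\ rotational invariance of $\bbY$ and $\bbW$ rows, I would prove an isotropic law: $\bbM_n(z)\to \mathrm{diag}(g(z,r_i))$ almost surely, uniformly on compacts of $(d_+,1)$. The limit $g$ is explicit through the Stieltjes transform of $F$.
\item[(b)] I would solve $g(z,r)=1$. The solution is $z=\gamma(r)$ exactly when $r>r_c$, with $\gamma$ as in \eqref{eq:3}, and $\gamma$ is increasing with $\gamma(r_c)=d_+$.
\item[(c)] A Rouché/Hurwitz argument on $\det(\bbI-\bbM_n)$ then gives exactly one eigenvalue near each $\gamma_i$, counted with multiplicity, and no others above $d_+ +\varepsilon$.
\end{itemize}

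\textbf{Main obstacle.} The hardest step is the uniform almost-sure convergence of the quadratic forms defining $\bbM_n(z)$. This requires handling the two nested projections $P_X P_Y$ jointly. I would use Gaussian concentration, Borel--Cantelli with high moments, and a net argument in $z$. The explicit algebra identifying the limit and inverting it to $\gamma_i$ is lengthy but mechanical.
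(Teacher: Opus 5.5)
Your outline is correct and follows essentially the same route as the source: the paper quotes this result from Bao et al.~\cite{bao2019canonical} without proof, but the reduction it borrows from them in Section~\ref{ce} is exactly your determinant equation together with steps (a)--(c). There, outliers solve $\det M_\Lambda(z)=0$ with $M_\Lambda(z)=\bbA(z)-\bbB(z)\bbD(z)^{-1}\bbC(z)$, and $M_\Lambda$ is approximated by a deterministic diagonal matrix whose entries satisfy $m_j(\gamma_j)=0$. Two remarks. First, in (a) you should make explicit that $\bbP_y$ depends on the very rows of $\bbY$ that enter your quadratic forms, so these are not forms in independent vectors; this is handled, as in the paper's treatment of $I_{k2}$, by $\bbY(\bbP_y-z\bbI_n)=(1-z)\bbY$ together with the independence of the Haar left singular vectors of $\bbY$ from $(\bLambda_y,\bbV_y,\bbW)$. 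Second, the sticking part does not need the outlier count: the row space of $\bbX$ contains that of $\bbU_2^\top\bbW$, so the perturbation is positive semidefinite of rank $k$, and interlacing against the null pair $(\bbU_2^\top\bbW,\bbY)$ gives $\lambda_i^{\mathrm{null}}\le l_i\le\lambda_{i-k}^{\mathrm{null}}$ directly.
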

		
		Extensions of the spiked CCA eigenvalue theory to non-Gaussian populations and related Fisher-type models have been studied in \cite{ma2023sample,yang2022limiting,bai2022limiting,hou2023spiked}. These eigenvalue results provide the spectral foundation for signal detection in high-dimensional CCA. However, eigenvalue separation alone does not quantify directional recovery. A separated sample canonical correlation indicates that a signal is detectable from the spectrum, but it does not describe how accurately the associated sample canonical direction estimates the corresponding population direction. This distinction motivates the eigenvector analysis developed in this paper.
		
		The study of eigenvectors in random matrix theory (RMT) is generally more challenging than that of eigenvalues. Global-level results have investigated eigenvector empirical spectral distributions (VESD) and related quantities \cite{bai2007asymptotics, xia2013convergence, xi2020convergence, li2023eigenvector}. Local-level studies, which are close to the goal of this paper, characterize the "distance" or "alignment" between an individual sample eigenvector and a fixed population direction. Such phenomena have been studied for spiked sample covariance matrices \cite{2007paul,bao2022statistical,pu2024asymptotic}, deformed Wigner matrices \cite{fan2022asymptotic,lei2016goodness,bao2026eigenvector}, and information-plus-noise models \cite{2020baosingular,liu2023asymptotic}. These works show that the squared inner product between a sample eigenvector and its population counterpart is a natural measure of directional recovery. 
		
		For high-dimensional CCA, the eigenvector problem remains less developed. Bykhovskaya and Gorin \cite{bykhovskaya2023high} study the high-dimensional behavior of canonical variables, or canonical variates, and obtain first-order limits for angles between sample and population canonical variables such as $\bbX^\top\bbnu_i$, $\bbX^\top\bbu_i$. These angles are defined in the sample space. They are different from the alignment between the canonical direction vectors themselves, namely the eigenvectors and their population counterparts in the original variable spaces. The latter quantity is sensitive to the covariance structure and coordinate normalization. In particular, an asymptotic theory for the squared alignment $\langle \bbu_i, \bbnu_i \rangle^2$ in a finite-rank spiked CCA model does not seem to be available in the existing literature.
		
		\subsection{Contributions and Overview of Results}\label{contribution}
		This paper develops a first- and second-order asymptotic theory for sample canonical directions associated with finite-rank population spikes in high-dimensional CCA under Gaussian assumptions. Our object of interest is the squared alignment $$\langle \bbu_i, \bbnu_i \rangle^2,$$ which measures the amount of population-level directional information retained by the sample canonical direction.
		
		Our first contribution is a deterministic first-order characterization of this squared alignment. For each simple spike $r_i$, we prove that
		$$\langle \bbu_i, \bbnu_i \rangle^2 \xrightarrow{p} \frac{1}{1+d(r_i)},$$
		where $d(r_i)$ is an explicit function of the population spike $r_i$ and $c_1, c_2$. The limit is generally strictly smaller than one. Thus, the sample canonical direction is not classically consistent in the high-dimensional regime. At the same time, the result gives an explicit limiting measure of the recovery quality of the sample direction. Similar non-consistency phenomena and nontrivial eigenvector alignment limits have appeared in other spiked random matrix models, including spiked sample covariance matrices \cite{2007paul,bao2022statistical,pu2024asymptotic}, deformed Wigner matrices \cite{fan2022asymptotic}, and information-plus-noise matrices \cite{2020baosingular,liu2023asymptotic}.
		
		Our main contribution is a CLT for the fluctuation of the squared alignment. Specifically, we show that
		$$ \sqrt{n}\left( \langle \bbu_i, \bbnu_i \rangle^2 - \frac{1}{1+d(r_i)} \right)$$ converges in distribution to a centered normal distribution with variance $\sigma^2(r_i)/(1+d(r_i))^4$. Here $\sigma(r_i)^2$ is given explicitly in terms of deterministic limits of resolvent trace functionals. To the best of our knowledge, this is the first CLT for the squared alignment between a sample canonical direction and its population counterpart in the finite-rank spiked high-dimensional CCA model. This result upgrades the directional recovery theory from a deterministic approximation to a distributional approximation.
		
		Our third contribution is to make the preceding asymptotic quantities estimable from data. Since both the limiting mean and the asymptotic variance depend on the unknown population spike $r_i$, we estimate $r_i$ through the deterministic outlier map
		$$\hat{r}_i=\gamma^{-1}(l_i).$$ Substituting $\hat{r}_i$ into the deterministic expressions yields plug-in estimators
		$$\hat{\mu}_i=\dfrac{1}{1+d(\hat{r}_i)}~~~~and~~~~\hat{\tau}_i^2=\dfrac{\sigma^2(\hat{r}_i)}{(1+d(\hat{r}_i))^4}.$$
		We prove that these estimators consistently estimate the limiting mean and asymptotic variance of the squared alignment, respectively. Consequently, the theory provides a sample-based measure of both the strength and the uncertainty of directional recovery.
		
		On the technical side, our proof is based on a detailed analysis of the characteristic equation for the sample canonical correlation matrix. By applying the Schur complement, the eigenvector problem is reduced to the study of a finite-dimensional random matrix involving resolvents of the noise part of the CCA matrix. This reduction expresses the normalization of the sample eigenvector in terms of finite-dimensional random matrices involving resolvents of the noise part of the model. We then derive deterministic equivalents and fluctuation estimates for the resulting resolvent trace functionals. The Gaussian assumption allows us to use the Gaussian Poincaré inequality and Stein’s lemma, or equivalently the Gaussian cumulant expansion, to control variances and compute the limiting covariance structure.
		
		Finally, we provide numerical illustrations to support the theoretical results. The simulation study illustrates the first-order deterministic limit and the Gaussian fluctuation in rank-one and multi-spike settings. We also present a real-data illustration using the limestone grassland community data originally reported by Gittins \cite{Gittins1985} and also analyzed in Bao et al. \cite{bao2019canonical}. In this example, the leading sample squared canonical correlation separates from the bulk, and the plug-in estimate $\hat{\mu}_1$ provides a quantitative assessment of the recovery quality of the leading sample canonical direction. This illustrates how the proposed theory can be used to interpret sample canonical directions when the corresponding population directions are unobservable.
		
		\subsection{Organization and notation} 
		The remainder of the paper is organized as follows. Section \ref{mainresults} presents the preliminary technical lemmas and our main theoretical results, including the first-order limit and the central limit theorem for the squared alignment. Section \ref{sec:estimation} constructs plug-in estimators for the limiting mean and asymptotic variance of the squared alignment and presents a real-data illustration. Section \ref{simulation} reports Monte Carlo simulations supporting the theoretical results. Section \ref{zmcl} proves the main theorems, while the remaining sections provide the required technical estimates.
		
		Throughout the paper, $||\cdot||$ denotes the Euclidean norm for vectors and the operator norm for matrices. For a matrix $\bbA$, $\tr\bbA$ denotes its trace, and $\bbA^\top$ denotes its transpose. We write $\xrightarrow{p}$, $\xrightarrow{a.s.}$, and $\xrightarrow{D}$ for convergence in probability, almost sure convergence, and convergence in distribution, respectively.
		
		\section{Main Results}\label{mainresults}
		In this section, we state our main results and introduce the necessary technical lemmas that support our proofs and calculations in the subsequent sections. 
		
		\subsection{Preliminary Technical Lemmas} 
		We begin with several standard definitions and results that are fundamental to our analysis. 
		\begin{definition}\label{defOp} 
			Let ${\xi_n}$ be a sequence of random variables and $\{C_n\}$ be a sequence of positive constants. We say $\xi_n = O_p(C_n)$ if for all $\varepsilon>0$, there exist constants $N_{\varepsilon}$ and $K_{\varepsilon}$ such that for all $n>N_{\varepsilon}$, $\mathbb{P}(|\xi_n/C_n|\leq K_{\varepsilon})\geq 1-\varepsilon$. \end{definition} 
		Note that if $\sup_n\mathbb{E}|\xi_n|^l<\infty$ for some $l\geq 1$, then $\xi_n=O_p(1)$ by Markov's inequality. The following cumulant expansion formula plays a central role in our computation, with a proof found in \cite{2009LPCentral}. 
		
		\begin{lemma}[Cumulant expansion formula]\label{lemma-cumulant} 
			Let $f:\mathbb{R}\to\mathbb{C}$ be a smooth function, and denote by $f^{(k)}$ its $k$th derivative. Then for every fixed $l\in \mathbb{N}$, we have 
			\begin{align}\label{eqce} 
				\mathbb{E}[\xi f(\xi)]=\sum_{k=0}^l\frac{\kappa_{k+1}(\xi)}{k!}\mathbb{E}[f^{(k)}(\xi)]+\mathcal{R}_{l+1}, 
			\end{align} 
			assuming that all expectations in \eqref{eqce} exist, where $\mathcal{R}_{l+1}$ is the remainder term (depending on $f$ and $\xi$), such that for any $t>0$, 
			\begin{align*}
				\mathcal{R}_{l+1}=O(1)\mathbb{E}[|\xi|^{l+2}I{|\xi|>t}]\sup_{x\in\mathbb{R}}|f^{(l+1)}(x)|+O(1)\mathbb{E}[|\xi|^{l+2}]\sup_{|x|\leq t}|f^{(l+1)}(x)|. 
			\end{align*} 
			In particular, if $\xi$ is from a standard normal distribution, $\kappa_1(\xi)=0, \kappa_2(\xi)=1$, and $\kappa_{j}(\xi)=0$ for $j \ge 3$. We then get the famous Stein's Lemma: \begin{align}\label{sseqce} 
				\mathbb{E}[\xi f(\xi)]=\mathbb{E}[f'(\xi)]. 
			\end{align} 
		\end{lemma}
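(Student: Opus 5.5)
The statement to prove is the cumulant expansion formula (Lemma \ref{lemma-cumulant}) together with its Gaussian specialization, Stein's lemma \eqref{sseqce}. I plan to first prove the exact Gaussian identity, which is all that is used later. Then I will sketch the general expansion by Taylor expansion with remainder, arranged so that the cumulants appear as coefficients.

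\textbf{Stein's lemma.} Let $\xi\sim N(0,1)$ with density $\phi(x)=(2\pi)^{-1/2}e^{-x^2/2}$. The key fact is $\phi'(x)=-x\phi(x)$, so
$$\mathbb{E}[\xi f(\xi)]=\int_{\mathbb{R}} f(x)\,x\phi(x)\,dx=-\int_{\mathbb{R}} f(x)\phi'(x)\,dx .$$
Integrating by parts on $[-M,M]$ gives
$$-\bigl[f\phi\bigr]_{-M}^{M}+\int_{-M}^{M} f'(x)\phi(x)\,dx .$$
I need the boundary term to vanish along some sequence $M\to\infty$. Because $\mathbb{E}|f'(\xi)|$ and $\mathbb{E}|\xi f(\xi)|$ are finite, the function $f\phi$ has an integrable derivative $f'\phi-xf\phi$. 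Hence $f\phi$ has finite limits at $\pm\infty$. These limits must be $0$, since $\int |x f\phi|\,dx<\infty$ forbids a nonzero limit. This yields \eqref{sseqce}. The cumulants of $N(0,1)$ are read off from $\log\mathbb{E}e^{t\xi}=t^2/2$.

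\textbf{General expansion.} I will prove the equivalent moment form by induction on $l$. One route is to show that the coefficients $\kappa_{k+1}/k!$ are forced. To see this, apply the identity to the exponentials $f(x)=e^{itx}$, and note that $\frac{d}{dt}\log\mathbb{E}e^{it\xi}$ generates exactly $\sum_k \kappa_{k+1}(it)^k/k!$.

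A more direct route is to Taylor-expand $f(\xi)$ about $0$ to order $l$:
$$f(\xi)=\sum_{j\le l}\frac{f^{(j)}(0)\,\xi^j}{j!}+\text{(Lagrange remainder)} .$$
Multiplying by $\xi$ and taking expectations gives $\mathbb{E}[\xi f(\xi)]$ as $\sum_j m_{j+1} f^{(j)}(0)/j!$ plus an error. The same expansion applied to each $\mathbb{E}f^{(k)}(\xi)$ on the right-hand side gives a similar sum. Matching the coefficients of $f^{(j)}(0)$ then reduces to the moment–cumulant recursion
$$m_{j+1}=\sum_{k=0}^{j}\binom{j}{k}\kappa_{k+1}\,m_{j-k},$$
which is a standard identity. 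Finally, the remainder terms, of order $|\xi|^{l+2}\sup|f^{(l+1)}|$, are split on the events $\{|\xi|\le t\}$ and $\{|\xi|>t\}$. This produces exactly the two terms in $\mathcal{R}_{l+1}$.

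\textbf{Main obstacle.} The only delicate point is bookkeeping: the remainders produced by expanding each $\mathbb{E}f^{(k)}(\xi)$ must be shown to be of the same form, with the cumulants bounded by moments up to order $l+2$. Since \cite{2009LPCentral} contains the full argument, I would cite it for this part. For the Gaussian case used throughout the paper, the integration-by-parts argument above is complete by itself.
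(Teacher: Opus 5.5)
The paper gives no proof of this lemma. It cites \cite{2009LPCentral} for the general expansion and presents \eqref{sseqce} as the Gaussian specialization of \eqref{eqce}. Your proposal is correct up to one small repair, and it does more than the paper.

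Your integration-by-parts proof of Stein's lemma is complete. Integrability of $(f\phi)'=f'\phi-xf\phi$ gives limits of $f\phi$ at $\pm\infty$, and integrability of $xf\phi$ forces those limits to be zero. The limits then exist along all $M\to\infty$, not just a subsequence.

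Keep this direct route. Inserting the Gaussian cumulants into \eqref{eqce} only yields $\mathbb{E}[\xi f(\xi)]=\mathbb{E}[f'(\xi)]+\mathcal{R}_{l+1}$, and the stated bound on $\mathcal{R}_{l+1}$ does not vanish for a general smooth $f$. So the paper's ``we then get'' is not literally a deduction. Your separate argument is what actually establishes the exact identity, and that identity is the only case used later in the paper.

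For the general case, your reduction is the standard one: Taylor-expand both sides about $0$, then match the coefficients of $f^{(j)}(0)$ using $m_{j+1}=\sum_{k}\binom{j}{k}\kappa_{k+1}m_{j-k}$. Two points need attention.

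\textbf{The repair.} Since $f$ is complex-valued, the Lagrange form of the remainder with a single intermediate point is not available; $e^{ix}$ on $[0,2\pi]$ is a counterexample. Use the integral form instead. It gives
\[
\Bigl|f^{(k)}(\xi)-\sum_{j\le l-k}\frac{f^{(k+j)}(0)\xi^j}{j!}\Bigr|\le \frac{|\xi|^{l-k+1}}{(l-k+1)!}\sup_{|x|\le|\xi|}|f^{(l+1)}(x)|,
\]
which is all you need.

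\textbf{The deferred bookkeeping.} This closes in a few lines, so you need not lean on the citation for the $t$-split form of $\mathcal{R}_{l+1}$.
\begin{itemize}
\item On $\{|\xi|\le t\}$: combine $|\kappa_{k+1}|\le C_k\,\mathbb{E}|\xi|^{k+1}$ (moment--cumulant formula plus Lyapunov) with $\mathbb{E}|\xi|^{k+1}\,\mathbb{E}|\xi|^{l-k+1}\le \mathbb{E}|\xi|^{l+2}$.
\item On $\{|\xi|>t\}$: write $\mathbb{E}|\xi|^{k+1}\le t^{k+1}+\mathbb{E}[|\xi|^{k+1}I(|\xi|>t)]$. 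The first piece is absorbed because $t<|\xi|$ on that event. The second is handled by Chebyshev's association inequality for the comonotone functions $|x|^{k+1}$ and $|x|^{l-k+1}$ restricted to the event.
\end{itemize}
Together these give a bound $C_l\,\mathbb{E}[|\xi|^{l+2}I(|\xi|>t)]\sup_x|f^{(l+1)}(x)|$, so the constants in $\mathcal{R}_{l+1}$ depend only on $l$.
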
 
		
		\begin{lemma}[Gaussian Poincaré inequality]\label{lemma-pi} 
			Let $\mu$ be the standard Gaussian measure on $M=\mathbb{R}^n$. Assume $f:\mathbb{R}^n\to \mathbb{R}$ is $C^1$. Then, $\operatorname{Var}_{\mu}(f)\leq C\mathbb{E}_{\mu}|\nabla f|^2$ holds with $C=1$. In particular, let $\{\xi_i\}_{i=1}^n$ with standard normal distribution be independent, and assume the function $f:\mathbb{R}^n\to\mathbb{R}$ has first-order continuous partial derivatives. Then we have 
			\begin{align*} \operatorname{Var}[f(\xi_1,\dots,\xi_n)]\leq\mathbb{E}\sum_{i=1}^n\left[\frac{\partial f(\xi_1,\dots,\xi_n)}{\partial \xi_i}\right]^2. \end{align*} 
		\end{lemma}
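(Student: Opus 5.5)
The statement to prove is Lemma~\ref{lemma-pi}, the Gaussian Poincaré inequality $\operatorname{Var}[f(\xi)]\le \mathbb{E}|\nabla f(\xi)|^2$ for $\xi=(\xi_1,\dots,\xi_n)$ with i.i.d.\ standard normal entries. I would prove it with the Ornstein--Uhlenbeck interpolation, which fits the Stein's lemma framework of Lemma~\ref{lemma-cumulant}. Throughout, assume $f$ and $\nabla f$ are square integrable, since otherwise the right-hand side is infinite and there is nothing to prove.

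\textbf{Reduction to smooth functions.} I would first treat $f$ smooth with bounded derivatives. For a general $C^1$ function with $\mathbb{E}|\nabla f|^2<\infty$, I would approximate $f$ in the Gaussian Sobolev norm by such functions, using mollification together with a smooth cutoff. Both sides of the inequality pass to the limit. This approximation step is the only technical obstacle, and it is routine because the Gaussian density makes truncation errors controllable.

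\textbf{Interpolation and Stein's lemma.} Let $\eta$ be an independent copy of $\xi$ and set $X_t=e^{-t}\xi+\sqrt{1-e^{-2t}}\,\eta$. Define $P_tf(x)=\mathbb{E}f(e^{-t}x+\sqrt{1-e^{-2t}}\eta)$, so that $P_0f=f$ and $P_tf\to\mathbb{E}f$ as $t\to\infty$. Then
$$\operatorname{Var}f=-\int_0^\infty \frac{d}{dt}\mathbb{E}\big[(P_tf)^2(\xi)\big]\,dt.$$
Differentiating, with $L=\Delta-x\cdot\nabla$, gives $\frac{d}{dt}P_tf=LP_tf$. Stein's lemma \eqref{sseqce}, applied coordinatewise, yields the integration by parts $\mathbb{E}[gLh]=-\mathbb{E}[\nabla g\cdot\nabla h]$. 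Hence
$$\operatorname{Var}f=2\int_0^\infty \mathbb{E}|\nabla P_tf|^2\,dt.$$

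\textbf{Commutation bound.} Differentiating under the expectation gives $\nabla P_tf=e^{-t}P_t\nabla f$. By Jensen's inequality, $|P_t\nabla f|^2\le P_t|\nabla f|^2$, and $\mathbb{E}P_tg(\xi)=\mathbb{E}g(\xi)$ by invariance of the Gaussian measure. Therefore
$$\operatorname{Var}f\le 2\int_0^\infty e^{-2t}\,dt\;\mathbb{E}|\nabla f|^2=\mathbb{E}|\nabla f|^2,$$
which gives the constant $C=1$.

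An alternative route, which I would fall back on if the semigroup route felt heavy, is tensorization. The Efron--Stein inequality reduces the claim to $n=1$. The one-dimensional case then follows from the central limit theorem applied to Rademacher sums, using the two-point inequality $\operatorname{Var}g(\epsilon)\le \tfrac14\mathbb{E}|g(\epsilon)-g(-\epsilon)|^2$ for a random sign $\epsilon$.
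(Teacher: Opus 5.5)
The paper does not prove this lemma. It states it as a classical tool and uses it only to bound variances of normalized resolvent traces such as $n^{-1}\operatorname{tr}\Phi(z)$. Your Ornstein--Uhlenbeck argument is a correct, self-contained proof that the paper does not attempt. Each step checks out and together they give exactly $C=1$:
\begin{itemize}
\item the representation $\operatorname{Var} f=2\int_0^\infty \mathbb{E}|\nabla P_tf|^2\,dt$;
\item the integration by parts $\mathbb{E}[g\,Lh]=-\mathbb{E}[\nabla g\cdot\nabla h]$, obtained coordinatewise from Stein's lemma;
\item the commutation $\nabla P_tf=e^{-t}P_t\nabla f$;
\item Jensen's inequality for the Markov operator $P_t$;
\item invariance of $\mu$ under $P_t$.
\end{itemize}
Compared with simply quoting the result, your route costs some regularity bookkeeping. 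In exchange, it rests on the same Stein identity as Lemma~\ref{lemma-cumulant}, so both Gaussian tools used in the paper come from one mechanism. Your Efron--Stein plus Rademacher-CLT fallback is also valid and avoids semigroups. Its price is a Taylor step that needs $C^2$ test functions with bounded second derivatives, plus an extra limiting argument.

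One sentence in your reduction needs repair. The remark ``otherwise the right-hand side is infinite'' does not cover the case $\mathbb{E}|\nabla f|^2<\infty$ but $f\notin L^2(\mu)$. There the left side would be infinite or undefined while the right side is finite. Your approximation in the Gaussian Sobolev norm also presupposes $f\in L^2(\mu)$.

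You can rule this case out with the inequality you have already proved. Apply it to $\psi_M\circ f$, where $\psi_M$ is smooth and bounded, with $|\psi_M'|\le 1$, $\psi_M(s)=s$ on $[-M,M]$, and $|\psi_M(s)|\le |s|$. These functions are bounded and $C^1$, and $|\nabla(\psi_M\circ f)|\le|\nabla f|$, so their variances are bounded by $\mathbb{E}|\nabla f|^2$ uniformly in $M$. Their means stay bounded, by comparing Chebyshev's inequality with $\mathbb{P}(|f|\le K)\ge 3/4$ for a suitable $K$. Fatou's lemma then gives $f\in L^2(\mu)$, and dominated convergence passes the inequality to the limit $M\to\infty$.
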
 
		
		\subsection{Asymptotic Results for the Squared Alignment}
		 Under the model $\bbX=\bLambda\bbY+\bGamma\bbW$, the finite-rank population correlation structure is encoded in $\bLambda$. Since the population and sample canonical directions are defined only up to sign and are normalized to have unit norm, we study the squared inner product between them. The squared inner product $\langle \bbu_i,\bbnu_i\rangle^2$ is the squared cosine of the angle between the population and sample canonical directions. The first-order approximation of the squared cosine of this angle is given below. 
		 
		 \begin{theorem}[First-order limit]\label{th1} 
		 	Under Assumptions \ref{assumption1}-\ref{assumption2}, fix $1\leq i\leq k$ such that $r_i$ is a simple spike. Then 
			\begin{align} &\langle \bbu_i, \bbnu_i\rangle ^2 \xrightarrow{p}\frac{1}{1+d(r_i)} , 
			\end{align} 
			where $d(r_i)=D_{1}(r_i)+D_{2}(r_i),$ and 
			\begin{align*} 
			D_1(r_i)
			=&(1-r_i)\left(
			\frac{2(\eta r_i-\omega)(1-r_i)-r_i}{r_i}F_1(r_i)\right.\\
			&\left.
			+\frac{r_i(\eta r_i-\omega)(1-r_i)-(\eta r_i-\omega)^2(1-r_i)^2}{r_i^2}F_2(r_i)
			\right),\notag\\ 
			D_2(r_i)
			=&
			\frac{r_i(\eta r_i-\omega)^2(1-r_i)^2}{c_2r_i^2}
			\left(
			\left(
			\frac{c_1r_i+\omega(1-r_i)}
			{r_i-(\eta r_i-\omega)(1-r_i)}
			-\frac{\omega}{\eta r_i-\omega}
			\right)F_1(r_i)\right.\\
			&\left.
			+\left(
			c_1+c_2-\frac{2\omega(1-r_i)}{r_i}
			\right)F_2(r_i)
			\right),\\ 
			F_1(r_i)=&\frac{-c_1r_i((1-c_2)r_i+c_2)}{r_i(\eta r_i-\omega)-(\eta r_i-\omega)^2(1-r_i)},\\ 
			F_2(r_i)=&\frac{
					r_i^2
					\left(
					\omega(1-c_2)r_i\big((1-c_1)r_i+c_1\big)
					+
					c_1(\eta r_i-\omega)(\eta r_i^2-\omega)
					\right)
				}
				{
					\big((1-c_1)r_i+c_1\big)^2
					\big((1-c_2)r_i+c_2\big)
					(\eta r_i-\omega)^2
					(\eta r_i^2-\omega)},  
			\end{align*}
			with $\eta=(1-c_1)(1-c_2)$ and $\omega=c_1c_2$. 
			\end{theorem}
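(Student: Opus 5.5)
We follow the reduction sketched in the introduction. First we normalize the model. Since $\bbX=\bLambda\bbY+\bGamma\bbW$ with $\bLambda\bLambda^\top+\bGamma\bGamma^\top=\bbI_p$, we may rotate coordinates (orthogonal changes of basis in $\mathbb{R}^p$ and $\mathbb{R}^q$ leave both the Gaussian law and $\langle\bbu_i,\bbnu_i\rangle^2$ invariant) so that $\bLambda=\sum_{j\le k}\sqrt{r_j}\,\bbe_j\bbe_j^\top$ in block form. Then $\bbu_i=\bbe_i$ and $\bSigma_{xx}=\bbI_p$. Write $\bbX=\bbW_0+\bbP$, where $\bbW_0$ is the null (independent) part and $\bbP$ is a rank-$k$ perturbation built from the first $k$ rows of $\bbY$ and the corresponding rows of $\bbW$.

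The sample canonical direction $\bbnu_i$ is an eigenvector of $\bbS_{xx}^{-1}\bbS_{xy}\bbS_{yy}^{-1}\bbS_{yx}$. Equivalently, up to normalization, $\bbS_{xx}^{1/2}\bbnu_i$ is an eigenvector of the symmetric matrix $\bbC=\bbS_{xx}^{-1/2}\bbS_{xy}\bbS_{yy}^{-1}\bbS_{yx}\bbS_{xx}^{-1/2}$. It is most convenient to work with the projection formulation $\bbX\bbP_Y\bbX^\top\bbnu=l\,\bbX\bbX^\top\bbnu$, where $\bbP_Y=\bbY^\top(\bbY\bbY^\top)^{-1}\bbY$. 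This is a generalized eigenproblem for the pencil $(\bbX\bbP_Y\bbX^\top,\bbX\bbX^\top)$.

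Next we apply the Schur complement. Split the rows of $\bbX$ into the $k$ signal coordinates and the $p-k$ noise coordinates. Eliminating the noise block shows that $l$ is an outlier exactly when a $k\times k$ random matrix $\bbM_n(l)$ is singular. This matrix is built from quadratic forms of the signal rows against resolvents $\mathcal{G}(z)=(\bbX_2(\bbP_Y-z)\bbX_2^\top)^{-1}$-type objects of the noise part, for $z$ outside $[d_-,d_+]$. The eigenvector then has an explicit form. Its signal coordinates lie in the kernel of $\bbM_n(l_i)$, and its noise coordinates equal a resolvent applied to the signal part. Consequently
\[
\langle\bbe_i,\bbnu_i\rangle^2=\frac{\nu_{i,(i)}^2}{\|\bbnu_i\|^2}=\frac{1}{1+\text{(noise-block energy)}/\nu_{i,(i)}^2},
\]
and the noise-block energy is a quadratic form built from $\mathcal{G}(l_i)$ and its derivative/square. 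This is the source of $d(r_i)$.

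We then reduce to deterministic equivalents. Conditioning on the noise-block data, the signal rows are Gaussian vectors independent of $\mathcal{G}$, so each quadratic form concentrates around $n^{-1}\tr$ of the relevant resolvent product. The traces $n^{-1}\tr\mathcal{G}(z)$ and $n^{-1}\tr\mathcal{G}(z)^2$ (equivalently their $z$-derivatives) converge to deterministic limits. We would compute these limits with Stein's lemma (Lemma \ref{lemma-cumulant}) via self-consistent equations tied to the Wachter law, and bound their variances by $O(n^{-2})$ with the Gaussian Poincaré inequality (Lemma \ref{lemma-pi}). These limits should be exactly $F_1$ and $F_2$ evaluated at $\gamma(r_i)$, expressed back in $r_i$ through \eqref{eq:3}.

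Since $l_i\to\gamma_i$ almost surely by Theorem \ref{th:1.4}, and the resolvents are Lipschitz in $z$ away from the bulk (with high probability no bulk eigenvalue exceeds $d_++\epsilon$), we may replace $l_i$ by $\gamma_i$. Simplicity of $r_i$ makes the kernel one-dimensional in the limit, with off-diagonal entries of order $n^{-1/2}$ and the other diagonal entries bounded away from zero. Hence $\nu_{i,(j)}/\nu_{i,(i)}\to0$ for $j\neq i$. Assembling the pieces gives $d(r_i)=D_1+D_2$: $D_1$ comes from the $\bbX$-normalization terms, and $D_2$ from the $\bbP_Y$-projection terms carrying the $1/c_2$ factor.

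The main obstacle is the explicit computation of the deterministic limits of the mixed resolvent traces. These involve both $\bbP_Y$ and $\bbX_2\bbX_2^\top$ (products like $\tr\,\mathcal{G}\bbX_2\bbP_Y\bbX_2^\top\mathcal{G}$), and each requires its own Stein-lemma closure. Simplifying the resulting expressions into the stated closed forms in $\eta,\omega$ is then algebraically heavy. To make this tractable, we would first derive the Stieltjes transform of the Wachter law at $z=\gamma_i$ in closed form in $r_i$. Every required trace would then be expressed through it and its derivative.
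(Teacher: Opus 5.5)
\textbf{Verdict: the overall route is the paper's, but the step that produces $D_2$ fails as stated.}

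Your skeleton matches the paper: rotating so that $\bbu_i=\bbe_i$, the pencil $\bbX(\bbP_y-l_i\bbI_n)\bbX^\top$, the Schur complement in signal/noise coordinates, the normalization $\langle\bbu_i,\bbnu_i\rangle^2=1/(1+\text{noise energy})$, Stein/Poincar\'e for the deterministic equivalents, and simplicity to control the other signal coordinates. The problem is your sentence ``conditioning on the noise-block data, the signal rows are Gaussian vectors independent of $\mathcal G$''.

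In rotated coordinates the signal rows are $\bLambda_k\bbV_1^\top\bbY+\bGamma_k\bbU_1^\top\bbW$. The two parts behave differently:
\begin{itemize}
\item The $\bbW$-part is fine. It is independent of $\mathcal G=\Phi(z)=(\bbU_2^\top\bbW(\bbP_y-z\bbI_n)\bbW^\top\bbU_2)^{-1}$ given $(\bbY,\bbU_2^\top\bbW)$. It gives the paper's $I_{k1}$, with kernel $\frac1n\tr[(\bbP_y-l_i\bbI_n)\bbW^\top\bbU_2\Phi^2\bbU_2^\top\bbW(\bbP_y-l_i\bbI_n)]$.
\item The $\bbY$-part is not independent of $\mathcal G$. $\Phi$ is built from $\bbP_y$, which depends on all of $\bbY$, including the rows $\bbV_1^\top\bbY$. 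Moreover these rows lie in the row space of $\bbY$, so $\bbV_1^\top\bbY(\bbP_y-l_i\bbI_n)=(1-l_i)\bbV_1^\top\bbY$.
\end{itemize}
If you treat these rows as fresh $N(0,n^{-1}\bbI_n)$ vectors, you recover the $I_{k1}$ kernel. That replaces $D_2$ by $r_iD_1/(1-r_i)$, which is not $D_2$. The correct approximation is $(1-l_i)^2r_i\,\frac1q\tr\bbY\bbW^\top\bbU_2\Phi^2\bbU_2^\top\bbW\bbY^\top$, and this is where the $1/c_2$ comes from. If you instead condition on all of $\bbY$, these rows are frozen and nothing concentrates. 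The same issue undermines your claims that the off-diagonal entries of the $k\times k$ Schur matrix are $O_p(n^{-1/2})$ and that its other diagonal limits are nonzero; the paper imports these from Bao et al.

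\textbf{The missing decoupling.} The paper writes $\bbY=\bbU_y\bLambda_y\bbV_y$ (SVD). Then $\bbP_y=\bbV_y^\top\bbV_y$ depends only on $\bbV_y$. For Gaussian $\bbY$, the left factor $\bbU_y$ is Haar on the $q\times q$ orthogonal group and independent of $(\bLambda_y,\bbV_y)$ and of $\bbW$. With $\mathcal K=(\bbP_y-l_i\bbI_n)\bbW^\top\bbU_2\Phi^2\bbU_2^\top\bbW(\bbP_y-l_i\bbI_n)$,
\[
\bbV_1^\top\bbY\mathcal K\bbY^\top\bbV_1=\bbV_1^\top\bbU_y\,\mathcal B\,\bbU_y^\top\bbV_1,\qquad \mathcal B=\bLambda_y\bbV_y\mathcal K\bbV_y^\top\bLambda_y .
\]
Here $\mathcal B$ is independent of the Haar rows $\bbV_1^\top\bbU_y$, so this equals $\frac1q\tr\mathcal B\,\bbI_k+O_p(n^{-1/2})$. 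A leave-$k$-rows-out or Woodbury treatment of $\bbP_y$ would be an alternative, but some decoupling device of this kind is indispensable. The same independence makes the cross terms conditionally centered.

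The resulting trace $\frac1q\tr\mathcal B=(1-l_i)^2\frac1q\tr\bbY\bbW^\top\bbU_2\Phi^2\bbU_2^\top\bbW\bbY^\top$ involves $\bbY^\top\bbY$, not $\bbP_y$. It needs its own Stein expansion in the entries of $\bbY$, and that is where the Wachter Stieltjes transform at $l_i$ enters. By contrast, your example mixed trace $\tr\mathcal G\bbX_2\bbP_y\bbX_2^\top\mathcal G=\tr\bbE\Phi^2$ already reduces to $n^{-1}\tr\Phi$ and its derivative.

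Relatedly, $\partial_z\mathcal G=\mathcal G\bbX_2\bbX_2^\top\mathcal G$, not $\mathcal G^2$. $F_2$ is the limit of $(n^{-1}\tr\Phi)'$, and $n^{-1}\tr\Phi^2$ is not needed at first order.
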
 
			
			Theorem \ref{th1} shows that the squared inner product has a deterministic limit, which implies that the sample spiked eigenvector $\bbnu_i$ is asymptotically inconsistent, lying on a cone around the true population spiked eigenvector $\bbu_i$. Similar behavior has been observed in several spiked random matrix models, including spiked sample covariance matrices \cite{2007paul, bao2022statistical, pu2024asymptotic}, deformed Wigner matrices \cite{fan2022asymptotic}, and information-plus-noise matrices \cite{2020baosingular, liu2023asymptotic}. We next study the fluctuations of the squared alignment around this first-order limit. This leads to the following central limit theorem. 
			
			\begin{theorem}[Central limit theorem]\label{th2}
				Under the same assumptions as in Theorem \ref{th1}, for the same simple spike $r_i$, the following convergence in distribution holds: \begin{align}
					&\sqrt{n}\left( \langle \bbu_i, \bbnu_i\rangle ^2-\frac{1}{1+d(r_i)}\right) \xrightarrow{D}\mathcal{N}(0,\sigma^2(r_i)/(1+d(r_i))^4), \end{align} 
					where 
					
					\begin{align}
						\sigma^2(r_i)=&2(1-r_i)^2P_1(r_i)-4\frac{\eta r_i-\omega}{r_i}(1-r_i)^3(P_1(r_i)+F_3(r_i))\\\notag &+\frac{2(\eta r_i-\omega)^2}{r_i^2}(1-r_i)^4(P_1(r_i)+2F_3(r_i)+Q_3(r_i))\\ 
						&+\frac{2(\eta r_i-\omega)^4(1-r_i)^4}{r_i^2}\left( J_{k1}(r_i)+J_{k2}(r_i)+J_{k3}(r_i)+J_{k4}(r_i)+J_{k5}(r_i)\right)\notag\\ 
						&+\frac{4(\eta r_i-\omega)^3(1-r_i)^4Q_4(r_i)}{c_2r_i^2}+\frac{4c_1(\eta r_i-\omega)^2(1-r_i)^3Q_7(r_i)}{c_2r_i}.\label{sigmari2}
						\end{align} 
					The deterministic quantities $P_1(r_i)$, $F_3(r_i)$, $Q_3(r_i)$, $J_{k\ell}(r_i)$, $\ell=1,\ldots,5$, $Q_4(r_i)$, and $Q_7(r_i)$ are defined in \eqref{l31}, \eqref{EPhi3}, \eqref{Q3}, \eqref{Jk1} -- \eqref{Jk5}, \eqref{Q4}, and \eqref{Q7}, respectively.
				\end{theorem}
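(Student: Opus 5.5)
The plan is to obtain the CLT by a first-order (delta-method) linearization of the exact finite-dimensional representation that underlies Theorem \ref{th1}.

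\textbf{Step 1: exact representation.} Write $\bbX=\bLambda\bbY+\bGamma\bbW$ and apply the Schur complement to the characteristic equation of $\bbS_{xx}^{-1}\bbS_{xy}\bbS_{yy}^{-1}\bbS_{yx}$. Since $\bLambda$ has rank $k$, the outlier $l_i$ becomes a root of a $k\times k$ determinantal equation $\det\mathcal{M}_n(l)=0$. The entries of $\mathcal{M}_n(z)$ are quadratic forms in the low-rank directions, built from resolvents of the null (noise) CCA matrix formed from $\bbW$ and $\bbY$. On the same route, the eigenvector equation gives the squared alignment as a ratio
\begin{align*}
\langle \bbu_i,\bbnu_i\rangle^2=\frac{1}{1+\mathcal{D}_n(l_i)},
\end{align*}
where $\mathcal{D}_n(z)$ is an explicit combination of finitely many resolvent trace/quadratic-form functionals. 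Examples are $\frac1n\tr \mathcal{G}(z)$, $\frac1n\tr\mathcal{G}(z)^2$, and mixed forms $\bbu_i^\top\cdots\bbu_i$, evaluated at the spectral parameter $z=l_i$. These are exactly the random counterparts whose limits define $F_1,F_2$ and $D_1,D_2$. Theorem \ref{th1} is then the statement $\mathcal{D}_n(l_i)\to d(r_i)$; I take that as established (it uses Theorem \ref{th:1.4} and the deterministic equivalents).

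\textbf{Step 2: linearize around $\gamma_i$.} Decompose
\begin{align*}
\mathcal{D}_n(l_i)-d(r_i)=[\mathcal{D}_n(\gamma_i)-\bar{\mathcal{D}}_n(\gamma_i)]+[\mathcal{D}_n(l_i)-\mathcal{D}_n(\gamma_i)]+[\bar{\mathcal{D}}_n(\gamma_i)-d(r_i)],
\end{align*}
where $\bar{\mathcal{D}}_n$ is the finite-$n$ deterministic equivalent. Three estimates are needed:
\begin{itemize}
\item The deterministic bias term is $o(n^{-1/2})$; this follows from the Stein-lemma expansions of Lemma \ref{lemma-cumulant}, which give $O(n^{-1})$ errors for the expectations.
\item The spectral-parameter term equals $\mathcal{D}_n'(\gamma_i)(l_i-\gamma_i)+O_p(n^{-1})$. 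Here $\gamma_i>d_+$ is isolated, so the resolvents are smooth there, and $l_i-\gamma_i=O_p(n^{-1/2})$. Moreover, $l_i-\gamma_i$ is itself a linear functional of the fluctuations of $\mathcal{M}_n(\gamma_i)$, obtained by implicit differentiation of $\det\mathcal{M}_n(l_i)=0$ around the simple root.
\item The Poincaré inequality (Lemma \ref{lemma-pi}) shows that every functional has variance $O(n^{-1})$. This justifies the $\sqrt n$ scale and lets us discard products of fluctuations.
\end{itemize}
Combining these, $\sqrt n(\mathcal{D}_n(l_i)-d(r_i))$ is a fixed linear combination $\sum_j a_j\sqrt n(T_j-\mathbb{E}T_j)$ of finitely many centered resolvent functionals $T_j$ at $z=\gamma_i$.

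\textbf{Step 3: joint CLT and variance.} I would prove joint Gaussianity of $\{\sqrt n(T_j-\mathbb{E}T_j)\}$ by the characteristic-function method. Set $\phi(t)=\mathbb{E}e^{\mathrm{i}t\sum a_jT_j}$ and use Stein's lemma (\ref{sseqce}) on the Gaussian entries of $\bbW,\bbY$ to show $\phi'(t)=-t\,\Sigma\,\phi(t)+o(1)$. This yields normality with variance $\Sigma$. The quadratic forms in $\bbu_i$ involve Gaussian vectors independent of the bulk resolvent after the Schur reduction, which gives the factor $2$ appearing in each term of $\sigma^2(r_i)$. Finally the delta method applied to $x\mapsto 1/(1+x)$ gives the derivative factor $(1+d(r_i))^{-2}$, hence variance $\sigma^2(r_i)/(1+d(r_i))^4$.

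\textbf{Main obstacle.} The hardest part is computing the limiting covariances among the $T_j$. These include second-order traces such as $\frac1n\tr\mathcal{G}^2$, $\frac1n\tr\mathcal{G}\bbA\mathcal{G}\bbB$, and their interaction with the fluctuation of $l_i$; they are the sources of $P_1$, $F_3$, $Q_3$, $J_{k\ell}$, $Q_4$, and $Q_7$. Each requires closing a system of self-consistent equations for products of two CCA-type resolvents, which are not simple sample-covariance resolvents. I would first derive deterministic equivalents for $\mathbb{E}\frac1n\tr(\mathcal{G}(z_1)\mathcal{G}(z_2))$ by Stein expansions, and then obtain the remaining quantities by differentiation in $z$ and specialization to $z_1=z_2=\gamma_i$.
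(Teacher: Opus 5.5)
Your architecture matches the paper's. You reduce via the Schur complement to $\langle\bbu_i,\bbnu_i\rangle^2[\bbM_i]_{ii}=1+O_p(n^{-1})$, linearize at the $\sqrt n$ scale, prove joint Gaussianity, and apply the delta method to $x\mapsto 1/(1+x)$. Two small corrections: the representation is exact only for $k=1$, since for $k>1$ the off-$i$ components of $\bbU_1^\top\bbnu_i$ cost $O_p(n^{-1})$; and your Stein/characteristic-function argument is an acceptable substitute for the Bai--Yao sesquilinear-form CLT that the paper uses.

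The gap is the variance, which is the real content of the statement, and there your plan and the stated $\sigma^2(r_i)$ do not match. In the paper, $\sigma^2(r_i)=A_1+A_2+A_3$ is by construction a sum of three conditional variances. These are taken given $(\bbY,\bbU_2^\top\bbW)$, after replacing the left singular vectors of $\bbY$ by an independent Haar matrix:
\begin{itemize}
\item the Gaussian quadratic form $\sqrt n[\mathcal V_{k1}]_{ii}$ in the $i$-th row of $\bbU_1^\top\bbW$, which produces $P_1,F_3,Q_3$;
\item the quadratic form $\sqrt n[\mathcal V_{k2}]_{ii}$ in the Haar-rotated $\bbY$-direction, which produces the $J_{k\ell}$;
\item the bilinear cross term $\sqrt n[I_{k3}+I_{k4}]_{ii}$, which produces $Q_4,Q_7$ with factor $4$, not the quadratic-form factor $2$ you cite.
\end{itemize}
Every prefactor $1-l_i$ is replaced by $1-\gamma_i=(\eta r_i-\omega)(1-r_i)/r_i$, and all cross-covariances are declared $o(1)$ by conditional centering.

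Your Step 2, by contrast, keeps the term $\mathcal D_n'(\gamma_i)\sqrt n(l_i-\gamma_i)$. By \eqref{eq:Mlambda-expansion}, $l_i-\gamma_i\approx n^{-1/2}\mathcal R_{ii}/\Delta_{ii}$ has a nondegenerate Gaussian limit. It is built from the same Gaussian entries as the other fluctuations, in particular the same row of $\bbU_1^\top\bbW$. So it contributes its own variance plus cross-covariances, none of which appear in \eqref{sigmari2}. Likewise, your generic $\Sigma$ for correlated $T_j$ would contain cross terms that the formula lacks. Executed faithfully, your plan does not land on the stated $\sigma^2(r_i)$ unless you prove these extra contributions cancel. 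The proposal neither notices this mismatch nor supplies a cancellation, and saying the covariance computation is ``the source of'' $P_1,\dots,Q_7$ is not a derivation.

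The paper reaches its formula only through the assertion in Lemma~\ref{ccalemma2} that $G_{i,n}=X_{1n}+X_{2n}+X_{3n}+o_p(1)$. That assertion says the conditional means, evaluated at the random point $l_i$, equal $1+d(r_i)+o_p(n^{-1/2})$. Yet $l_i$ is not measurable with respect to the conditioning $\sigma$-field, although the paper treats it as fixed. Lemma~\ref{ccalemma1} gives only $O_p(n^{-1/2})$ for that difference. There is also no evident reason for the deterministic limit of $[\bbM_i(z)]_{ii}$ to be stationary at $z=\gamma_i$. Your explicit spectral-parameter term is exactly what that step discards.

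So the ingredient missing from your proposal, if you want the theorem as stated, is one of two things. Either prove that $\frac{d}{dz}$ of the limit of $[\bbM_i(z)]_{ii}$ vanishes at $\gamma_i$, or prove that the variance and covariances of $\mathcal D'(\gamma_i)\sqrt n(l_i-\gamma_i)$ cancel exactly against the other terms. Without this, your more careful bookkeeping would give a variance different from $\sigma^2(r_i)$, and you would need to reconcile the two before claiming the stated CLT.
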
 
				
				\begin{remark} 
					To the best of our knowledge, Theorem \ref{th2} is the first central limit theorem for the squared alignment $\langle \bbu_i,\bbnu_i\rangle^2$ in high-dimensional CCA with finite-rank population spikes. This result is proved under Assumptions \ref{assumption1}--\ref{assumption2}, for a fixed number of spikes and for a simple spike in the separated, supercritical regime. Critical or near-critical spikes, multiple spikes with the same limiting location, and possible interactions among several spikes are not considered here.
				\end{remark} 
			
			\section{Estimation of the Asymptotic Mean and Variance of the Squared Alignment}\label{sec:estimation}
			In the previous section, we show that, for each simple spike $r_i$, the squared alignment $$\langle \bbu_i,\bbnu_i\rangle^2$$ does not converge to one. Instead, it converges to a nontrivial limit that is strictly smaller than one. Moreover, the fluctuation around this limit occurs on the $n^{-1/2}$ scale and admits an asymptotically Gaussian description. Thus, under the high-dimensional regime considered in this paper, the sample canonical direction is not a classically consistent estimator of the population canonical direction. Instead, its squared alignment with the population direction has a nontrivial limiting value.
			
			This suggests that, in the present setting, the relevant question is not whether the sample canonical direction fully recovers the population one, but how large the limiting squared alignment is and how it fluctuates around its limit. Theorem \ref{th1} identifies the asymptotic center of this alignment, while Theorem \ref{th2} characterizes its second-order fluctuation. However, both quantities depend on the population spike $r_i$, and are therefore not directly computable from data. The purpose of this section is to construct plug-in estimators for these quantities based on the sample spiked eigenvalue $l_i$, thereby turning the asymptotic results of Theorems \ref{th1} and \ref{th2} into quantities that can be evaluated from the observed sample. In this sense, eigenvalue separation and directional recovery represent two different aspects of the problem. A sample spiked eigenvalue indicates that a signal direction is detectable from the spectrum, whereas the squared alignment quantifies how accurately the associated sample direction represents its population counterpart. The estimators constructed below are designed to assess this directional recovery.
			
			\subsection{Estimation of the Asymptotic Mean}
			By Theorem \ref{th:1.4}, for each sample spiked eigenvalue associated with a population spike $r_i$, we have
			$$l_i-\gamma_i\xrightarrow{a.s.}0, ~~\gamma_i=\gamma(r_i),$$ where $\gamma(\cdot)$ is the deterministic mapping from the population spike $r_i$ to its associated outlier location. On the other hand, Theorem \ref{th1} shows that
			$$\langle \bbu_i, \bbnu_i\rangle ^2 \xrightarrow{p} \mu_i, ~~\mu_i:=\frac{1}{1+d(r_i)}.$$
			Accordingly, $\mu_i$ describes the limiting mean of the squared alignment between the sample and population canonical directions. Since $\mu_i$ depends on the unknown spike $r_i$, a natural strategy is to estimate $r_i$ from the observed spiked eigenvalue $l_i$, and then substitute the resulting estimator into the expression for $\mu_i$.
			
			\begin{proposition}\label{pro1}
				Suppose the assumptions of Theorem \ref{th:1.4} and \ref{th1} hold, and fix a simple spike $r_i$. Assume that the function $\gamma(\cdot)$ is locally one-to-one in a neighborhood of $r_i$, and that $d(\cdot)$ is continuous at $r_i$. Denote by $\gamma^{-1}(\cdot)$ the corresponding local inverse on a neighborhood of $\gamma(r_i)$. Define $$\hat{r}_i:=\gamma^{-1}(l_i),~~\hat{\mu}_i:=\dfrac{1}{1+d(\hat{r}_i)}.$$
				Then
				$$\hat{r}_i \xrightarrow{p} r_i,~~\hat{\mu}_i \xrightarrow{p} \mu_i.$$
			\end{proposition}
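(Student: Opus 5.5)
The argument is a continuous-mapping argument built on Theorem \ref{th:1.4} and Theorem \ref{th1}. The eigenvalue statement gives $l_i-\gamma_i\xrightarrow{a.s.}0$, so $l_i\xrightarrow{p}\gamma(r_i)$. The task is to transfer this convergence through the local inverse $\gamma^{-1}$ and then through $r\mapsto 1/(1+d(r))$.

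\emph{Step 1: continuity of the local inverse.} Let $U$ be an open interval around $r_i$ on which $\gamma$ is one-to-one. From \eqref{eq:3}, $\gamma$ is continuous; in fact it is a rational function of $r$ that is smooth on $(0,1)$. A continuous injective function on an interval is strictly monotone. Hence $\gamma(U)$ is an open interval $V$ containing $\gamma(r_i)$, and $\gamma^{-1}:V\to U$ is continuous, since the inverse of a strictly monotone continuous map on an interval is continuous. We may shrink $U$ to a compact subinterval $[r_i-\delta_0,r_i+\delta_0]$ to keep everything explicit.

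\emph{Step 2: $\hat r_i\xrightarrow{p} r_i$.} The estimator $\hat r_i$ is well defined only on the event $\{l_i\in V\}$. Off this event, set $\hat r_i$ arbitrarily, for example $\hat r_i:=r_i$. Fix $\varepsilon>0$ with $[r_i-\varepsilon,r_i+\varepsilon]\subset U$. By continuity of $\gamma^{-1}$ at $\gamma(r_i)$, there is $\delta>0$ with $(\gamma(r_i)-\delta,\gamma(r_i)+\delta)\subset V$ such that $|l-\gamma(r_i)|<\delta$ implies $|\gamma^{-1}(l)-r_i|<\varepsilon$. Therefore
\[
\mathbb{P}(|\hat r_i-r_i|\ge\varepsilon)\le \mathbb{P}(|l_i-\gamma(r_i)|\ge\delta)\to0 .
\]
The convergence to zero uses Theorem \ref{th:1.4}, since $\gamma_i=\gamma(r_i)$ does not depend on $n$; this is the step where the deterministic outlier location is used. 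The same argument gives almost sure convergence if desired.

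\emph{Step 3: $\hat\mu_i\xrightarrow{p}\mu_i$.} Let $g(r)=1/(1+d(r))$. By hypothesis $d$ is continuous at $r_i$. Also $1+d(r_i)\neq0$, because $\mu_i$ is the limit in probability in Theorem \ref{th1} of a quantity in $[0,1]$ and is therefore finite. Hence $g$ is continuous at $r_i$. The continuous mapping theorem at a point (the same $\varepsilon$–$\delta$ argument as in Step 2) then yields $\hat\mu_i=g(\hat r_i)\xrightarrow{p}g(r_i)=\mu_i$.

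\emph{Expected obstacle.} The only real subtlety is well-definedness: $l_i$ may fall outside the range of the local inverse, and $1+d$ could vanish near $r_i$. Both are handled by restricting to events whose probability tends to one, namely $\{l_i\in V\}$ and $\{|\hat r_i-r_i|<\varepsilon'\}$ with $\varepsilon'$ chosen so that $1+d\ne0$ on the corresponding neighborhood. The value assigned off these events does not affect convergence in probability. I would also remark that monotonicity of $\gamma$ on $(r_c,1)$ can be checked directly: $\gamma'(r)=\eta-\omega/r^2$, which is positive for $r>r_c=\sqrt{\omega/\eta}$. So the local injectivity assumption in fact holds automatically.
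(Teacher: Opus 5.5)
Your proposal is correct and follows the same route as the paper. Continuity of the local inverse at $\gamma(r_i)$ gives $\mathbb{P}(|\hat r_i-r_i|\ge\varepsilon)\le\mathbb{P}(|l_i-\gamma(r_i)|\ge\delta)\to0$ by Theorem~\ref{th:1.4}, and the continuous mapping theorem applied to $r\mapsto 1/(1+d(r))$ then yields $\hat\mu_i\xrightarrow{p}\mu_i$. Your additions are sound refinements of points the paper leaves implicit: strict monotonicity to justify continuity of $\gamma^{-1}$, the treatment of the event $\{l_i\notin V\}$, and the check that $\gamma'(r)=\eta-\omega/r^2>0$ for $r>r_c$, which makes local injectivity automatic under Assumption~\ref{assumption2}.
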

			
			The proof is postponed to Section~\ref{proofpro}.
			
			The construction in Proposition \ref{pro1} can be summarized as the following plug-in procedure.

			\begin{remark}
				The role of Proposition \ref{pro1} is not to estimate the population direction $\bbu_i$ itself, but to estimate the limiting mean $\mu_i$ appearing in Theorem \ref{th1}. Since $\mu_i$ is precisely the limit of the squared alignment $\langle \bbu_i, \bbnu_i\rangle^2$, it may be interpreted as the limiting mean of the squared alignment between the sample and population directions. Proposition \ref{pro1} shows that this quantity can be consistently estimated from the observed sample spiked eigenvalue. Thus, $\hat{\mu}_i$ provides a sample-based measure of recovery quality for a direction that is itself not consistently estimable.
			\end{remark}
			\begin{table}[H]
				\centering
				\caption{Plug-in procedure for estimating the limiting mean $\mu_i$.}
				\label{tab:algorithm_mu}
				\begin{tabular}{p{0.12\textwidth} p{0.78\textwidth}}
					\toprule
					Step & Procedure \\
					\midrule
					1 & 
					Given the centered data matrices $\bbX\in\mathbb{R}^{p\times n}$ and $\bbY\in\mathbb{R}^{q\times n}$, form the sample canonical correlation matrix
					$$
					\bbS_{xx}^{-1}\bbS_{xy}\bbS_{yy}^{-1}\bbS_{yx}.
					$$
					Compute its nonzero eigenvalues $l_1\geq l_2\geq\cdots\geq l_q$. \\[1.2em]
					
					2 &
					Identify the sample spiked eigenvalues. In practice, retain those $l_i$ satisfying
					$$
					l_i>d_+,
					$$
					or, more conservatively, $l_i>d_+ + \varepsilon_n$, where $\varepsilon_n \downarrow 0$ is a deterministic sequence chosen to separate sample outliers from the bulk edge. \\[1.2em]
					
					3 &
					For each retained sample spiked eigenvalue $l_i$, estimate the corresponding population spike by
					$$
					\hat r_i=\gamma^{-1}(l_i)=\dfrac{2c_1c_2-c_1-c_2+l_i+\sqrt{(l_i-d_-)(l_i-d_+)}}{2(c_1c_2-c_1-c_2+1)}.
					$$ \\[1.2em]
					
					4 &
					Compute the plug-in estimator of the limiting mean of the squared alignment:
					$$
					\hat\mu_i=\dfrac{1}{1+d(\hat r_i)}.
					$$
					\\
					\bottomrule
				\end{tabular}
			\end{table}
			\subsection{Estimation of the Asymptotic Variance}
			The limiting mean alone does not fully describe the behavior of the squared alignment. Theorem \ref{th2} further shows that
			$$\sqrt{n}\left( \langle \bbu_i, \bbnu_i\rangle ^2-\frac{1}{1+d(r_i)}\right) \xrightarrow{D}\mathcal{N}(0,\sigma^2(r_i)/(1+d(r_i))^4).$$
			Thus the corresponding asymptotic variance is
			$$\tau_i^2:=\dfrac{\sigma^2(r_i)}{(1+d(r_i))^4}.$$
		    This quantity measures the fluctuation scale of the squared alignment around its limiting mean. As in the previous subsection, we estimate $\tau_i^2$ by substituting $\hat{r}_i$ into the deterministic expression above.
			
			\begin{proposition}\label{pro2}
				Under the assumptions of Proposition \ref{pro1}, further assume that $\sigma^2(\cdot)$ is continuous at $r_i$, and that $0<\sigma^2(r_i)<\infty$.
				Define
				$$\hat{\tau}_i^2:=\dfrac{\sigma^2(\hat{r}_i)}{(1+d(\hat{r}_i))^4},~~\hat{\tau}_i:=\sqrt{\hat{\tau}_i^2},$$ on the event where $\hat{\tau}_i^2\geq 0$.
				Then
				$$\hat{\tau}_i^2\xrightarrow{p}\tau_i^2,~~\hat{\tau}_i\xrightarrow{p}\tau_i.$$
			\end{proposition}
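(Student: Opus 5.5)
The argument is a continuous-mapping argument built on Proposition \ref{pro1}. That proposition already gives $\hat r_i=\gamma^{-1}(l_i)\xrightarrow{p} r_i$, so all that remains is to show the map
$$g(r):=\frac{\sigma^2(r)}{(1+d(r))^4}$$
is continuous at $r_i$. Then the continuous mapping theorem for convergence in probability to a constant gives $\hat\tau_i^2=g(\hat r_i)\xrightarrow{p} g(r_i)=\tau_i^2$.

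\textbf{Continuity of $g$ at $r_i$.} By hypothesis, $\sigma^2(\cdot)$ and $d(\cdot)$ are continuous at $r_i$. The denominator needs a separate check: we must show $1+d(r_i)\neq 0$. By Theorem \ref{th1}, $\langle\bbu_i,\bbnu_i\rangle^2\xrightarrow{p}1/(1+d(r_i))$, and the left-hand side lies in $[0,1]$, so the limit $1/(1+d(r_i))$ is a finite number in $[0,1]$. Implicit in the statement of Theorem \ref{th1} is that this limit is well defined, so $1+d(r_i)$ is finite and nonzero. In fact $1+d(r_i)\ge 1$, because $1/(1+d(r_i))\in(0,1]$. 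Hence $(1+d(r))^{-4}$ is continuous at $r_i$, and the product with $\sigma^2$ is continuous there.

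\textbf{Convergence of $\hat\tau_i^2$.} Fix $\varepsilon>0$ and choose $\delta>0$ such that $|r-r_i|<\delta$ implies three things: $r$ lies in the domain of the local inverse, $1+d(r)>1/2$, and $|g(r)-g(r_i)|<\varepsilon$. Then
$$\mathbb{P}(|\hat\tau_i^2-\tau_i^2|\ge\varepsilon)\le \mathbb{P}(|\hat r_i-r_i|\ge\delta)\to0.$$
Also, on $\{|\hat r_i-r_i|<\delta\}$ we have $\hat\tau_i^2>\tau_i^2-\varepsilon$. Taking $\varepsilon<\tau_i^2$, which is possible since $\sigma^2(r_i)>0$, gives $\hat\tau_i^2>0$. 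So the event $\{\hat\tau_i^2\ge0\}$ on which $\hat\tau_i$ is defined has probability tending to one, and the definition of $\hat\tau_i$ causes no asymptotic issue.

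\textbf{Convergence of $\hat\tau_i$.} Since $x\mapsto\sqrt{x}$ is continuous on $[0,\infty)$ and $\tau_i^2>0$, a second application of the continuous mapping theorem gives $\hat\tau_i\xrightarrow{p}\tau_i$.

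The only step needing real care is the nondegeneracy of the denominator $1+d(r_i)$ and the positivity of $\sigma^2(r_i)$. The latter is assumed. The former I would take from Theorem \ref{th1} as above, or alternatively verify directly from the explicit formula for $d$ in the supercritical range $r_i>r_c$. Everything else is routine.
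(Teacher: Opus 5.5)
Your proposal is correct and follows essentially the same continuous-mapping argument as the paper: $\hat r_i\xrightarrow{p} r_i$ from Proposition~\ref{pro1}, continuity of $\sigma^2(\cdot)$ and $d(\cdot)$ at $r_i$ together with $1+d(r_i)\neq 0$, and then continuity of $x\mapsto\sqrt{x}$ on $(0,\infty)$ using $\sigma^2(r_i)>0$. The only difference is that you justify $1+d(r_i)\neq 0$ (via Theorem~\ref{th1}; alternatively, $[\bbM_i]_{ii}\ge 1$ because $\bbC^\top\bbD^{-2}\bbC$ is positive semidefinite) and check that $\{\hat\tau_i^2\ge 0\}$ has probability tending to one, whereas the paper asserts the first point and leaves the second implicit.
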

			
			The proof is postponed to Section~\ref{proofpro}.
			
			\begin{remark}
				Proposition \ref{pro2} shows that the asymptotic variance appearing in Theorem \ref{th2} is likewise estimable from the sample. Consequently, one can estimate not only the limiting mean of the squared alignment itself, but also the scale of its random fluctuation. In this sense, Propositions \ref{pro1} and \ref{pro2} provide complementary information: $\hat{\mu}_i$ estimates the limiting mean of the squared alignment, whereas $\hat{\tau}_i$ estimates its fluctuation scale.
			\end{remark}
			
			\subsection{A Real-Data Illustration}
			
			Since the population canonical directions are unobservable in real data, the quantity
			$$\langle \bbu_i,\bbnu_i\rangle^2$$
			cannot be evaluated directly. Accordingly, the purpose of this subsection is not to validate the population-level limit in Theorem \ref{th1}, but rather to illustrate how the plug-in estimator proposed in Proposition \ref{pro1} can be used to assess the quality of the leading sample canonical directions in practice. The real-data illustration focuses on the first-order estimator $\hat{\mu}_i$, since the main purpose is to assess the recovery quality of the detected sample direction.
			
			We consider the limestone grassland community data originally reported in Table A-2 of Gittins \cite{Gittins1985}. The data contain $n=45$ sampling sites from a limestone grassland community in Anglesey, North Wales, and include two blocks of variables: an 8-dimensional block of plant species abundances and a 6-dimensional block of soil characteristics. The same dataset was also used in Bao et al. \cite{bao2019canonical} as a real-data example for high-dimensional canonical correlation analysis. Following the standard CCA preprocessing used there, we first center all variables across samples and then form the sample canonical correlation matrix from the centered data.
			
			Let $\bbX$ and $\bbY$ denote the centered species and soil data matrices, with dimensions $p\times n$ and $q\times n$, respectively. We then form the sample canonical correlation matrix
			$$\bbS_{xx}^{-1}\bbS_{xy}\bbS_{yy}^{-1}\bbS_{yx},$$
			and compute its leading eigenvalues $l_i$. The six sample squared canonical correlations computed from the centered Table A-2 data of Gittins \cite{Gittins1985} are
			$$l_1=0.8293,~~l_2=0.5198,~~l_3=0.3589,~~l_4=0.1074,~~l_5=0.0938,~~l_6=0.0378.$$
			Here $p=8, q=6$, and $n=45$, and the corresponding upper edge is $d_+=0.5236$. Thus only the first sample eigenvalue satisfies $l_1>d_+$. The remaining eigenvalues all lie below the threshold, with the second eigenvalue $l_2=0.5198$ being close to, but still below, $d_+$. This suggests that the data contain one statistically separated leading direction. In other words, the dominant association between the species block and the soil block is essentially captured by a single leading canonical direction.
			
			\begin{figure}[htbp]
				\centering
				\includegraphics[width=0.7\textwidth]{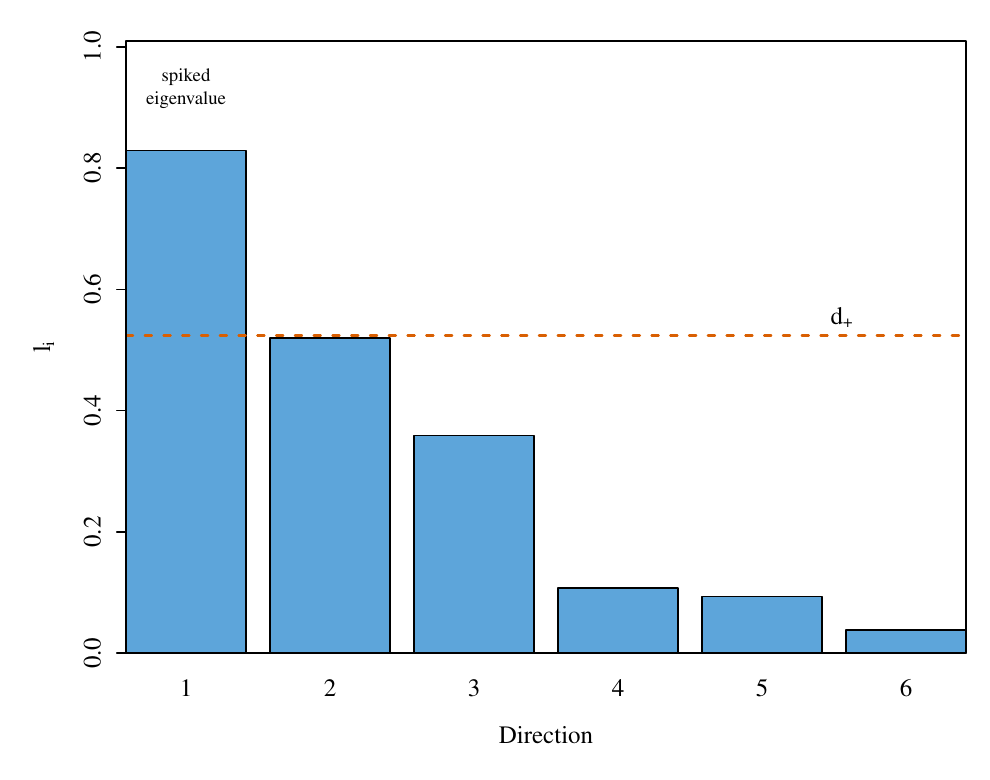}
				\caption{Sample squared canonical correlations for the limestone grassland community data. The dashed horizontal line indicates the upper edge \(d_+=0.5236\). Only the first eigenvalue is identified as a sample spiked eigenvalue.}
				\label{fig:limestone_eigs}
			\end{figure}
			
			We next apply the plug-in estimation procedure stated in Proposition \ref{pro1} to this separated direction. For the sample spiked eigenvalue $l_1=0.8293$, we define
			$$\hat r_1:=\gamma^{-1}(l_1),~~
			\hat\mu_1:=\frac{1}{1+d(\hat r_1)}.$$
			This gives
			$$\hat{r}_1=0.7494,~~\hat{\mu}_1=0.8155.$$
			The value of $l_1$ indicates that the first direction is detectable from the sample spectrum, whereas $\hat{\mu}_1$ gives a different type of information: it quantifies the estimated asymptotic alignment between the first sample canonical direction and its population counterpart. Thus, the first direction in this dataset is not only separated from the bulk, but is also estimated to have relatively high recovery quality.
			
			\begin{table}[htbp]
				\centering
				\caption{The leading sample canonical direction and its estimated recovery quality.}
				\label{tab:realdata_alignment}
				\begin{tabular}{c c}
					\hline
					Quantity & Value \\
					\hline
					\(\bbnu_1\) &
					\((0.7670,\ 0.5141,\ 0.1644,\ 0.1037,\ -0.0952,\ 0.1547,\ 0.1197,\ -0.2495)\) \\
					\(\bxi_1\) &
					\((0.2968,\ 0.4852,\ 0.7982,\ 0.0840,\ -0.1542,\ -0.0924)\) \\
					\(l_1\) & \(0.8293\) \\
					\(\hat r_1\) & \(0.7494\) \\
					\(\hat\mu_1\) & \(0.8155\) \\
					\hline
				\end{tabular}
			\end{table}
			
			The vectors $\bbnu_1$ and $\bxi_1$ in Table \ref{tab:realdata_alignment} are the two sample canonical direction vectors associated with the first sample spiked eigenvalue $l_1$, on the species and soil sides, respectively. They should not be interpreted as consistent estimates of the corresponding population directions. Rather, they identify which variables contribute most to the leading sample direction, while the plug-in estimator $\hat{\mu}_1$ provides the corresponding assessment of recovery quality. On the species side, the first sample canonical direction has its largest coefficients on the first two variables, whose coefficients are 0.7670 and 0.5141. On the soil side, the largest coefficient is attached to the third soil variable, with coefficient 0.7982, while the first two soil variables also have positive contributions. Under the sign convention used in the table, the fifth and sixth soil variables have negative coefficients. Thus, at the sample level, the leading canonical direction is primarily determined by a small number of variables from each block. The value $\hat{\mu}_1=0.8155$ further indicates that this empirically observed direction has relatively high estimated alignment with its population counterpart. Therefore, the loading vectors provide a sample-level interpretation of the leading direction, whereas $\hat{\mu}_1$ quantifies the reliability of this interpretation under the high-dimensional asymptotic framework.
			
			\begin{figure}[htbp]
				\centering
				\includegraphics[width=0.58\textwidth]{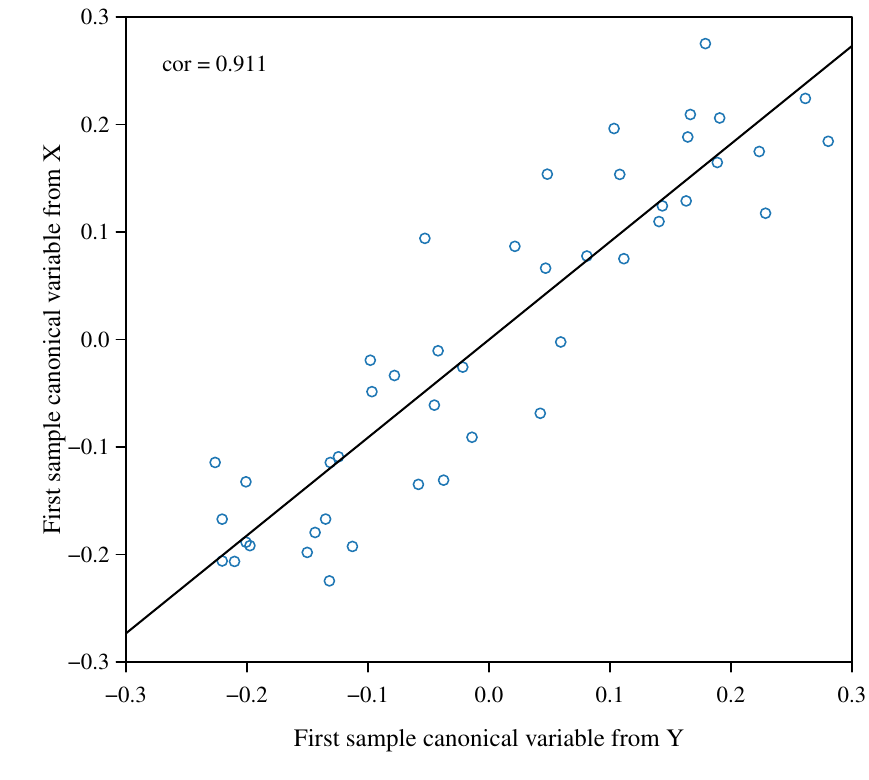}
				\caption{Scatter plot of the first pair of sample canonical variates for the limestone grassland community data.}
				\label{fig:limestone_scatter}
			\end{figure}
			
			Figure \ref{fig:limestone_scatter} plots the first pair of sample variates $\bbnu_1^\top\bbX$ and $\bxi_1^\top\bbY$. The sample correlation between them is approximately $$\sqrt{l_1}=0.9107.$$ The points display a clear positive linear trend, which is consistent with the large first squared canonical correlation. This plot describes the sample-level association between the two canonical variates. The plug-in estimator $\hat{\mu}_1$, on the other hand, provides an estimate of the population-level recovery quality: it measures how reliably the corresponding sample direction represents its population counterpart under the high-dimensional asymptotic framework.
			
			This example illustrates the additional information provided by the proposed plug-in estimator. The inequality $l_1>d_+$ shows that the leading direction is detectable from the sample spectrum, but it does not by itself indicate whether the corresponding sample direction is a reliable representative of the population direction. The estimator $\hat{\mu}_1$ addresses this second question. In the present dataset, $\hat{\mu}_1=0.8155$ suggests that the leading sample canonical direction retains a substantial amount of population-level directional information. Thus, the proposed estimator does not merely restate the presence of a sample spiked eigenvalue; rather, it quantifies the recovery quality of the associated sample direction.
			
			This distinction is important in the high-dimensional regime. Since sample canonical directions are generally not consistent estimators of their population counterparts, the direct recovery of the population directions is no longer the appropriate benchmark. A more relevant question is how much population-level structural information is preserved by the sample directions. Theorem \ref{th1} provides a quantitative answer to this question through the limit of the squared alignment, and Proposition \ref{pro1} makes this quantity estimable from the data. In this sense, $\hat{\mu}_i$ turns the non-consistency of sample canonical directions into a data-based measure of recovery quality, thereby complementing the usual eigenvalue-based detection of spiked directions.
			
			\section{Simulation}\label{simulation}
			We conduct Monte Carlo simulations to examine the finite-sample performance of the first-order limit and the Gaussian fluctuation stated in Theorems \ref{th1} and \ref{th2}. Throughout, we take $p=\lfloor c_1n \rfloor$ and $q=\lfloor c_2n\rfloor$ with $(c_1,c_2)=(0.5,0.3)$. The data matrices are generated under the Gaussian model: the entries $\{y_{ij}\}$ and $\{w_{ij}\}$ are independent and identically distributed (i.i.d.) as $\mathcal N(0,1/n)$. The simulations have two goals: to illustrate the convergence of $\langle \bbu_i,\bbnu_i\rangle^2$ to its deterministic limit, and to assess the normal approximation after the standardization in Theorem \ref{th2}. For each value of $n$, we repeat the experiment independently $5000$ times and compute $\langle \bbu_i,\bbnu_i\rangle^2$ in each repetition. 
			
			We consider two population canonical correlation structures. In the first setting, the population spike structure has rank one, with a single spike $r_1=0.8$. In the second setting, $\boldsymbol\Lambda\bLambda^\top$ has rank three, with spikes $r_1=0.86$, $r_2=0.81$, and $r_3=0.76$. For $(c_1, c_2)=(0.5, 0.3)$, the phase-transition threshold is $$r_c=\sqrt{\dfrac{c_1c_2}{(1-c_1)(1-c_2)}}\approx 0.655.$$ In both settings, the spikes are chosen above the phase-transition threshold, so that the associated sample eigenvalues separate from the bulk.
			
			We first examine the deterministic limit in Theorem \ref{th1}. For the rank-one setting, Figure \ref{Figccalimit1} displays, for
			$n\in\{400, 800, 1200, 1600, 2000, 2400, 2800, 3200, 3600, 4000\}$, the boxplots of $\langle \bbu_1,\bbnu_1\rangle^2$ over $5000$ repetitions, together with the corresponding Monte Carlo mean curve. As $n$ increases, the empirical distribution becomes more concentrated; both the interquartile range and the whiskers shrink. The Monte Carlo mean approaches the deterministic limit $1/(1+d(r_1))$ predicted by Theorem \ref{th1}. A mild finite-sample deviation from the theoretical limit is observed at small $n$ (e.g., $n=400$), but it diminishes rapidly as $n$ grows, indicating a stable convergence to the theoretical limit.
			
			\begin{figure}[htbp]
				\centering
				\includegraphics[scale=0.55]{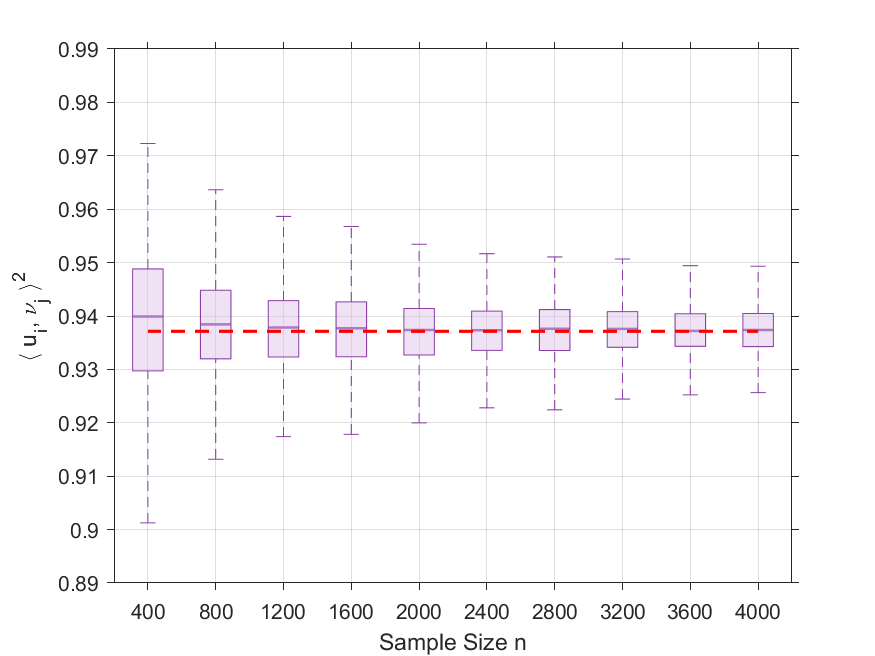}
				\caption{Rank-one setting: boxplots of $\langle \bbu_1, \bbnu_1\rangle^2$ over $5000$ repetitions for $n\in\{400, 800, \ldots,4000\}$, illustrating the first-order limit in Theorem \ref{th1}.}
				\label{Figccalimit1}
			\end{figure}
			
			We next investigate the fluctuation result in Theorem \ref{th2}. Still in the rank-one setting, we fix a large sample size $n=4000$ and standardize $\langle \bbu_1,\bbnu_1\rangle^2$ according to Theorem \ref{th2}: 
			$$
			Z_1
			=
			\frac{\sqrt{n}\Big(\langle \bbu_1,\bbnu_1\rangle^2-1/(1+d(r_1))\Big)}
			{\sigma(r_1)/(1+d(r_1))^2}.
			$$
			Here and below, $\sigma(r_i)=\sqrt{\sigma^2(r_i)}$. Figure \ref{Figccadis1} shows the histogram of $Z_1$ based on $5000$
			repetitions, overlaid with the $\mathcal N(0,1)$ density. The empirical histogram agrees well with the standard normal density, supporting the Gaussian approximation in Theorem \ref{th2} for large but finite $n$.
			
			\begin{figure}[htbp]
				\centering
				\includegraphics[scale=0.5]{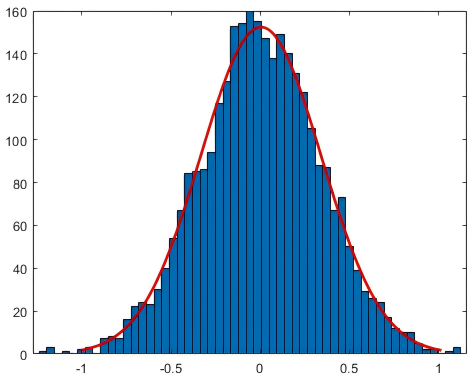}
				\caption{Rank-one setting: histogram of the standardized statistic $Z_1$ at $n=4000$ over $5000$ repetitions, overlaid with the $\mathcal N(0,1)$ density, supporting the Gaussian approximation in Theorem \ref{th2}.}
				\label{Figccadis1}
			\end{figure}
			
			We then turn to the rank-three setting. For each spike $r_i$
			$(i=1,2,3)$, we compute $\langle \bbu_i,\bbnu_i\rangle^2$ over repeated Monte Carlo simulations and plot the running empirical mean against the number of repetitions. Figure \ref{ccalimit2} displays the convergence trajectories together with the corresponding theoretical first-order limits. The three panels show that the running averages stabilize around their predicted limits, providing numerical support for the first-order asymptotic theory. The convergence behavior varies across the three spike components. In all three cases, the running empirical means fluctuate around their corresponding theoretical first-order limits and gradually stabilize as the number of repetitions increases. The components associated with stronger spikes, such as $r_1$ and $r_2$, settle near their limiting values after the initial transient fluctuations. The weakest spike $r_3$ displays a more pronounced initial deviation and a slower stabilization pattern. Overall, these numerical results are broadly consistent with the theoretical
			prediction that stronger spectral separation improves the stability of the associated squared alignment.
			
			\begin{figure}[htbp]
				\centering
				\subfloat[$r_1=0.86$]{\includegraphics[width=.33\linewidth]{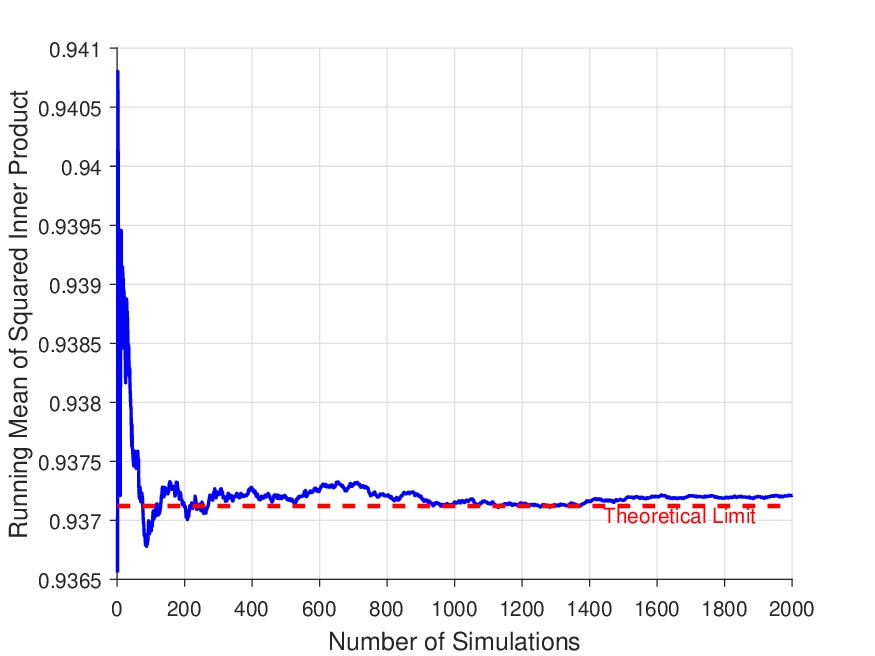}}
				\subfloat[$r_2=0.81$]{\includegraphics[width=.33\linewidth]{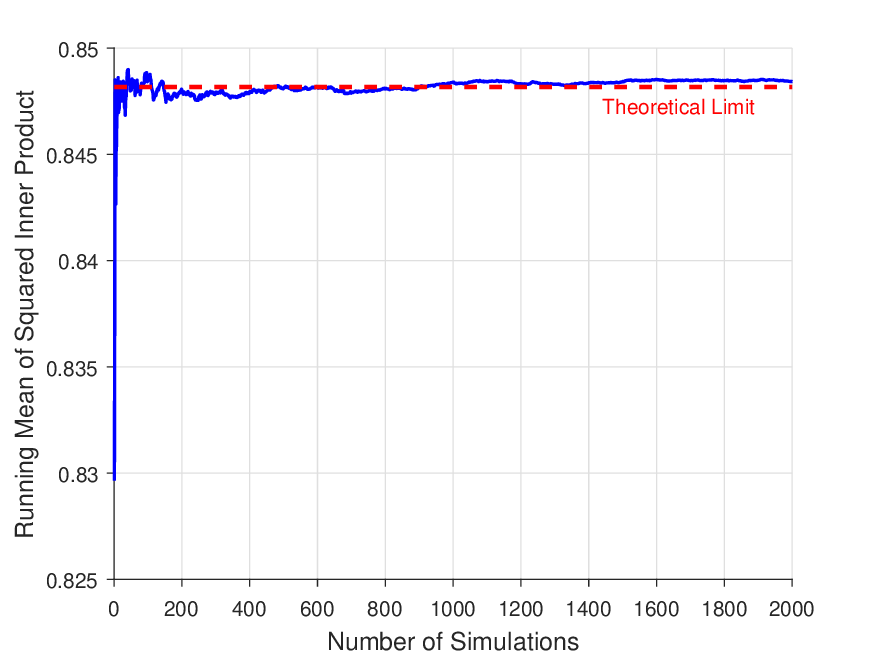}}
				\subfloat[$r_3=0.76$]{\includegraphics[width=.33\linewidth]{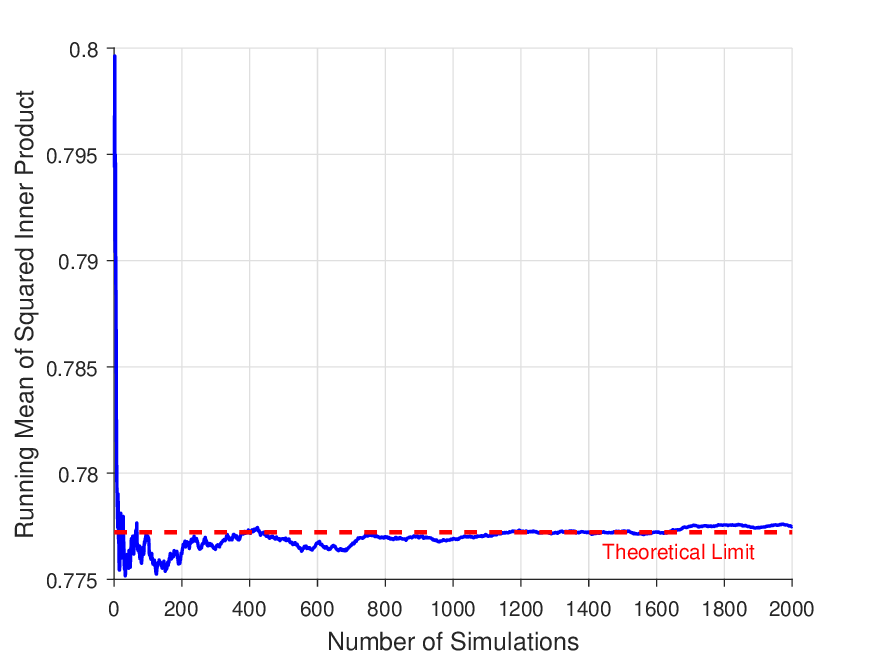}}
				\caption{Rank-three setting: convergence trajectories of the running means of $\langle \bbu_i,\bbnu_i\rangle^2$ $(i=1,2,3)$ over 5000 repetitions, illustrating the first-order limits and the dependence on signal strength.}
				\label{ccalimit2}
			\end{figure}
			
			Finally, we assess the Gaussian fluctuation in the rank-three setting. For $n\in\{2000,4000,6000\}$, we form the standardized statistics
			$$
			Z_i
			=
			\frac{\sqrt{n}\Big(\langle \bbu_i,\bbnu_i\rangle^2-1/(1+d(r_i))\Big)}
			{\sigma(r_i)/(1+d(r_i))^2},
			\qquad i=1,2,3,
			$$
			and repeat the experiment $5000$ times. Figures \ref{ccadis2000}--\ref{ccadis6000} provide three complementary
			diagnostics for the asymptotic normality of the standardized statistics $Z_i$ $(i=1,2,3)$ in the rank-three setting. Figure \ref{ccadis2000} shows the empirical histograms at $n=2000$, overlaid with the standard normal density. The histograms are centered near zero and exhibit an approximately bell-shaped profile, indicating that the normal approximation is already visible at this sample size. Figure \ref{ccadis4000} presents the corresponding Q-Q plots at $n=4000$. The sample quantiles lie close to the reference normal line for all three spiked components, which confirms the normal approximation from a quantile perspective. Mild deviations appear mainly in the tails, as is typical in finite-sample simulations. Figure \ref{ccadis6000} compares the empirical distribution functions at $n=6000$ with the standard normal distribution function. The empirical CDFs almost overlap with the theoretical CDF, showing good agreement at the level
			of the full distribution function. Overall, these graphical diagnostics demonstrate that the finite-sample behavior of the standardized statistics $Z_i$ is consistent with the asymptotic normality result established in Theorem \ref{th2}.
			
			\begin{figure}[htbp]
				\centering
				\subfloat[$r_1=0.86,\, n=2000$]{\includegraphics[width=.33\linewidth]{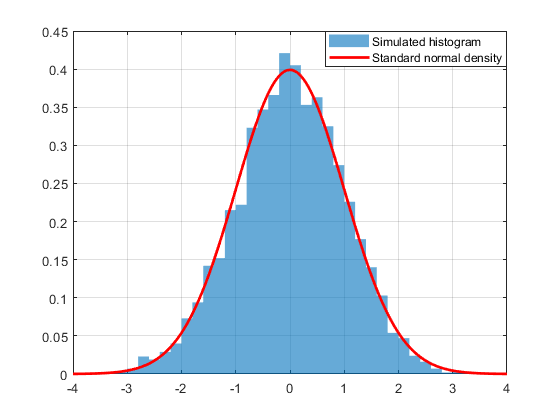}}
				\subfloat[$r_2=0.81,\, n=2000$]{\includegraphics[width=.33\linewidth]{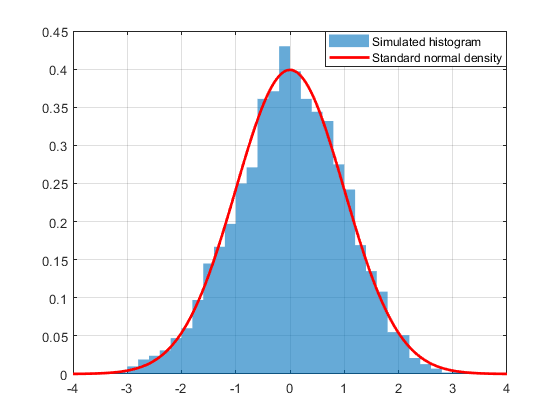}}
				\subfloat[$r_3=0.76,\, n=2000$]{\includegraphics[width=.33\linewidth]{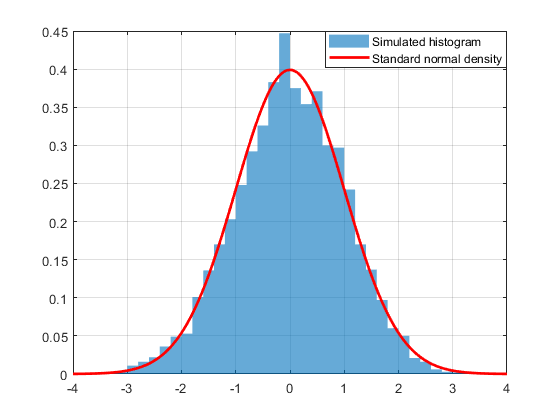}}
				\caption{Rank-three setting: histograms of the standardized statistics $Z_i$ ($i=1,2,3$) at $n=2000$ over $5000$ repetitions, overlaid with the $\mathcal N(0,1)$ density.}
				\label{ccadis2000}
			\end{figure}
			
			\begin{figure}[htbp]
				\centering
				\subfloat[$r_1=0.86,\, n=4000$]{\includegraphics[width=.33\linewidth]{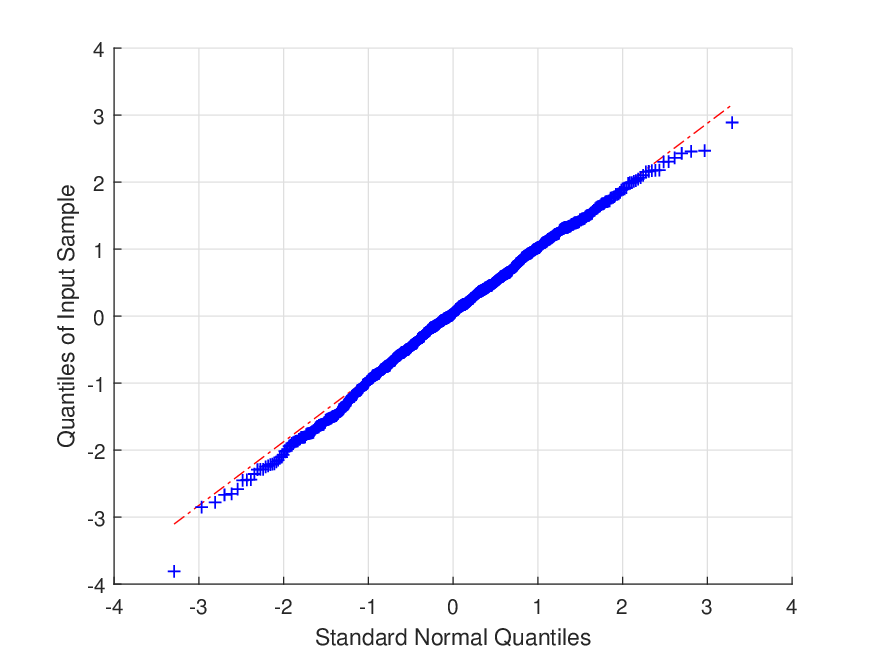}}
				\subfloat[$r_2=0.81,\, n=4000$]{\includegraphics[width=.33\linewidth]{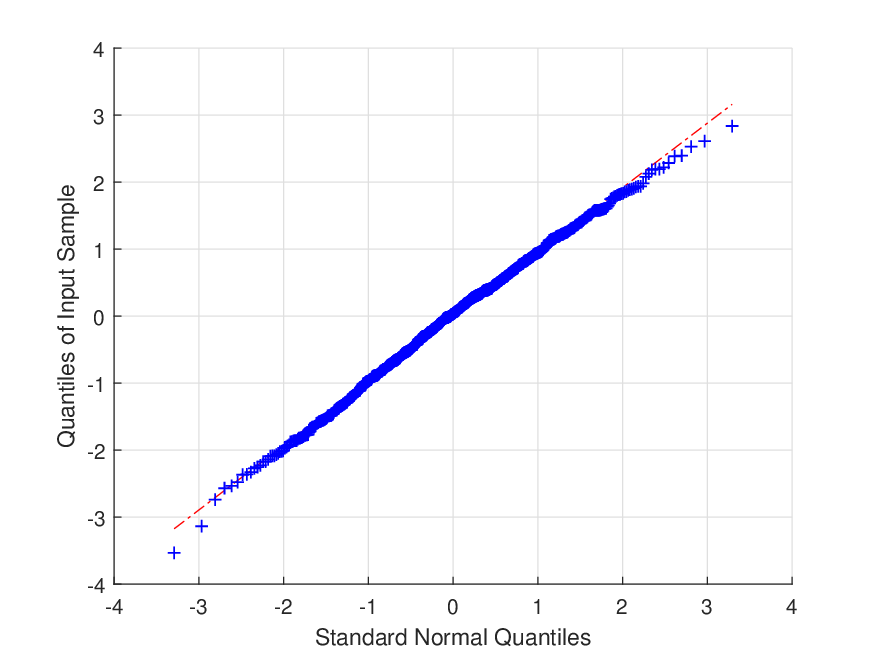}}
				\subfloat[$r_3=0.76,\, n=4000$]{\includegraphics[width=.33\linewidth]{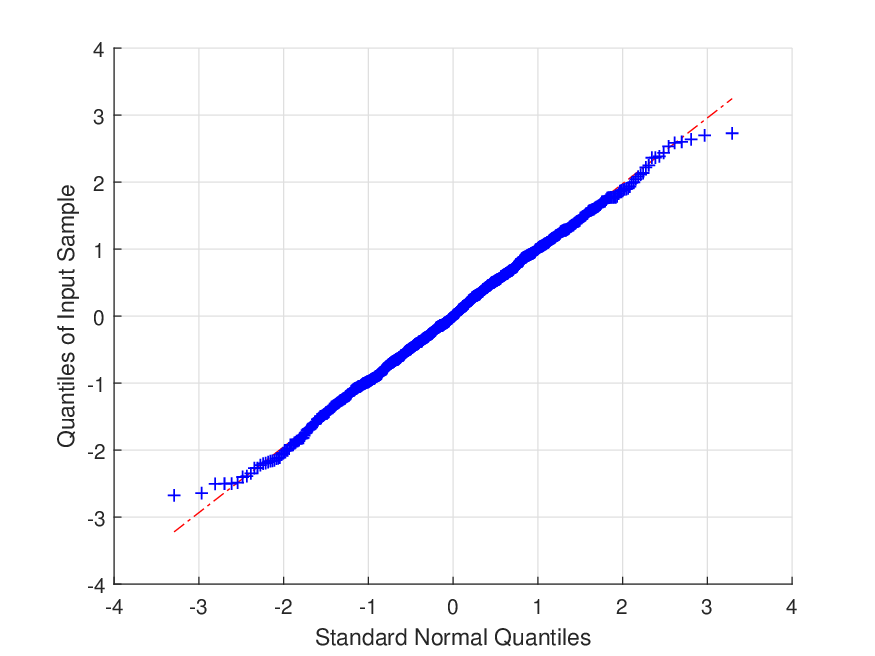}}
				\caption{Rank-three setting: Q-Q plots of the standardized statistics $Z_i$ $(i=1,2,3)$ at $n=4000$ against the standard normal distribution.}
				\label{ccadis4000}
			\end{figure}
			
			\begin{figure}[htbp]
				\centering
				\subfloat[$r_1=0.86,\, n=6000$]{\includegraphics[width=.33\linewidth]{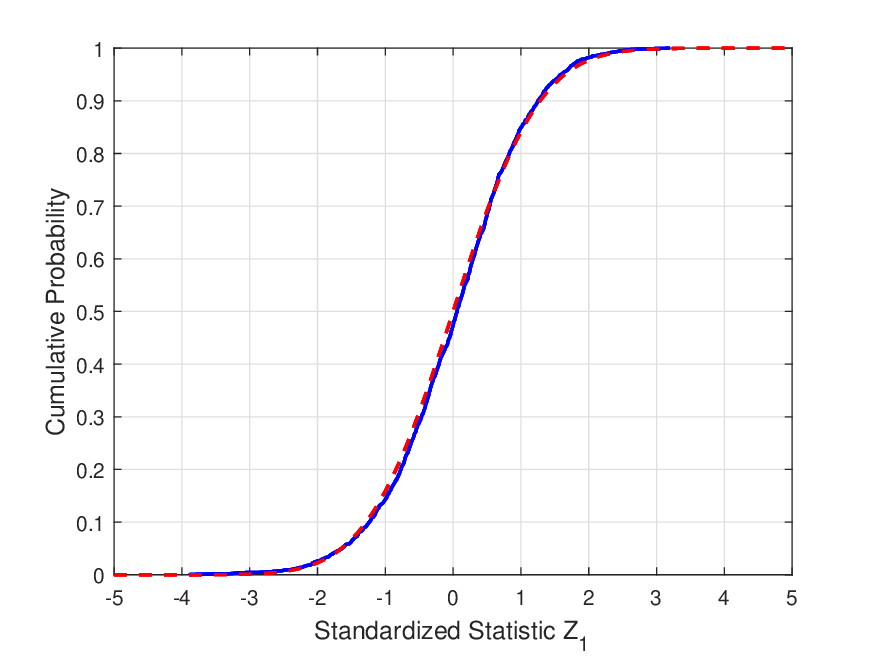}}
				\subfloat[$r_2=0.81,\, n=6000$]{\includegraphics[width=.33\linewidth]{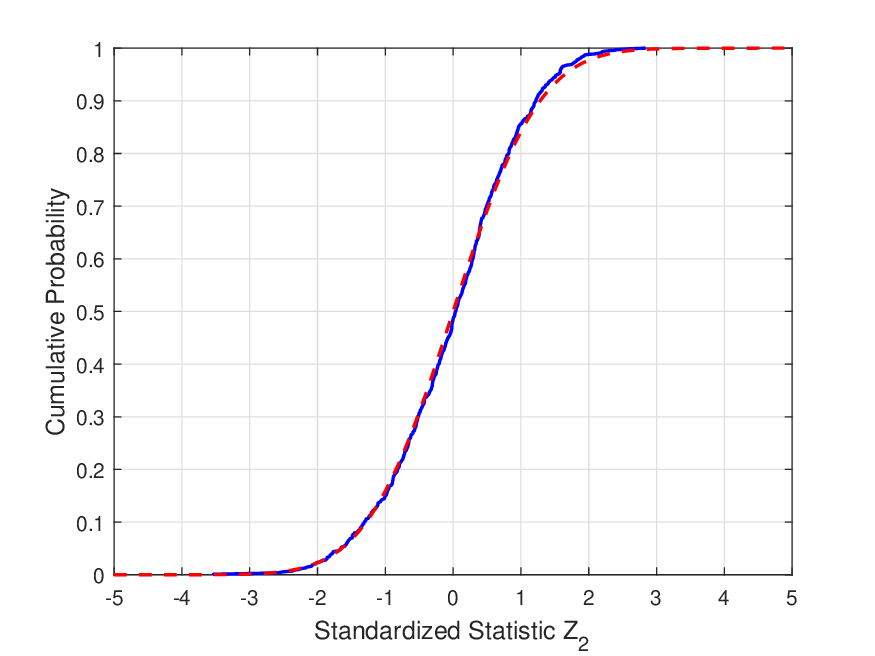}}
				\subfloat[$r_3=0.76,\, n=6000$]{\includegraphics[width=.33\linewidth]{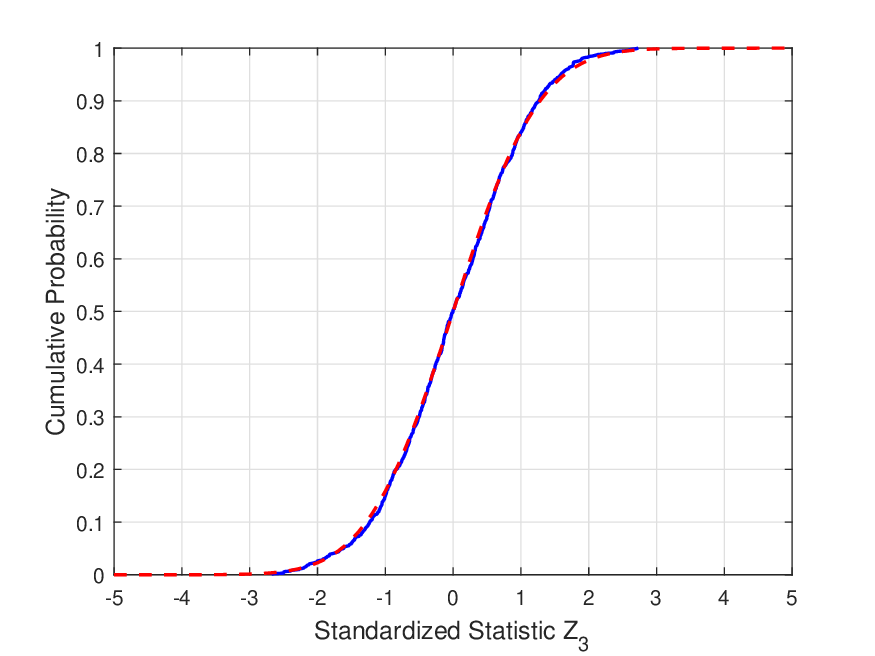}}
				\caption{Rank-three setting: empirical distribution functions of the standardized statistics $Z_i$ $(i=1,2,3)$ at $n=6000$, compared with the standard normal distribution function.}
				\label{ccadis6000}
			\end{figure}
				
				\section{Proof of Main Theorems}\label{zmcl} 
				This section outlines the main proof strategy. We first use the characteristic equation to transform the eigenvector projection problem into a problem concerning quadratic forms of resolvent matrices. We then demonstrate how the proofs of our main results, Theorem \ref{th1} and \ref{th2}, are reduced to the computation of the deterministic limits and fluctuations of these quadratic forms, which are established in the subsequent technical lemmas. 
				
				\subsection{Schur Complement Reduction}\label{ce} 
				Consider the following characteristic equation: 
				\begin{align*} \bbS_{xy}\bbS_{yy}^{-1}\bbS_{yx}\bbnu_i=l_i\bbS_{xx}\bbnu_i, \end{align*} i.e., 
				\begin{align*} \left( \bbS_{xy}\bbS_{yy}^{-1}\bbS_{yx}-l_i\bbS_{xx}\right) \bbnu_i=0, \end{align*} 
				where $l_i$ is the $i$-th eigenvalue of $\bbS_{xx}^{-1}\bbS_{xy}\bbS_{yy}^{-1}\bbS_{yx}$, and $\bbnu_i$ is the eigenvector corresponding to the eigenvalue $l_i$. Using the model $\bbX=\boldsymbol{\Lambda}\bbY+\boldsymbol{\Gamma}\bbW$ and the definitions of the sample covariance matrices, we can write $\bbS_{xx} = \bbX\bbX^\top$ and $\bbS_{xy}\bbS_{yy}^{-1}\bbS_{yx} = \bbX \bbP_y \bbX^\top$, where $\bbP_y = \bbY^\top(\bbY\bbY^\top)^{-1}\bbY$. Note that $\bbP_y$ is an orthogonal projector: $\bbP_y^\top=\bbP_y$ and $\bbP_y^2=\bbP_y$. The equation becomes $\left( \bbX\bbP_y\bbX^\top - l_i \bbX\bbX^\top \right) \bbnu_i = 0$, or 
				\begin{align*} 
					\bbX(\bbP_y-l_i\bbI_n)\bbX^\top \bbnu_i=0. 
				\end{align*} 
			Let $\boldsymbol{\Lambda}$ have the SVD $\boldsymbol{\Lambda}=\bbU
			\begin{pmatrix}\boldsymbol{\Lambda}_k&\mathbf 0\\ \mathbf 0&\mathbf 0\end{pmatrix}\bbV^\top$,
			where $\bbU=(\bbU_1,\bbU_2)\in\mathbb{R}^{p\times p}$, $\bbV=(\bbV_1,\bbV_2)\in\mathbb{R}^{q\times q}$ are orthogonal,
			and $\boldsymbol{\Lambda}_k\in\mathbb{R}^{k\times k}$ satisfies $\boldsymbol{\Lambda}_k\boldsymbol{\Lambda}_k^\top=\mathrm{diag}(r_1,\dots,r_k)$.
			We also write $\boldsymbol{\Gamma}=\bbU\mathrm{diag}(\boldsymbol{\Gamma}_k,\bbI_{p-k})\bbU^\top$ with
			$\boldsymbol{\Gamma}_k=\mathrm{diag}(\sqrt{1-r_1},\dots,\sqrt{1-r_k})$.
			 We further analyze the block matrix form of the characteristic equation. Let $\bbU = (\bbU_1, \bbU_2)$ where $\bbU_1$ consists of the first $k$ columns (the population spiked eigenvectors) and $\bbU_2$ the remaining $p-k$. Substituting the model for $\bbX$, we have that $\bbnu_i$ satisfies 
				\begin{align*} 
					&\left( \bbU_1, \bbU_2 \right) \left(\begin{pmatrix} \boldsymbol{\Lambda}_k &\mathbf 0\\ \mathbf 0 & \mathbf 0{ } \end{pmatrix}(\bbV_1,\bbV_2)^\top \bbY+\begin{pmatrix}\boldsymbol{\Gamma}_k & \mathbf 0 \\ \mathbf 0 & \bbI_{p-k} \end{pmatrix}\bbU^\top \bbW\right)(\bbP_y-l_i\bbI_n)\\ &~~~~~~~~~~~~~~~~~~~~~~~~\times\left(\bbY^\top (\bbV_1,\bbV_2)\begin{pmatrix}\boldsymbol{\Lambda}_k &\mathbf 0 \\ \mathbf 0 & \mathbf 0 \end{pmatrix}+\bbW^\top \bbU\begin{pmatrix}\boldsymbol{\Gamma}_k &\mathbf 0 \\ \mathbf 0 & \bbI_{p-k} \end{pmatrix}\right)\begin{pmatrix} \bbU_1^\top{ } \\ \bbU_2^\top \end{pmatrix}\bbnu_i=0, \end{align*} 
					expanding this yields: 
					\begin{align*} 
						\left( \bbU_1, \bbU_2 \right) \begin{pmatrix} \bbA & \bbB\\ \bbC & \bbD \end{pmatrix} \begin{pmatrix} \boldsymbol{\varpi}_{i1} \\ \boldsymbol{\varpi}_{i2} \end{pmatrix}=0, \end{align*} 
					where $\boldsymbol{\varpi}_{i1} =\bbU_1^\top \bbnu_i$ (a $k \times 1$ vector) and $\boldsymbol{\varpi}_{i2} =\bbU_2^\top \bbnu_i$ (a $(p-k) \times 1$ vector). The matrices are defined as 
					\begin{align*} 
						&\bbA=\bbA(l_i):=(\boldsymbol{\Lambda}_k\bbV_1^\top \bbY\!+\!\boldsymbol{\Gamma}_k \bbU_1^\top \bbW)(\bbP_y\!-\!l_i\bbI_n)(\bbY^\top \bbV_1\boldsymbol{\Lambda}_k\!+\!\bbW^\top \bbU_1\boldsymbol{\Gamma}_k),\\ &\bbB=\bbB(l_i):=(\boldsymbol{\Lambda}_k\bbV_1^\top \bbY\!+\!\boldsymbol{\Gamma}_k \bbU_1^\top \bbW)(\bbP_y\!-\!l_i\bbI_n)\bbW^\top \bbU_2,\\ &\bbC=\bbC(l_i):=\bbU_2^\top \bbW(\bbP_y\!-\!l_i\bbI_n)(\bbY^\top \bbV_1\boldsymbol{\Lambda}_k\!+\!\bbW^\top \bbU_1\boldsymbol{\Gamma}_k),\\ &\bbD=\bbD(l_i):=\bbU_2^\top \bbW(\bbP_y\!-\!l_i\bbI_n)\bbW^\top \bbU_2.
					\end{align*} 
					Since $l_i$ is a sample spike and $l_i\xrightarrow{a.s.}\gamma_i$, on an event whose probability tends to one, the matrix $\bbD(l_i)=\bbU_2^\top\bbW(\bbP_y-l_i\bbI_n)\bbW^\top\bbU_2$ is invertible. On this event, we may apply the Schur complement to the block matrix above. 
					 \begin{align*} 
					 	\left( \bbU_1 ,\bbU_2 \right) \begin{pmatrix}\bbI_k &\bbB\bbD^{-1}\\ 0 &\bbI_{p-k}\end{pmatrix}\begin{pmatrix}\bbA-\bbB\bbD^{-1}\bbC &\mathbf 0\\ \mathbf 0 &\bbD \end{pmatrix}\begin{pmatrix} \boldsymbol{\varpi}_{i1}{ } \\ \bbD^{-1}\bbC\boldsymbol{\varpi}_{i1}+\boldsymbol{\varpi}_{i2}{ } \end{pmatrix}=0, 
					 \end{align*} 
					 This yields the equivalent system of equations: \begin{align*} 
					 	\left\{\begin{matrix} (\bbA-\bbB\bbD^{-1}\bbC)\boldsymbol{\varpi}_{i1}=0,\\ \bbC\boldsymbol{\varpi}_{i1}+\bbD\boldsymbol{\varpi}_{i2}=0, \end{matrix}\right. 
					 \end{align*} which implies \begin{align}\label{twoequation} \left\{\begin{matrix} (\bbA-\bbB\bbD^{-1}\bbC)\boldsymbol{\varpi}_{i1}=0,\\ \boldsymbol{\varpi}_{i2}=-\bbD^{-1}\bbC\boldsymbol{\varpi}_{i1}. \end{matrix}\right. \end{align} The first equation determines the direction of $\boldsymbol{\varpi}_{i1}$, while the second equation links $\boldsymbol{\varpi}_{i2}$ to $\boldsymbol{\varpi}_{i1}$. The vector of interest is $\boldsymbol{\varpi}_{i1}$, which contains the projections $\langle \bbu_j, \bbnu_i \rangle$ for $j=1,\dots,k$. Since $\bbnu_i$ has unit norm, we have $\|\boldsymbol{\varpi}_{i1}\|^2 + \|\boldsymbol{\varpi}_{i2}\|^2 = 1$. Substituting the second equation of \eqref{twoequation} gives the normalization constraint: \begin{align*} \boldsymbol{\varpi}_{i1}^\top \boldsymbol{\varpi}_{i1} + \boldsymbol{\varpi}_{i1}^\top \bbC^\top (\bbD^{-1})^\top \bbD^{-1} \bbC \boldsymbol{\varpi}_{i1} = 1. \end{align*} Since $\bbD$ is symmetric, this simplifies to \begin{align}\label{norm_constraint} \boldsymbol{\varpi}_{i1}^\top \left( \bbI_k+\bbC^\top \bbD^{-2}\bbC\right) \boldsymbol{\varpi}_{i1}=1. \end{align} The $k$-dimensional matrix $\bbA-\bbB\bbD^{-1}\bbC$ above is consistent with the content shown in equation (4.13) of Bao et al. (2019) \cite{bao2019canonical}. Let 
					 \[
					 M_{\Lambda}(z):=\bbA(z)-\bbB(z)\bbD(z)^{-1}\bbC(z)
					 \]
					 be the (random) $k\times k$ matrix-valued function arising from the Schur complement.
					 For the $i$-th sample outlier $l_i$ with limit $\gamma_i$, we invoke (6.9) of \cite{bao2019canonical}.
					 In particular, for $\alpha,\beta\in I_i$ (the index set associated with the population spike $r_i$), there exists
					 a random matrix $\mathcal{R}$ (defined in Lemma 6.2 of \cite{bao2019canonical}) such that
					 \begin{equation}\label{eq:Mlambda-expansion}
					 	(M_{\Lambda})_{\alpha\beta}(l_i)
					 	= M_{\alpha\beta}(\gamma_i)
					 	-\delta_{\alpha\beta}\Delta_{ii}\,(l_i-\gamma_i)\,(1+o_p(1))
					 	+\frac{1}{\sqrt n}\mathcal{R}_{\alpha\beta}
					 	+o_p(n^{-1/2}),
					 \end{equation}
					 where $M(z)$ is a deterministic diagonal matrix with diagonal entries $m_j(z)$ and $\Delta$ is also diagonal.
					 Moreover, $m_j(\gamma_j)=0$ for each $j$. Let $\mathcal{J}_i$ denote the index set of population eigenvalues equal to $r_i$.
					 For the simple spike $r_i$, we have
					 $$
					 (M_\Lambda)_{ii}(l_i)=O_p(n^{-1/2}).
					 $$
					 Moreover, the off-diagonal blocks satisfy
					 $$
					 (M_\Lambda(l_i))_{-i,i}=O_p(n^{-1/2}),
					 \qquad
					 (M_\Lambda(l_i))_{i,-i}=O_p(n^{-1/2}).
					 $$
					 For $j\neq i$, since $m_j(\gamma_i)\neq0$, the diagonal entries of
					 $M(\gamma_i)$ in the $(-i,-i)$ block are bounded away from zero. Hence
					 $$
					 (M_\Lambda(l_i))_{-i,-i}
					 =
					 (M(\gamma_i))_{-i,-i}+O_p(n^{-1/2})
					 $$
					 is invertible with probability tending to one, and
					 $$
					 \|(M_\Lambda(l_i))_{-i,-i}^{-1}\|=O_p(1).
					 $$
					 
					 Therefore, from the equation $M_{\Lambda}(l_i)\boldsymbol{\varpi}_{i1}=0$ we obtain
					 \[
					 \boldsymbol{\varpi}_{i1,-i}=-(M_{\Lambda}(l_i))_{-i,-i}^{-1}(M_{\Lambda}(l_i))_{-i,i}\,\boldsymbol{\varpi}_{i1,i}
					 =O_p(n^{-1/2})\,\boldsymbol{\varpi}_{i1,i},
					 \]
					 which shows that all components of $\boldsymbol{\varpi}_{i1}$ except the $i$-th one are of order $O_p(n^{-1/2})$. Thus, under the simple spike assumption $\mathcal J_i=\{i\}$,
					$$
					\boldsymbol{\varpi}_{i1,-i}
					=
					O_p(n^{-1/2})\,\boldsymbol{\varpi}_{i1,i}.
					$$
					Since $\|\bbnu_i\|=1$, we have $\boldsymbol{\varpi}_{i1,i}=O_p(1)$, and hence
					$$
					\|\boldsymbol{\varpi}_{i1,-i}\|=O_p(n^{-1/2}).
					$$

					Let
					\[
					\bbM_i:=\bbI_k+\bbC^\top\bbD^{-2}\bbC.
					\]
					From the Schur complement reduction, we have
					\[
					\boldsymbol{\varpi}_{i1}^\top \bbM_i\boldsymbol{\varpi}_{i1}=1.
					\]
					Moreover, for a simple spike \(r_i\),
					\[
					\boldsymbol{\varpi}_{i1,-i}
					=
					O_p(n^{-1/2})\boldsymbol{\varpi}_{i1,i}.
					\]
					Since \(\|\bbnu_i\|=1\), we have \(\boldsymbol{\varpi}_{i1,i}=O_p(1)\), and hence
					\[
					\|\boldsymbol{\varpi}_{i1,-i}\|=O_p(n^{-1/2}).
					\]
					Define
					\[
					\mathbf r_i
					=
					\boldsymbol{\varpi}_{i1}
					-
					\boldsymbol{\varpi}_{i1,i}\bbe_i.
					\]
					Then the \(i\)-th component of \(\mathbf r_i\) is zero and
					\[
					\boldsymbol{\varpi}_{i1}
					=
					\boldsymbol{\varpi}_{i1,i}\bbe_i+\mathbf r_i,
					\qquad
					\|\mathbf r_i\|=O_p(n^{-1/2}).
					\]
					
					Substituting this decomposition into the normalization constraint gives
					\[
					1
					=
					\boldsymbol{\varpi}_{i1,i}^2[\bbM_i]_{ii}
					+
					2\boldsymbol{\varpi}_{i1,i}\bbe_i^\top\bbM_i\mathbf r_i
					+
					\mathbf r_i^\top\bbM_i\mathbf r_i.
					\]
					By Lemma \ref{ccalemma1}, for \(j\neq i\),
					\[
					[\bbM_i]_{ij}=O_p(n^{-1/2}).
					\]
					Since the \(i\)-th component of \(\mathbf r_i\) is zero and \(k\) is fixed,
					\[
					\bbe_i^\top\bbM_i\mathbf r_i
					=
					\sum_{j\neq i}[\bbM_i]_{ij}(\mathbf r_i)_j
					=
					O_p(n^{-1}).
					\]
					Also, since \(\|\bbM_i\|=O_p(1)\),
					\[
					\mathbf r_i^\top\bbM_i\mathbf r_i
					=
					O_p(n^{-1}).
					\]
					Therefore,
					\[
					1
					=
					\boldsymbol{\varpi}_{i1,i}^2[\bbM_i]_{ii}
					+
					O_p(n^{-1}).
					\]
					Recalling that
					\[
					\boldsymbol{\varpi}_{i1,i}^2
					=
					\langle\bbu_i,\bbnu_i\rangle^2,
					\]
					we obtain the strengthened normalization reduction
					\[
					\langle\bbu_i,\bbnu_i\rangle^2[\bbM_i]_{ii}
					=
					1+O_p(n^{-1}).
					\]
					This strengthened reduction is sufficient for both the first-order limit and the second-order fluctuation. Therefore, up to the order needed for the first- and second-order analysis, the behavior of \(\langle \bbu_i,\bbnu_i\rangle^2\) is governed by the \(i\)-th diagonal entry of \(\bbM_i\). Recalling the definitions of matrices $\bbC$ and $\bbD$, we obtain the decomposition: \begin{align} 
						&\bbM_i\nonumber\\ =&\bbI_k\nonumber\\ &+\boldsymbol{\Gamma}_k \bbU_1^\top \bbW(\bbP_y\!-\!l_i\bbI_n)\bbW^\top \bbU_2\left( \bbU_2^\top \bbW(\bbP_y\!-\!l_i\bbI_n)\bbW^\top \bbU_2\right) ^{-2}\bbU_2^\top \bbW(\bbP_y\!-\!l_i\bbI_n)\bbW^\top \bbU_1\boldsymbol{\Gamma}_k\notag\\ &+\boldsymbol{\Lambda}_k\bbV_1^\top \bbY(\bbP_y\!-\!l_i\bbI_n)\bbW^\top \bbU_2\left( \bbU_2^\top \bbW(\bbP_y\!-\!l_i\bbI_n)\bbW^\top \bbU_2\right) ^{-2}\bbU_2^\top \bbW(\bbP_y\!-\!l_i\bbI_n)\bbY^\top \bbV_1\boldsymbol{\Lambda}_k\notag\\ &+\boldsymbol{\Lambda}_k\bbV_1^\top \bbY(\bbP_y\!-\!l_i\bbI_n)\bbW^\top \bbU_2\left( \bbU_2^\top \bbW(\bbP_y\!-\!l_i\bbI_n)\bbW^\top \bbU_2\right) ^{-2}\bbU_2^\top \bbW(\bbP_y\!-\!l_i\bbI_n)\bbW^\top \bbU_1\boldsymbol{\Gamma}_k\notag\\ &+\boldsymbol{\Gamma}_k \bbU_1^\top \bbW(\bbP_y\!-\!l_i\bbI_n)\bbW^\top \bbU_2\left( \bbU_2^\top \bbW(\bbP_y\!-\!l_i\bbI_n)\bbW^\top \bbU_2\right) ^{-2}\bbU_2^\top \bbW(\bbP_y\!-\!l_i\bbI_n)\bbY^\top \bbV_1\boldsymbol{\Lambda}_k\notag\\ :=&\bbI_k+I_{k1}+I_{k2}+I_{k3}+I_{k4}.\label{Ik1234}
					\end{align} 
					Here $\bbD^{-2}$ means $\bbD^{-1}\bbD^{-1}$.
					Our proofs now hinge on finding the limits and fluctuations of the diagonal elements of these four matrices. 
					
					\subsection{Technical Lemmas} 
					The Schur complement reduction above shows that the proofs of Theorems \ref{th1} and \ref{th2} depend on the asymptotic behavior of the entries of
					\[
					\bbM_i:=\bbI_k+\bbC^\top\bbD^{-2}\bbC.
					\]
					Using the decomposition
					\[
					\bbM_i=\bbI_k+I_{k1}+I_{k2}+I_{k3}+I_{k4},
					\]
					we summarize the required deterministic limits, off-diagonal bounds, and fluctuation limits in the following two lemmas. Their proofs are given in Sections \ref{prooflemma} and \ref{ccalemma3proof}.
					
					\begin{lemma}[First-order expansion and off-diagonal control]\label{ccalemma1}
						Under the assumptions of Theorem \ref{th1}, fix a simple spike \(r_i\). Then the \(i\)-th diagonal entry of \(\bbM_i\) satisfies
						\[
						[\bbM_i]_{ii}
						=
						1+d(r_i)+O_p(n^{-1/2}),
						\]
						where \(d(r_i)\) is the deterministic function appearing in Theorem \ref{th1}.
						Moreover,
						\[
						[I_{k3}]_{ii}=O_p(n^{-1/2}),
						\qquad
						[I_{k4}]_{ii}=O_p(n^{-1/2}).
						\]
						For any \(a\ne b\), the off-diagonal entries satisfy
						\[
						[I_{kj}]_{ab}=O_p(n^{-1/2}),
						\qquad j=1,2,3,4.
						\]
						Consequently,
						\[
						[\bbM_i]_{ab}=O_p(n^{-1/2}),
						\qquad a\ne b.
						\]
						In particular,
						\[
						\|\bbM_i\|=O_p(1).
						\]
					\end{lemma}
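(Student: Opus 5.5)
The plan is to reduce every entry of $I_{k1},\dots,I_{k4}$ to bilinear forms in Gaussian vectors that are independent of a fixed resolvent-type matrix. Rotational invariance is what makes this possible.

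\textbf{Step 1: independence structure.} By rotational invariance of the Gaussian matrices, set $\bbW_1:=\bbU_1^\top\bbW$ ($k\times n$), $\bbW_2:=\bbU_2^\top\bbW$ ($(p-k)\times n$) and $\bbY_1:=\bbV_1^\top\bbY$. Then $\bbW_1$ is independent of $(\bbW_2,\bbY)$. Define
\[
\bbH(z):=(\bbP_y-z\bbI_n)\bbW_2^\top\bigl(\bbW_2(\bbP_y-z\bbI_n)\bbW_2^\top\bigr)^{-2}\bbW_2(\bbP_y-z\bbI_n),
\]
an $n\times n$ matrix that is a function of $(\bbW_2,\bbY)$ only. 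The four terms then read
\[
I_{k1}=\boldsymbol{\Gamma}_k\bbW_1\bbH\bbW_1^\top\boldsymbol{\Gamma}_k,\quad I_{k2}=\boldsymbol{\Lambda}_k\bbY_1\bbH\bbY_1^\top\boldsymbol{\Lambda}_k,\quad I_{k3}=\boldsymbol{\Lambda}_k\bbY_1\bbH\bbW_1^\top\boldsymbol{\Gamma}_k,
\]
with $I_{k4}=I_{k3}^\top$.

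\textbf{Step 2: replace $l_i$ by $\gamma_i$.} Theorem \ref{th:1.4} gives $l_i\to\gamma_i>d_+$. Bao et al.\ also give the fluctuation $l_i-\gamma_i=O_p(n^{-1/2})$, which is implied by the expansion \eqref{eq:Mlambda-expansion} used above. On a high-probability event, $\bbD(z)$ has spectrum bounded away from $0$ uniformly for $z$ in a neighborhood of $\gamma_i$. On that event $\|\bbH(z)\|$ and $\|\partial_z\bbH(z)\|$ are $O(1)\cdot\|\bbW\|^2$-bounded, so each entry changes by $O_p(|l_i-\gamma_i|)=O_p(n^{-1/2})$. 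It therefore suffices to work at the deterministic point $z=\gamma_i$.

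\textbf{Step 3: $I_{k1}$ and $I_{k3}$.} Conditioning on $(\bbW_2,\bbY)$, the rows of $\bbW_1$ are i.i.d.\ $\mathcal N(0,n^{-1}\bbI_n)$. Hence
\[
[\bbW_1\bbH\bbW_1^\top]_{ab}=\delta_{ab}\,n^{-1}\tr\bbH+O_p(n^{-1/2}),
\]
since the conditional variance is $O(n^{-2}\tr\bbH^2)=O(n^{-1})$. This gives $[I_{k1}]_{ii}=(1-r_i)n^{-1}\tr\bbH+O_p(n^{-1/2})$ and $[I_{k1}]_{ab}=O_p(n^{-1/2})$ for $a\ne b$. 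For $I_{k3}$, the entry is linear in the independent centred $\bbW_1$, so its conditional mean vanishes and its conditional variance is $O(n^{-1}\|\bbY_1\bbH\|^2)=O_p(n^{-1})$. Thus every entry of $I_{k3},I_{k4}$, diagonal or not, is $O_p(n^{-1/2})$.

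\textbf{Step 4: $I_{k2}$, the main obstacle.} Here $\bbY_1$ is \emph{not} independent of $\bbH$, because $\bbP_y$ depends on all of $\bbY$. I would handle this as follows.
\begin{enumerate}
\item Write $\bbP_y=\bbP_{y,1}+\bbP_{y,-}$, where $\bbP_{y,1}$ is the projector onto the row space of $\bbY_1$ after it has been orthogonalized against $\bbY_2:=\bbV_2^\top\bbY$. This is a rank-$k$ perturbation of a projector $\bbP_{y,-}$ that is independent of $\bbY_1$.
\item Apply the Woodbury formula to $\bbD^{-1}$ and $\bbD^{-2}$, expressing $\bbY_1\bbH\bbY_1^\top$ through finitely many quadratic forms $\bbY_1\mathbf G\bbY_1^\top$. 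Each $\mathbf G$ is built from $\bbW_2$, $\bbP_{y,-}$ and $(\bbW_2(\bbP_{y,-}-z)\bbW_2^\top)^{-1}$, and is therefore independent of $\bbY_1$.
\item Concentrate each such form around $n^{-1}\tr\mathbf G$ exactly as in Step 3, which makes the off-diagonal entries $O_p(n^{-1/2})$.
\end{enumerate}
If the algebra becomes too messy, the alternative is to use Stein's lemma (Lemma \ref{lemma-cumulant}) to compute $\mathbb E[I_{k2}]_{ab}$ directly and the Gaussian Poincar\'e inequality (Lemma \ref{lemma-pi}) to get variance $O(n^{-1})$. The gradient of each entry is controlled by $\|\bbH\|$ and $\|\bbY\|$, which is sufficient. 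Off-diagonal expectations vanish by the sign symmetry $\bbY_1\mapsto \mathrm{diag}(\pm1)\bbY_1$.

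\textbf{Step 5: deterministic limits and assembly.} The remaining task is to compute deterministic equivalents of normalized traces such as $n^{-1}\tr\bbH(\gamma_i)$. Writing $\bbD^{-2}=\partial_z$ of a resolvent-type quantity, these reduce to the Stieltjes-transform functionals of the null CCA matrix. This produces the functions $F_1(r_i),F_2(r_i)$ evaluated at $\gamma_i$, with concentration $O_p(n^{-1/2})$ again by Lemma \ref{lemma-pi}. Collecting the $(1-r_i)$ factor from $I_{k1}$ and the $r_i$ factor from $I_{k2}$ yields $D_1(r_i)+D_2(r_i)=d(r_i)$, hence $[\bbM_i]_{ii}=1+d(r_i)+O_p(n^{-1/2})$. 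Finally, $\|\bbM_i\|=O_p(1)$ follows because $k$ is fixed and every entry is $O_p(1)$.
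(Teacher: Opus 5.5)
Your proposal is correct. For $I_{k1}$, $I_{k3}$, $I_{k4}$ and all off-diagonal entries it is the paper's argument. You condition on $(\bbY,\bbU_2^\top\bbW)$, use that $\bbU_1^\top\bbW$ is an independent Gaussian block, concentrate the resulting quadratic and bilinear forms, and reduce the diagonal of $I_{k1}$ to $n^{-1}\tr\Phi$ and its $z$-derivative. Indeed, for your $\bbH(z)$ one has exactly $\tr\bbH(z)=(1-2z)\tr\Phi(z)+z(1-z)\partial_z\tr\Phi(z)$, because $(\bbP_y-z)^2=(1-2z)(\bbP_y-z)+z(1-z)\bbI_n$; this is the paper's $(1-2l_i)QQ_1+l_i(1-l_i)QQ_2$.

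You depart from the paper in two places.
\begin{itemize}
\item \emph{Freezing $z=\gamma_i$ (your Step 2).} The paper conditions with $\Phi(l_i)$ inside, i.e.\ it treats $l_i$ as measurable with respect to $(\bbY,\bbU_2^\top\bbW)$. It is not, since $l_i$ depends on $\bbU_1^\top\bbW$ and $\bbV_1^\top\bbY$. Your Lipschitz replacement uses only the rate $l_i-\gamma_i=O_p(n^{-1/2})$, which the paper invokes anyway, so it is the rigorous form of that step.
\item \emph{Decoupling $I_{k2}$.} The paper writes $\bbY=\bbU_y\bLambda_y\bbV_y$ and notes that $\bbP_y=\bbV_y^\top\bbV_y$ does not involve the Haar factor $\bbU_y$. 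Conditionally on $(\bLambda_y,\bbV_y,\bbW)$, $\bbV_1^\top\bbU_y$ is then the first $k$ rows of an independent Haar matrix, and $I_{k2}$ is handled exactly like $I_{k1}$. That is the most economical decoupling.
\end{itemize}

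Your two alternatives for $I_{k2}$ compare as follows.
\begin{itemize}
\item \emph{Woodbury split.} It works but is heavier than it looks. Because $\bbY_1$ lies in the range of $\bbP_{y,1}$, the rank-$k$ corrections contribute at order one to $[I_{k2}]_{ii}$. They must be tracked into the limit, not only into the concentration bound.
\item \emph{Stein/Poincar\'e.} This is cleaner for the $O_p(n^{-1/2})$ bounds. It needs a smooth cutoff on $\{\|\bbD(\gamma_i)^{-1}\|\le C\}$, and a bound on $\|(\bbY\bbY^\top)^{-1}\|$ to control the derivative of $\bbP_y$; the paper tacitly needs the same cutoff when it takes expectations of $\Phi$.
\end{itemize}
If you replace the sign flips by the full invariance $\bbY\mapsto O\bbY$ with $O\in\mathbb R^{q\times q}$ orthogonal, which fixes $\bbP_y$ and hence your $\bbH(z)$, you get $\mathbb E[\bbY_1\bbH\bbY_1^\top]=q^{-1}\mathbb E\tr(\bbY\bbH\bbY^\top)\,\bbI_k$. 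This is precisely the paper's reduction.

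Step 5 is the thinnest part of your sketch. The quantity $q^{-1}\tr(\bbY\bbH(z)\bbY^\top)=(1-z)^2q^{-1}\tr(\bbY\bbW_2^\top\Phi^2\bbW_2\bbY^\top)$ is not a function of the paper's $\bbE$ and $\bbH$ alone. The paper obtains it by a separate cumulant expansion in $\bbY$.

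The same invariance gives it more cheaply. The singular values of $\bbY$ are independent of $\bbV_y$, and $\bbV_y$ is invariant under left rotations. Hence $\tr(\bbY\bbA\bbY^\top)=q^{-1}\tr(\bbY\bbY^\top)\tr(\bbA\bbP_y)+O_p(1)$ for any $\bbA$ of bounded norm depending on $\bbY$ only through $\bbP_y$. Taking $\bbA=\bbW_2^\top\Phi^2\bbW_2$ yields $[I_{k2}]_{ii}\to r_i(1-\gamma_i)^2c_2^{-1}(F_1(r_i)+\gamma_iF_2(r_i))=c_1(1-c_2)r_i(1-r_i)^2/(\eta r_i^2-\omega)$.

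This coincides with the bracket in the printed $D_2$ only when the coefficient of $F_2$ there is $c_1+c_2+2\omega(1-r_i)/r_i$. That plus sign is also what the paper's own expression $c_1+c_2-2(1-l_i)s(l_i)$ gives once $s(\gamma_i)$ is fixed by $QQ_1(\gamma_i)=F_1(r_i)$, namely $(1-\gamma_i)s(\gamma_i)=-\omega(1-r_i)/r_i$. So when you collect terms into $d(r_i)$, expect the plus sign, not the printed minus.
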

			
			Let $\mathcal{V}_{k1}$ and $\mathcal{V}_{k2}$ denote the centered $O_p(n^{-1/2})$ fluctuation terms of
			$[I_{k1}]_{ii}$ and $[I_{k2}]_{ii}$, respectively. More precisely, define
			\[
			[\mathcal{V}_{k1}]_{ii}:=[I_{k1}]_{ii}-\mathbb{E}\!\left([I_{k1}]_{ii}\mid \bbY,\bbU_2^\top\bbW\right),
			\qquad
			[\mathcal{V}_{k2}]_{ii}:=[I_{k2}]_{ii}-\mathbb{E}\!\left([I_{k2}]_{ii}\mid \bbY,\bbU_2^\top\bbW\right),
			\]
			so that $\mathbb{E}([\mathcal{V}_{k1}]_{ii}\mid \bbY,\bbU_2^\top\bbW)=\mathbb{E}([\mathcal{V}_{k2}]_{ii}\mid \bbY,\bbU_2^\top\bbW)=0$.
			The following lemma characterizes the limiting variances and covariances of all fluctuation terms.
			
			\begin{lemma}[Fluctuation of the normalization factor]\label{ccalemma2}
				Under the assumptions of Theorem \ref{th2}, let
				\[
				G_{i,n}
				:=
				\sqrt n\left([\bbM_i]_{ii}-1-d(r_i)\right).
				\]
				Then
				\[
				G_{i,n}
				\xrightarrow{D}
				\mathcal N(0,\sigma^2(r_i)).
				\]
				
				More precisely, let
				\[
				X_{1n}:=\sqrt n[\mathcal V_{k1}]_{ii},
				\qquad
				X_{2n}:=\sqrt n[\mathcal V_{k2}]_{ii},
				\qquad
				X_{3n}:=\sqrt n[I_{k3}+I_{k4}]_{ii}.
				\]
				Then
				\[
				(X_{1n},X_{2n},X_{3n})
				\]
				converges jointly in distribution to a centered Gaussian vector. Moreover,
				\[
				G_{i,n}
				=
				X_{1n}+X_{2n}+X_{3n}+o_p(1).
				\]
				The limiting variances are
				\[
				\operatorname{Var}(X_{1n})
				=
				A_1(r_i)+o(1),
				\]
				where
				\[
				\begin{aligned}
					A_1(r_i)
					=&2(1-r_i)^2P_1(r_i)
					-4\frac{\eta r_i-\omega}{r_i}(1-r_i)^3
					(P_1(r_i)+F_3(r_i))\\
					&+\frac{2(\eta r_i-\omega)^2}{r_i^2}(1-r_i)^4
					(P_1(r_i)+2F_3(r_i)+Q_3(r_i)),
				\end{aligned}
				\]
				\[
				\operatorname{Var}(X_{2n})
				=
				A_2(r_i)+o(1),
				\]
				where
				\[
				A_2(r_i)
				=
				\frac{2(\eta r_i-\omega)^4(1-r_i)^4}{r_i^2}
				\left(
				J_{k1}(r_i)+J_{k2}(r_i)+J_{k3}(r_i)+J_{k4}(r_i)+J_{k5}(r_i)
				\right),
				\]
				and
				\[
				\operatorname{Var}(X_{3n})
				=
				A_3(r_i)+o(1),
				\]
				where
				\[
				A_3(r_i)
				=
				\frac{4(\eta r_i-\omega)^3(1-r_i)^4Q_4(r_i)}{c_2r_i^2}
				+
				\frac{4c_1(\eta r_i-\omega)^2(1-r_i)^3Q_7(r_i)}{c_2r_i}.
				\]
				The scaled cross-covariances satisfy
				\[
				\operatorname{Cov}(X_{1n},X_{2n})=o(1),
				\qquad
				\operatorname{Cov}(X_{1n},X_{3n})=o(1),
				\qquad
				\operatorname{Cov}(X_{2n},X_{3n})=o(1).
				\]
				Therefore,
				\[
				\sigma^2(r_i)=A_1(r_i)+A_2(r_i)+A_3(r_i).
				\]
				The deterministic quantities
				\[
				P_1(r_i),\quad F_3(r_i),\quad Q_3(r_i),\quad
				J_{k\ell}(r_i),\ \ell=1,\ldots,5,\quad
				Q_4(r_i),\quad Q_7(r_i)
				\]
				are defined in Section \ref{ccalemma3proof}.
			\end{lemma}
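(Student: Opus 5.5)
The plan is to condition on $(\bbY,\bbU_2^\top\bbW)$ and exploit the fact that, for Gaussian $\bbW$ and orthogonal $\bbU=(\bbU_1,\bbU_2)$, the block $\bbZ_1:=\bbU_1^\top\bbW$ is independent of $\bbZ_2:=\bbU_2^\top\bbW$ and of $\bbY$. Write $\bbD=\bbZ_2(\bbP_y-l_i\bbI_n)\bbZ_2^\top$ and set
\[
\bbH:=(\bbP_y-l_i\bbI_n)\bbZ_2^\top\bbD^{-2}\bbZ_2(\bbP_y-l_i\bbI_n).
\]
With this notation,
\[
[I_{k1}]_{ii}=(1-r_i)\,\bbe_i^\top\bbZ_1\bbH\bbZ_1^\top\bbe_i,\qquad
[I_{k2}]_{ii}=r_i\,\bbe_i^\top\bbV_1^\top\bbY\bbH\bbY^\top\bbV_1\bbe_i,
\]
while $I_{k3}$ and $I_{k4}$ are bilinear, and hence linear, in the row $\bbz_i:=\bbZ_1^\top\bbe_i$.

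The first obstacle is the dependence on the random $l_i$. I would replace $l_i$ by $\gamma_i$ throughout. The difference $l_i-\gamma_i$ is $O_p(n^{-1/2})$ by the eigenvalue fluctuation theory of \cite{bao2019canonical}, so it contributes a term $\partial_z[\bbM_i(z)]_{ii}\,(l_i-\gamma_i)$, and this term is \emph{not} negligible at the $\sqrt n$ scale. The statement, however, records $G_{i,n}=X_{1n}+X_{2n}+X_{3n}+o_p(1)$ with $l_i$ inside the definitions. I therefore read $X_{jn}$ as being evaluated at $l_i$ itself, and I would verify that the centered parts (the conditional fluctuations) change only by $o_p(n^{-1/2})$ when $l_i\to\gamma_i$. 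This follows from a derivative bound on $\bbD(z)^{-1}$ uniformly near $\gamma_i$, and from the observation that the conditional expectations absorb the $l_i$-dependence.

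The remaining task is to show that the conditional means $\mathbb E([I_{k1}]_{ii}\mid\cdot)=(1-r_i)n^{-1}\tr\bbH$ and $\mathbb E([I_{k2}]_{ii}\mid\cdot)$, plus $1$, equal $1+d(r_i)+o_p(n^{-1/2})$. This is the point requiring care, since these are normalized traces of resolvent-type functionals of $(\bbY,\bbZ_2)$. The natural objects are $\tr\bbH$ and related traces, which I would write as $z$-derivatives of $\tr(\bbP_y-z)\bbZ_2^\top\bbD(z)^{-1}\bbZ_2$. For these I would apply Lemma \ref{lemma-cumulant} (Stein) to obtain self-consistent equations for the expectations, yielding $F_1$ and $F_2$. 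I would then use Lemma \ref{lemma-pi} to show that their variances are $O(n^{-2})$, which gives fluctuations $O_p(n^{-1})=o_p(n^{-1/2})$. The $\bbY$-dependent part of $I_{k2}$ needs a second conditioning: $\bbe_i^\top\bbV_1^\top\bbY$ is not independent of $\bbP_y$. I would handle this by rotating so that $\bbV_1\bbe_i$ is a coordinate vector and treating the distinguished row of $\bbY$ via a rank-one perturbation of $\bbP_y$.

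For the fluctuations, conditionally on $(\bbY,\bbZ_2)$, the variable $\bbz_i$ is $\mathcal N(0,n^{-1}\bbI_n)$, so $X_{1n}$ is a centered Gaussian quadratic form. Its conditional characteristic function is explicit, with conditional variance $2(1-r_i)^2 n^{-1}\tr\bbH^2$ times $n$. This already yields the structure of $A_1$, with $P_1$, $F_3$ and $Q_3$ arising from expanding $\bbH^2$ into traces of products involving $\bbP_y$. Likewise $X_{3n}$ is conditionally linear Gaussian in $\bbz_i$ with an explicit conditional variance, giving $A_3$. The cross term $\operatorname{Cov}(X_{1n},X_{3n})$ vanishes by odd symmetry in $\bbz_i$. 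The term $X_{2n}$ depends only on $(\bbY,\bbZ_2)$. Its CLT I would obtain via a martingale-difference decomposition over the columns of $(\bbY,\bbZ_2)$, or alternatively via Stein's method for the characteristic function, computing the variance $A_2$ through cumulant expansion. Joint convergence then follows by conditioning: the conditional characteristic function of $(X_{1n},X_{3n})$ is asymptotically deterministic Gaussian, and it factors against the $\sigma(\bbY,\bbZ_2)$-measurable $X_{2n}$. This also gives $\operatorname{Cov}(X_{1n},X_{2n})=\operatorname{Cov}(X_{2n},X_{3n})=o(1)$, because the conditional mean of $X_{1n}$ and of $X_{3n}$ is zero.

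I expect the main obstacle to be $X_{2n}$. It requires both the CLT and the explicit identification of the five contributions $J_{k1},\dots,J_{k5}$ arising from the correlated row of $\bbY$ and the projector $\bbP_y$. A secondary obstacle is establishing the $o_p(n^{-1/2})$ accuracy of the deterministic equivalents for the conditional means.
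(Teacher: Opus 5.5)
The argument breaks at the step where you yourself raise the alarm. You note that $\sqrt n\,\partial_z[\bbM_i(z)]_{ii}\,(l_i-\gamma_i)$ is of order one. Your plan then requires the opposite: that the conditional means, evaluated at $l_i$, equal $1+d(r_i)+o_p(n^{-1/2})$.

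Let $m(z)$ be the deterministic limit of $[\bbM_i(z)]_{ii}$. Then $[\bbM_i(l_i)]_{ii}-[\bbM_i(\gamma_i)]_{ii}=m'(\gamma_i)(l_i-\gamma_i)+o_p(n^{-1/2})$. Here $m'(\gamma_i)\neq 0$ in general, and $\sqrt n(l_i-\gamma_i)$ has a nondegenerate Gaussian limit \cite{bao2019canonical}. Your centered terms do not carry this increment, so that step cannot hold.

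Nor do the conditional expectations absorb it. Since $l_i$ depends on $\bbU_1^\top\bbW$, it is not $\sigma(\bbY,\bbU_2^\top\bbW)$-measurable. So your $(1-r_i)n^{-1}\tr\bbH(l_i)$ is not the conditional mean, and part of the eigenvalue fluctuation leaks into the centered terms and changes their variances.

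A consistent version evaluates $X_{1n},X_{2n},X_{3n}$ at the deterministic point $\gamma_i$ and adds $m'(\gamma_i)\sqrt n(l_i-\gamma_i)$ as a fourth component. The $i$-th row of $M_\Lambda(l_i)\boldsymbol{\varpi}_{i1}=0$, together with the off-diagonal bounds, gives $(M_\Lambda)_{ii}(l_i)=O_p(n^{-1})$. Then \eqref{eq:Mlambda-expansion} yields $\sqrt n(l_i-\gamma_i)=\mathcal R_{ii}/\Delta_{ii}+o_p(1)$. Since $\mathcal R_{ii}$ is built from the same quadratic forms in $\bbU_1^\top\bbW$ and $\bbV_1^\top\bbY$, this component is correlated with $X_{1n},X_{2n},X_{3n}$. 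Its variance and covariances must therefore enter $\sigma^2(r_i)$, and nothing suggests they cancel.

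The paper's proof does not handle this either. It inserts $l_i$ into the Bai--Yao variance formulas as though it were deterministic. It also asserts $G_{i,n}=X_{1n}+X_{2n}+X_{3n}+o_p(1)$ without proof; Lemma~\ref{ccalemma1} controls the means at $l_i$ only up to $O_p(n^{-1/2})$. So you have spotted a real issue the paper glosses over, but your proposed resolution does not work. As far as I can see, $\sigma^2(r_i)=A_1+A_2+A_3$ is reached only by ignoring this term.

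For $X_{2n}$, which you call the main obstacle, your bookkeeping is inconsistent, and you are missing the device that makes this term easy. At fixed $z$, $[I_{k2}(z)]_{ii}$ is $\sigma(\bbY,\bbU_2^\top\bbW)$-measurable, so the literal $\mathcal V_{k2}$ vanishes. You cannot then have both a conditional mean that is deterministic up to $o_p(n^{-1/2})$ and a nondegenerate $X_{2n}$.

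The paper instead writes $\bbY=\bbU_y\bLambda_y\bbV_y$ and observes that $\bbP_y=\bbV_y^\top\bbV_y$ does not involve $\bbU_y$. It replaces $\bbU_y$ by an independent Haar matrix. Then $\mathbf s:=\bbU_y^\top\bbV_1\bbe_i$ is uniform on the unit sphere of $\mathbb R^q$ and independent of $(\bLambda_y,\bbV_y,\bbW)$.

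Condition on $(\bLambda_y,\bbV_y,\bbU_2^\top\bbW)$ and set $\mathbf z_i:=\bbW^\top\bbU_1\bbe_i$. Then $[I_{k1}]_{ii}$ is a quadratic form in $\mathbf z_i$, and $[I_{k2}]_{ii}$ is a quadratic form in $\mathbf s$. Also $[I_{k3}+I_{k4}]_{ii}$ is a bilinear form in $(\mathbf z_i,\mathbf s)$, and all three have conditionally fixed coefficient matrices.

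Their conditional means are normalized traces, which concentrate at rate $O_p(n^{-1})$ by Poincar\'e. Joint Gaussianity follows from the Bai--Yao sesquilinear-form CLT \cite{2008baiyao} and Cram\'er--Wold. The variance of $X_{2n}$ is a trace functional of the $\mathbf s$-coefficient matrix, which the paper reduces by cumulant expansion to $J_{k1}+\dots+J_{k5}$.

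This removes the need for your rank-one perturbation of $\bbP_y$ and for a martingale or Stein CLT over columns. Your vanishing-covariance arguments (odd symmetry in $\mathbf z_i$, conditional centering) and your Stein/Poincar\'e route to the deterministic equivalents do match the paper's.
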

			
			 \subsection{Proofs of Main Results} 
			 We now prove the main theorems, assuming Lemmas \ref{ccalemma1} and \ref{ccalemma2} hold. 
			 
			 \begin{proof}[Proof of Theorem \ref{th1}]
			 	By Lemma \ref{ccalemma1},
			 	\[
			 	[\bbM_i]_{ii}
			 	=
			 	1+d(r_i)+O_p(n^{-1/2}).
			 	\]
			 	Together with the strengthened normalization reduction,
			 	\[
			 	\langle\bbu_i,\bbnu_i\rangle^2[\bbM_i]_{ii}
			 	=
			 	1+O_p(n^{-1}),
			 	\]
			 	we have
			 	\[
			 	\langle\bbu_i,\bbnu_i\rangle^2
			 	=
			 	\frac{1+O_p(n^{-1})}
			 	{1+d(r_i)+O_p(n^{-1/2})}.
			 	\]
			 	Since \(1+d(r_i)>0\), the continuous mapping theorem gives
			 	\[
			 	\langle\bbu_i,\bbnu_i\rangle^2
			 	\xrightarrow{p}
			 	\frac{1}{1+d(r_i)}.
			 	\]
			 	This proves Theorem \ref{th1}.
			 \end{proof}
			 
			 \begin{proof}[Proof of Theorem \ref{th2}]
			 	By Lemma \ref{ccalemma2}, we have
			 	\[
			 	[\bbM_i]_{ii}
			 	=
			 	1+d(r_i)+n^{-1/2}G_{i,n}+o_p(n^{-1/2}),
			 	\]
			 	where
			 	\[
			 	G_{i,n}\xrightarrow{D}\mathcal N(0,\sigma^2(r_i)).
			 	\]
			 	Using the strengthened normalization reduction,
			 	\[
			 	\langle\bbu_i,\bbnu_i\rangle^2[\bbM_i]_{ii}
			 	=
			 	1+O_p(n^{-1}),
			 	\]
			 	we obtain
			 	\[
			 	\langle\bbu_i,\bbnu_i\rangle^2
			 	=
			 	\frac{1+O_p(n^{-1})}
			 	{1+d(r_i)+n^{-1/2}G_{i,n}+o_p(n^{-1/2})}.
			 	\]
			 	Since \(1+d(r_i)>0\), a first-order Taylor expansion yields
			 	\[
			 	\langle\bbu_i,\bbnu_i\rangle^2
			 	=
			 	\frac{1}{1+d(r_i)}
			 	-
			 	\frac{n^{-1/2}G_{i,n}}
			 	{(1+d(r_i))^2}
			 	+
			 	o_p(n^{-1/2}).
			 	\]
			 	Therefore,
			 	\[
			 	\sqrt n
			 	\left(
			 	\langle\bbu_i,\bbnu_i\rangle^2
			 	-
			 	\frac{1}{1+d(r_i)}
			 	\right)
			 	=
			 	-
			 	\frac{G_{i,n}}{(1+d(r_i))^2}
			 	+
			 	o_p(1).
			 	\]
			 	Since
			 	\[
			 	G_{i,n}\xrightarrow{D}\mathcal N(0,\sigma^2(r_i)),
			 	\]
			 	Slutsky's theorem implies
			 	\[
			 	\sqrt n
			 	\left(
			 	\langle\bbu_i,\bbnu_i\rangle^2
			 	-
			 	\frac{1}{1+d(r_i)}
			 	\right)
			 	\xrightarrow{D}
			 	\mathcal N\left(
			 	0,
			 	\frac{\sigma^2(r_i)}{(1+d(r_i))^4}
			 	\right).
			 	\]
			 	This proves Theorem \ref{th2}.
			 \end{proof}
			 		
			 		\section{Proofs of Technical Lemmas}\label{prooflemma} 
			 		This section provides the proofs for the key technical lemmas, \ref{ccalemma1} and \ref{ccalemma2}. 
			 		
			 		\begin{proof}[Proof of Lemma \ref{ccalemma1}]
			 			Based on the decomposition in Section~\ref{zmcl}, write
			 			\[
			 			I_{k1}
			 			=
			 			\boldsymbol{\Gamma}_k \bbU_1^\top \bbW(\bbP_y-l_i\bbI_n)\bbW^\top \bbU_2
			 			\Phi^2(l_i)
			 			\bbU_2^\top \bbW(\bbP_y-l_i\bbI_n)\bbW^\top \bbU_1
			 			\boldsymbol{\Gamma}_k,
			 			\]
			 			where
			 			\[
			 			\Phi(z)=
			 			\left(
			 			\bbU_2^\top\bbW(\bbP_y-z\bbI_n)\bbW^\top\bbU_2
			 			\right)^{-1}.
			 			\]
			 			Conditional on \(\bbY\) and \(\bbU_2^\top\bbW\), the rows of \(\bbU_1^\top\bbW\) are independent centered Gaussian vectors with covariance \(n^{-1}\bbI_n\). Hence
			 			\[
			 			\mathbb E\!\left(I_{k1}\mid \bbY,\bbU_2^\top\bbW\right)
			 			=
			 			\frac{1}{n}
			 			\tr\!\left[
			 			(\bbP_y-l_i\bbI_n)\bbW^\top\bbU_2
			 			\Phi^2(l_i)
			 			\bbU_2^\top\bbW(\bbP_y-l_i\bbI_n)
			 			\right]
			 			\boldsymbol{\Gamma}_k\boldsymbol{\Gamma}_k^\top .
			 			\]
			 			Define
			 			\[
			 			\mathcal V_{k1}
			 			:=
			 			I_{k1}
			 			-
			 			\mathbb E\!\left(I_{k1}\mid \bbY,\bbU_2^\top\bbW\right).
			 			\]
			 			Then
			 			\[
			 			I_{k1}
			 			=
			 			\frac{1}{n}
			 			\tr\!\left[
			 			(\bbP_y-l_i\bbI_n)\bbW^\top\bbU_2
			 			\Phi^2(l_i)
			 			\bbU_2^\top\bbW(\bbP_y-l_i\bbI_n)
			 			\right]
			 			\boldsymbol{\Gamma}_k\boldsymbol{\Gamma}_k^\top
			 			+\mathcal V_{k1}.
			 			\]
			 			Under the Gaussian assumption on the entries of $\bbW$ and the orthogonality of
			 			$\bbU_1$ and $\bbU_2$, it follows that $\bbU_1^\top \bbW$ and $\bbU_2^\top \bbW$
			 			are independent. Moreover, by the proof of Theorem~1.1 in Bai and Silverstein (2004)
			 			\cite{BS04CLT}, we have $\mathcal{V}_{k1}=O_p(n^{-1/2})$. Throughout the proof, we adopt the standard normalization that the entries of $\bbW$ are i.i.d. $N(0,1/n)$.
			 			
			 			Let $\Phi(z)=\left( \bbU_2^\top \bbW(\bbP_y\!-\!z\bbI_n)\bbW^\top \bbU_2\right) ^{-1}$.
			 			Note that
			 			$n\bbE=n\bbU_2^\top \bbW\bbP_y\bbW^\top \bbU_2\sim\mathcal{W}_{p-k}(\bbI_{p-k},q)$
			 			and
			 			$n\bbH=n\bbU_2^\top \bbW(\bbI-\bbP_y)\bbW^\top \bbU_2\sim\mathcal{W}_{p-k}(\bbI_{p-k},n-q)$.
			 			Hence $\Phi(z)=\left( (1-z)\bbE-z\bbH\right) ^{-1}$. Consequently,
			 			\begin{align*}
			 				&\frac{1}{n}\text{tr}(\bbP_y\!-\!l_i)\bbW^\top \bbU_2\left( \bbU_2^\top \bbW(\bbP_y\!-\!l_i)\bbW^\top \bbU_2\right) ^{-2}\bbU_2^\top \bbW(\bbP_y\!-\!l_i)\\
			 				=&(1\!-\!l_i)^2\frac{1}{n}\text{tr} \bbE\Phi^2(l_i)\!+\!l_i^2\frac{1}{n}\text{tr}\bbH\Phi^2(l_i).
			 			\end{align*}
			 			Differentiating with respect to $z$ yields
			 			\begin{equation*}
			 				\left\{
			 				\begin{aligned}
			 					\left( \frac{1}{n}\text{tr}\Phi(l_i)\right) '&=\frac{1}{n}\text{tr} \Phi(l_i)(\bbE+\bbH)\Phi(l_i),\\
			 					\frac{1}{n}\text{tr}\Phi(l_i)	&=\frac{1-l_i}{n}\text{tr}\bbE\Phi^2-\frac{l_i}{n}\text{tr}\bbH\Phi^2(l_i),\\
			 				\end{aligned}
			 				\right.
			 			\end{equation*}
			 			that is,
			 			\begin{equation*}
			 				\left\{
			 				\begin{aligned}
			 					\frac{1}{n}\text{tr}\bbE\Phi^2(l_i)&=\frac{1}{n}\text{tr} \Phi(l_i)+l_i\left( \frac{1}{n}\text{tr} \Phi(l_i)\right) ',\\
			 					\frac{1}{n}\text{tr}\bbH\Phi^2(l_i)&=(1-l_i)\left( \frac{1}{n}\text{tr}\Phi(l_i)\right) '-\frac{1}{n}\text{tr}\Phi(l_i).\\
			 				\end{aligned}
			 				\right.
			 			\end{equation*}
			 			Therefore, it suffices to estimate the two key quantities
			 			$n^{-1}\text{tr}\Phi(l_i)$ and $\bigl(n^{-1}\text{tr}\Phi(l_i)\bigr)'$.
			 			
			 			Let $\mathcal{X}_{(p-k)\times q}$ and $\mathcal{Y}_{(p-k)\times(n-q)}$ be independent
			 			matrices with i.i.d.\ standard normal entries. Since
			 			\[
			 			\Phi(z)=\left((1-z)\bbE-z\bbH\right)^{-1}
			 			=
			 			n\left((1-z)n\bbE-zn\bbH\right)^{-1},
			 			\]
			 			we have
			 			\[
			 			\frac{1}{n}\tr\Phi(z)
			 			=
			 			\frac{1}{1-z}
			 			\tr\left(
			 			n\bbE-\frac{z}{1-z}n\bbH
			 			\right)^{-1}.
			 			\]
			 			Therefore,
			 			\[
			 			\frac{1}{n}\tr\Phi(z)
			 			\overset{d}{=}
			 			\frac{1}{1-z}
			 			\tr\left(
			 			\mathcal X\mathcal X^\top
			 			-
			 			\frac{z}{1-z}\mathcal Y\mathcal Y^\top
			 			\right)^{-1}.
			 			\]
			 			Applying the Poincar\'e inequality in Lemma~\ref{lemma-pi}, we obtain the variance bound
			 			\begin{align}\label{pi01}
			 				&\operatorname{Var}(\text{tr}\Phi(z))\notag\\
			 				\leq &\mathbb{E}\frac{1}{n}\sum_{i=1}^{p-k}\sum_{j=1}^{n-q}\left[\frac{\partial \text{tr}\Phi(z)}{\partial \mathcal{y}_{ij}}\right]^2
			 				+\mathbb{E}\frac{1}{n}\sum_{i=1}^{p-k}\sum_{j=1}^q\left[\frac{\partial \text{tr}\Phi(z)}{\partial \mathcal{x}_{ij}}\right]^2
			 				=O\left( \frac{p+q}{n}\right) .
			 			\end{align}
			 			The Poincaré inequality gives
			 			\[
			 			\operatorname{Var}(\tr\Phi(z))=O(1),
			 			\]
			 			uniformly for \(z\) in a compact set away from the limiting spectral support. Hence
			 			\[
			 			\frac{1}{n}\tr\Phi(z)
			 			-
			 			\mathbb E\frac{1}{n}\tr\Phi(z)
			 			=
			 			O_p(n^{-1}).
			 			\]
			 			Combining this with the cumulant expansion formula in Lemma~\ref{lemma-cumulant},
			 			we obtain
			 			\begin{align*}
			 				&\mathbb{E}\text{tr}(\mathcal{X}\mathcal{X}^\top (\mathcal{Y}\mathcal{Y}^\top )^{-1}-z\bbI_{p-k})^{-1}\\[0.5em]
			 				=&\mathbb{E}\text{tr} \mathcal{Y}\mathcal{Y}^\top (\mathcal{X}\mathcal{X}^\top -z\mathcal{Y}\mathcal{Y}^\top )^{-1}\\[0.5em]
			 				=&\mathbb{E}\frac{1}{n}
			 				\sum_{a=1}^{p-k}\sum_{b=1}^{n-q}
			 				y_{ab}
			 				\left[
			 				\mathcal Y^\top
			 				(\mathcal X\mathcal X^\top-z\mathcal Y\mathcal Y^\top)^{-1}
			 				\right]_{ba}\\[0.5em]
			 				=&\frac{n-q}{n}\mathbb{E}\text{tr}(\mathcal{X}\mathcal{X}^\top \!-\!z\mathcal{Y}\mathcal{Y}^\top )^{-1}
			 				\!+\!\frac{z}{n}\mathbb{E}\text{tr}(\mathcal{X}\mathcal{X}^\top (\mathcal{Y}\mathcal{Y}^\top )^{-1}\!-\!z\bbI_{p-k})^{-1}\text{tr}(\mathcal{X}\mathcal{X}^\top \!-\!z\mathcal{Y}\mathcal{Y}^\top )^{-1}\!+\!O(1)\\[0.5em]
			 				=&\frac{n-q}{n}\mathbb{E}\text{tr}(\mathcal{X}\mathcal{X}^\top \!\!-\!\!z\mathcal{Y}\mathcal{Y}^\top )^{-1}
			 				\!+\!\frac{z}{n}\mathbb{E}\text{tr}(\mathcal{X}\mathcal{X}^\top (\mathcal{Y}\mathcal{Y}^\top )^{-1}\!\!-\!z\bbI_{p-k})^{-1}\mathbb{E}\text{tr}(\mathcal{X}\mathcal{X}^\top \!\!-\!\!z\mathcal{Y}\mathcal{Y}^\top )^{-1}\!+\!O(1),
			 			\end{align*}
			 			where the last equality follows from the Cauchy--Schwarz inequality.
			 			Consequently,
			 			\begin{align*}
			 				&\left| \mathbb{E}\frac{1}{n}\text{tr}(\mathcal{X}\mathcal{X}^\top (\mathcal{Y}\mathcal{Y}^\top )^{-1}-z\bbI)^{-1}\text{tr}(\mathcal{X}\mathcal{X}^\top -z\mathcal{Y}\mathcal{Y}^\top )^{-1}\right.\\[0.5em]
			 				&~~~~~~~~~~~~~~~~~~~~~~~~~~\left.-\mathbb{E}\frac{1}{n}\text{tr}(\mathcal{X}\mathcal{X}^\top (\mathcal{Y}\mathcal{Y}^\top )^{-1}-z\bbI)^{-1}\mathbb{E}\text{tr}(\mathcal{X}\mathcal{X}^\top -z\mathcal{Y}\mathcal{Y}^\top )^{-1}\right| \\[0.5em]
			 				=&\left| \mathbb{E}(1-\mathbb{E})\frac{1}{n}\text{tr}(\mathcal{X}\mathcal{X}^\top (\mathcal{Y}\mathcal{Y}^\top )^{-1}-z\bbI)^{-1}(1-\mathbb{E})\text{tr}(\mathcal{X}\mathcal{X}^\top -z\mathcal{Y}\mathcal{Y}^\top )^{-1}\right| \\[0.5em]
			 				\leq&\sqrt{\mathbb{E}\left| (1-\mathbb{E})\frac{1}{n}\text{tr}(\mathcal{X}\mathcal{X}^\top (\mathcal{Y}\mathcal{Y}^\top )^{-1}-z\bbI)^{-1}\right| ^2}
			 				\sqrt{\mathbb{E}\left| (1-\mathbb{E})\text{tr}(\mathcal{X}\mathcal{X}^\top -z\mathcal{Y}\mathcal{Y}^\top )^{-1}\right| ^2}\\[0.5em]
			 				\leq &O\left( \frac{1}{n}\right) .
			 			\end{align*}
			 			Here the replacement of $\mathbb{E}(\tr \bbA\,\tr \bbB)$ by $\mathbb{E}\tr \bbA\,\mathbb{E}\tr \bbB$ introduces an $O(1)$ error,
			 			since by the Poincar\'e inequality both $\operatorname{Var}(\tr \bbA)$ and $\operatorname{Var}(\tr B)$ are uniformly bounded, and hence the covariance is $O(1)$.
			 			After dividing by $n$, this error becomes $O(1/n)$.
			 			
			 			It follows that
			 			\begin{align*}
			 				\mathbb{E}\text{tr}(\mathcal{X}\mathcal{X}^\top -z\mathcal{Y}\mathcal{Y}^\top )^{-1}
			 				=\frac{\mathbb{E}\text{tr}(\mathcal{X}\mathcal{X}^\top (\mathcal{Y}\mathcal{Y}^\top )^{-1}-z\bbI)^{-1}}
			 				{\frac{n-q}{n}+\frac{z}{n}\mathbb{E}\text{tr}(\mathcal{X}\mathcal{X}^\top (\mathcal{Y}\mathcal{Y}^\top )^{-1}-z\bbI)^{-1}}+O(1),
			 			\end{align*}
			 			and hence
			 			\begin{align}
			 				\frac{1}{n}\text{tr}\Phi(z)=&\frac{1}{1-z}\frac{\frac{1}{n}\mathbb{E}\text{tr}\left( \mathcal{X}\mathcal{X}^\top (\mathcal{Y}\mathcal{Y}^\top )^{-1}-\frac{z}{1-z}\bbI\right)^{-1} }{\frac{n-q}{n}+\frac{z}{1-z}\frac{1}{n}\mathbb{E}\text{tr}\left( \mathcal{X}\mathcal{X}^\top (\mathcal{Y}\mathcal{Y}^\top )^{-1}-\frac{z}{1-z}\bbI\right) ^{-1}}+O_{p}\left( \frac{1}{n}\right) \nonumber\\[0.5em]
			 				=&\frac{1}{z}\frac{-c_1+(1-z)s(z)}{1-c_2-c_1+(1-z)s(z)}+O_{p}\left( \frac{1}{n}\right) \nonumber\\[0.5em]
			 				:=&QQ_1(z)+O_p\left( \frac{1}{n}\right).\label{phi}
			 			\end{align}
			 			Since $n^{-1}\tr\Phi(z)$ is analytic on $\mathbb{C}\setminus\mathbb{R}$ and converges to its deterministic limit
			 			uniformly on compact subsets of a neighborhood of $l_i$ (by standard resolvent convergence results for the Fisher-type matrix),
			 			Cauchy's integral formula implies that the derivatives also converge, and thus the limit and differentiation can be interchanged:
			 			\begin{align*}
			 				\lim_{n\to\infty}\frac{\partial}{\partial z}\mathbb{E}\frac{1}{n}\text{tr}\Phi(z)
			 				=\frac{\partial}{\partial z}\lim_{n\to\infty}\mathbb{E}\frac{1}{n}\text{tr}\Phi(z).
			 			\end{align*}
			 			Therefore,
			 			\begin{align}
			 				\left( \frac{1}{n}\text{tr}\Phi(z)\right) '=&\frac{d}{dz}\left( \frac{1}{z}\frac{-c_1+(1-z)s(z)}{1-c_2-c_1+(1-z)s(z)}\right) +O_p\left( \frac{1}{\sqrt{n}}\right) \nonumber\\[0.5em]
			 				=&\frac{1}{z^2}\frac{c_1-(1-z)s(z)}{1-c_1-c_2+(1-z)s(z)}
			 				\!+\!\frac{(1-c_2)(-s(z)+(1-z)s'(z))}{z(1-c_1-c_2+(1-z)s(z))^2}
			 				\!+\!O_p\left( \frac{1}{\sqrt{n}}\right) \nonumber\\[0.5em]
			 				:=&QQ_2(z)+O_p\left( \frac{1}{\sqrt{n}}\right).\label{phi'}
			 			\end{align}
			 			The above resolvent estimates hold uniformly for \(z\) in a fixed neighborhood of \(\gamma_i\) that stays away from the limiting spectral support. Since \(r_i\) is spike, $l_i-\gamma_i=O_p(n^{-1/2})$.
			 			Therefore, by the resolvent identity and Taylor expansion of the deterministic limits, these estimates may be evaluated at the random point \(z=l_i\), with an additional error of order \(O_p(n^{-1/2})\). Consequently, combining
			 			\eqref{phi} and \eqref{phi'} yields
			 			\begin{align}
			 				QQ_1(l_i)=&\frac{-c_1r_i((1-c_2)r_i+c_2)}{r_i(\eta r_i-\omega)-(\eta r_i-\omega)^2(1-r_i)}+O_p\left( \frac{1}{\sqrt{n}}\right)\notag\\[0.5em]
			 				:=&F_1(r_i)+O_p\left( \frac{1}{\sqrt{n}}\right), \label{QQ1}\\[0.5em]
			 				QQ_2(l_i)=& \frac{
			 					r_i^2
			 					\left(
			 					\omega(1-c_2)r_i\big((1-c_1)r_i+c_1\big)
			 					+
			 					c_1(\eta r_i-\omega)(\eta r_i^2-\omega)
			 					\right)
			 				}
			 				{
			 					\big((1-c_1)r_i+c_1\big)^2
			 					\big((1-c_2)r_i+c_2\big)
			 					(\eta r_i-\omega)^2
			 					(\eta r_i^2-\omega)}
			 					+O_p\left( \frac{1}{\sqrt{n}}\right)\notag\\[0.5em]
			 				:=&F_2(r_i)+O_p\left( \frac{1}{\sqrt{n}}\right), \label{QQ2}\\[0.5em]
			 				\frac{1}{n}\text{tr}\bbE\Phi^2(l_i)=&\frac{\omega(1-c_2)r_i^2}{(\eta r_i-\omega)^2(\eta r_i^2-\omega)}+O_p\left( \frac{1}{\sqrt{n}}\right) ,\label{EP2}\\[0.5em]
			 				\frac{1}{n}\text{tr}\bbH\Phi^2(l_i)=&\frac{c_1r_i^2}{(\eta r_i^2-\omega)((1-c_1)r_i+c_1)^2}+O_p\left( \frac{1}{\sqrt{n}}\right) .\label{HP2}
			 			\end{align}
			 			Here $\eta=(1-c_1)(1-c_2)$ and $\omega=c_1c_2$.
			 			Substituting these expansions into $I_{k1}$ yields
			 			\begin{align}
			 				I_{k1}=&\left( (1\!-\!l_i)^2\frac{1}{n}\text{tr} \bbE\Phi^2(l_i)\!+\!l_i^2\frac{1}{n}\text{tr}\bbH\Phi^2(l_i)\right) \boldsymbol{\Gamma}_k\boldsymbol{\Gamma}_k^\top +\mathcal{V}_{k1}\notag\\
			 				=&\left( (1-2l_i)\frac{1}{n}\text{tr}\Phi(l_i)+l_i(1-l_i)\left( \frac{1}{n}\text{tr}\Phi(l_i)\right)'\right) \boldsymbol{\Gamma}_k\boldsymbol{\Gamma}_k^\top +\mathcal{V}_{k1}\notag\\
			 				=&\left( (1-2l_i)QQ_1(l_i)+l_i(1-l_i)QQ_2(l_i)\right) \times\boldsymbol{\Gamma}_k\boldsymbol{\Gamma}_k^\top +O_p\left( \frac{1}{\sqrt{n}}\right).\label{rIk1}
			 			\end{align}
			 			
			 			We next analyze $I_{k2}$. Let $\bbY=\bbU_y\boldsymbol{\Lambda}_y\bbV_y$.
			 			Following the treatment in Bao et al. (2019) \cite{bao2019canonical}, since $\bbY$ has i.i.d. Gaussian entries, in its SVD the left singular vectors $\bbU_y$ are Haar distributed on the orthogonal group and are independent of $(\boldsymbol{\Lambda}_y,\bbV_y)$. 
			 			Let $\tilde{\bbU}_y$ be an independent Haar orthogonal matrix with the same dimension as $\bbU_y$. 
			 			Then replacing $\bbU_y$ by $\tilde{\bbU}_y$ does not change the distribution of $I_{k2},I_{k3},I_{k4}$. Accordingly, $I_{k2}$ can be decomposed as
			 			\begin{align}
			 				I_{k2}=&\boldsymbol{\Lambda}_k\bbV_1^\top \tilde{\bbU}_y\boldsymbol{\Lambda}_y\bbV_y^\top (\bbP_y\!-\!l_i\bbI_n)\bbW^\top \bbU_2(\bbU_2^\top \bbW(\bbP_y\!-\!l_i\bbI_n)\bbW^\top \bbU_2)^{-2}\notag\\
			 				&\times \bbU_2^\top \bbW(\bbP_y\!-\!l_i\bbI_n)\bbV_y\boldsymbol{\Lambda}_y\tilde{\bbU}_y^\top \bbV_1\boldsymbol{\Lambda}_k+O_p\left( \frac{\log n}{n}\right) \nonumber\\
			 				=&\frac{1}{q}\text{tr}\boldsymbol{\Lambda}_y\bbV_y^\top (\bbP_y\!-\!l_i\bbI_n)\bbW^\top\bbU_2(\bbU_2^\top \bbW(\bbP_y\!-\!l_i\bbI_n)\bbW^\top \bbU_2)^{-2}\notag\\
			 				&\times \bbU_2^\top \bbW(\bbP_y\!-\!l_i\bbI_n)\bbV_y\boldsymbol{\Lambda}_y\times\boldsymbol{\Lambda}_k\boldsymbol{\Lambda}_k^\top +O_p\left( \frac{\log n}{n}\right) \nonumber\\
			 				:=&(1-l_i)^2\frac{1}{q}\text{tr}\bbY\bbW^\top \bbU_2\Phi(l_i)^{2}\bbU_2^\top \bbW\bbY^\top\times\boldsymbol{\Lambda}_k\boldsymbol{\Lambda}_k^\top +\mathcal{V}_{k2}+O_p\left( \frac{\log n}{n}\right) .\label{Vk2}
			 			\end{align}
			 			By the Poincar\'e inequality, the variance of
			 			the normalized trace $q^{-1}\text{tr}\bbY\bbW^\top \bbU_2\Phi(l_i)^{2}\bbU_2^\top \bbW\bbY^\top$ is uniformly bounded,
			 			which is analogous to \eqref{pi01}. Using the cumulant expansion, we obtain
			 			\begin{align*}
			 				&\mathbb{E}\text{tr}\bbY\bbW^\top \bbU_2\Phi(l_i)^{2}\bbU_2^\top \bbW\bbY^\top\\
			 				=&\mathbb{E}\frac{q}{n}\text{tr}(\bbE+\bbH)\Phi^2(l_i)\!-\!\mathbb{E}\frac{1}{n}\left(\text{tr}\bbH\Phi(l_i)\text{tr}\bbE\Phi^2(l_i)\!+\!\text{tr}\bbE\Phi(l_i)\text{tr}\bbH\Phi^2(l_i)\right)\!+\!O\left(\frac{q}{n}\right),
			 			\end{align*}
			 			and hence
			 			\begin{align*}
			 				&\mathbb{E}\frac{1}{n}\text{tr}\bbY\bbW^\top \bbU_2\Phi(l_i)^{2}\bbU_2^\top \bbW\bbY^\top\\
			 				=&\mathbb{E}\frac{q}{n}\frac{1}{n}\text{tr}(\bbE+\bbH)\Phi^2(l_i)
			 				\!-\!\mathbb{E}\frac{1}{n}\text{tr}\bbH\Phi(l_i)\frac{1}{n}\text{tr}\bbE\Phi^2(l_i)
			 				\!-\!\mathbb{E}\frac{1}{n}\text{tr}\bbE\Phi(l_i)\frac{1}{n}\text{tr}\bbH\Phi^2(l_i)
			 				\!+\!O\left(\frac{q}{n^2}\right)\\
			 				=&\mathbb{E}\frac{q}{n}\frac{1}{n}\text{tr}(\bbE+\bbH)\Phi^2(l_i)
			 				\!-\!\mathbb{E}\frac{1}{n}\text{tr}\bbH\Phi(l_i)\mathbb{E}\frac{1}{n}\text{tr}\bbE\Phi^2(l_i)
			 				\!-\!\mathbb{E}\frac{1}{n}\text{tr}\bbE\Phi(l_i)\mathbb{E}\frac{1}{n}\text{tr}\bbH\Phi^2(l_i)
			 				\!+\!O\left(\frac{q}{n^2}\right).
			 			\end{align*}
			 			By the same argument as for $I_{k1}$, we further obtain
			 			\begin{align*}
			 				&\frac{1}{n}\text{tr}\bbY\bbW^\top \bbU_2\Phi(l_i)^{2}\bbU_2^\top \bbW\bbY\\[0.5em]
			 				=&\left(\frac{c_1}{l_i}-\frac{1-l_i}{l_i}s(l_i)+s(l_i)\right)\frac{1}{n}\text{tr}\Phi(l_i)\\[0.5em]
			 				&+\left( c_1+c_2-2(1-l_i)s(l_i)\right) \left(\frac{1}{n}\text{tr}\Phi(l_i)\right)'+O_p\left(\frac{1}{\sqrt{n}}\right)\\[0.5em]
			 				=&\left( \frac{c_1}{l_i}-\frac{1-l_i}{l_i}s(l_i)+s(l_i)\right) QQ_1(l_i)
			 				+\left( c_1+c_2-2(1-l_i)s(l_i)\right) QQ_2(l_i)+O_p\left(\frac{1}{\sqrt{n}}\right).
			 			\end{align*}
			 			Therefore, the asymptotic expansion of $I_{k2}$ is
			 			\begin{align}
			 				I_{k2}=&\frac{(1-l_i)^2}{c_2}\left( \frac{c_1-(1-2l_i)s(l_i)}{l_i} QQ_1(l_i)
			 				+\left( c_1+c_2-2(1-l_i)s(l_i)\right) QQ_2(l_i)\right)\notag\\
			 				&\times\boldsymbol{\Lambda}_k\boldsymbol{\Lambda}_k^\top  +O_p\left(\frac{1}{\sqrt{n}}\right).
			 			\end{align}
			 			
			 			For the cross terms $I_{k3}$ and $I_{k4}$, conditional on $\bbU_2^\top \bbW$ and $\bbY$,
			 			they can be viewed as linear combinations of Gaussian random variables.
			 			Using the boundedness of the operator norm,
			 			\begin{align*}
			 				\left\| \bbV_1^\top \bbY(\bbP_y\!-\!l_i)\bbW^\top \bbU_2\Phi(l_i)^{2}\bbU_2^\top \bbW(\bbP_y\!-\!l_i)^2\bbW^\top \bbU_2\Phi(l_i)^{2}\bbU_2^\top \bbW(\bbP_y\!-\!l_i)\bbY^\top \bbV_1\right\| =O(1),
			 			\end{align*}
			 			Since \(k\) is fixed, this gives the entrywise bounds
			 			\[
			 			[I_{k3}]_{ab}=O_p(n^{-1/2}),
			 			\qquad
			 			[I_{k4}]_{ab}=O_p(n^{-1/2}),
			 			\qquad 1\le a,b\le k.
			 			\]
			 			In particular,
			 			\[
			 			[I_{k3}]_{ii}=O_p(n^{-1/2}),
			 			\qquad
			 			[I_{k4}]_{ii}=O_p(n^{-1/2}).
			 			\]
			 			%we conclude that $I_{k3}=O_p(n^{-1/2})$ and $I_{k4}=O_p(n^{-1/2})$.
			 			
			 			It remains to control the off-diagonal entries. We explain the argument for \(I_{k1}\), the other terms are handled in the same way. Conditional on \(\bbY\) and \(\bbU_2^\top\bbW\), the rows of \(\bbU_1^\top\bbW\) are independent centered Gaussian vectors and are independent of \(\bbU_2^\top\bbW\). Hence, for \(a\ne b\),
			 			\[
			 			\mathbb E\left([I_{k1}]_{ab}\mid \bbY,\bbU_2^\top\bbW\right)=0.
			 			\]
			 			Moreover, by the resolvent bound \(\|\bbD^{-1}\|=O_p(1)\) and the fact that \(k\) is fixed, the corresponding conditional variance satisfies
			 			\[
			 			\operatorname{Var}\left([I_{k1}]_{ab}\mid \bbY,\bbU_2^\top\bbW\right)=O_p(n^{-1}).
			 			\]
			 			Therefore,
			 			\[
			 			[I_{k1}]_{ab}=O_p(n^{-1/2}),\qquad a\ne b.
			 			\]
			 			
			 			For \(I_{k2}\), after the Haar replacement of the left singular vectors of \(\bbY\), the off-diagonal entries have conditional mean zero and conditional variance \(O_p(n^{-1})\). Thus
			 			\[
			 			[I_{k2}]_{ab}=O_p(n^{-1/2}),\qquad a\ne b.
			 			\]
			 			Similarly, the cross terms \(I_{k3}\) and \(I_{k4}\) are conditionally centered Gaussian bilinear forms with conditional variances of order \(n^{-1}\). Hence
			 			\[
			 			[I_{k3}]_{ab}=O_p(n^{-1/2}),
			 			\qquad
			 			[I_{k4}]_{ab}=O_p(n^{-1/2}),
			 			\qquad a\ne b.
			 			\]
			 			Combining these bounds gives
			 			\[
			 			[\bbM_i]_{ab}=O_p(n^{-1/2}),\qquad a\ne b.
			 			\]
			 			Since \(k\) is fixed and the diagonal entries of \(\bbM_i\) are \(O_p(1)\), it follows that
			 			\[
			 			\|\bbM_i\|=O_p(1).
			 			\]
			 			This completes the proof of Lemma~\ref{ccalemma1}.
			 		\end{proof}

		 			\begin{proof}[Proof of Lemma \ref{ccalemma2}] 
		 			
		 			We first justify the joint Gaussian convergence of the fluctuation terms. By the Cramér--Wold device, it is enough to consider an arbitrary fixed linear combination
		 			\[
		 			T_n
		 			=
		 			a_1X_{1n}+a_2X_{2n}+a_3X_{3n},
		 			\qquad a_1,a_2,a_3\in\mathbb R.
		 			\]
		 			The terms \(X_{1n}\), \(X_{2n}\), and \(X_{3n}\) can be represented, up to negligible errors, as centered quadratic and bilinear forms with random coefficient matrices that are independent of the leading Gaussian variables after conditioning on \(\bbY\) and \(\bbU_2^\top\bbW\). These coefficient matrices are uniformly bounded in operator norm with probability tending to one, and the normalized trace quantities appearing in their variance formulas have deterministic limits. Therefore, by the central limit theorem for random sesquilinear forms in Bai and Yao (2008) \cite{2008baiyao}, every fixed linear combination of \(X_{1n}\), \(X_{2n}\), and \(X_{3n}\) has an asymptotically normal distribution. Since the coefficients in the linear combination are arbitrary, the Cramér--Wold device implies the joint Gaussian convergence of \((X_{1n},X_{2n},X_{3n})\). 
		 			
		 			It remains to compute the limiting variances and covariances, which determine the variance of the final linear combination appearing in \(G_{i,n}\). These are obtained as follows. We next study the fluctuation behavior of the $i$-th diagonal entry
		 			$[\bbM_i]_{jj}$.
		 			Since the entries of the random matrices $\mathcal{V}_{k1}$ and $\mathcal{V}_{k2}$
		 			are quadratic forms, their variances can be derived from Bai and Yao (2008) \cite{2008baiyao}:
		 			\begin{align*}
		 				\operatorname{Var}([\sqrt{n}\mathcal{V}_{k1}]_{jj})=&\mathbb{E}\frac{2(1-r_i)^2}{n}\text{tr}(\bbP_y\!-\!l_i\bbI_n)\bbW^\top \bbU_2\Phi^{2}(l_i)\bbU_2^\top \bbW(\bbP_y\!-\!l_i\bbI_n)^2\\
		 				&\times\bbW^\top \bbU_2\Phi^{2}(l_i)\bbU_2^\top \bbW(\bbP_y\!-\!l_i\bbI_n),\\
		 				\operatorname{Var}([\sqrt{n}\mathcal{V}_{k2}]_{jj})=&\mathbb{E}(1-l_i)^4\frac{2r_i^2}{q}\text{tr}\bbY\bbW^\top \bbU_2\Phi^2(l_i)\bbU_2^\top \bbW\bbY^\top \bbY\bbW^\top \bbU_2\Phi^2(l_i)\bbU_2^\top \bbW\bbY^\top .
		 			\end{align*}
		 			Using the matrix identity $(\bbE+\bbH)\Phi(z)=z^{-1}(\bbE\Phi-\bbI_{p-k})$,
		 			we may rewrite the variance of $\mathcal{V}_{k1}$ into the limiting forms of three expectation terms:
		 			\begin{align}
		 				\operatorname{Var}([\sqrt{n}\mathcal{V}_{k1}]_{jj})=&\mathbb{E}\frac{2(1-r_i)^2}{n}\text{tr}\Phi^{2}(l_i)\bbU_2^\top \bbW(\bbP_y\!-\!l_i\bbI_n)^2\bbW^\top \bbU_2\Phi^{2}(l_i)\bbU_2^\top \bbW(\bbP_y\!-\!l_i\bbI_n)^2\bbW^\top \bbU_2\nonumber\\[0.5em]
		 				=&\mathbb{E}\frac{2(1-r_i)^2}{n}\text{tr}\Phi^{2}(l_i)((1-l_i)^2\bbE+l_i^2\bbH)\Phi^{2}(l_i)((1-l_i)^2\bbE+l_i^2\bbH)\nonumber\\[0.5em]
		 				=&\mathbb{E}\frac{2(1-r_i)^2}{n}\text{tr}((1-l_i)\bbE\Phi(l_i)-l_i\bbI)\Phi(l_i)((1-l_i)\bbE\Phi(l_i)-l_i\bbI)\Phi(l_i)\nonumber\\[0.5em]
		 				=&2(1\!-\!r_i)^2\left( (1\!-\!l_i)^2\mathbb{E}\frac{1}{n}\text{tr}\bbE\Phi^2(l_i)\bbE\Phi^2(l_i)\!\right.\notag\\[0.5em]
		 				&~~~~~~~~~~~~~~~~~~~~~~~~~~\left.-\!2(1\!-\!l_i)l_i\mathbb{E}\frac{1}{n}\text{tr}\bbE\Phi^3(l_i)\!+\!\mathbb{E}\frac{l_i^2}{n}\text{tr}\Phi^2(l_i)\right) .\label{eq52}
		 			\end{align}
		 			This shows that the limiting variance of $\sqrt{n}\mathcal{V}_{k1}$ reduces to the limits
		 			of the three expectation terms in \eqref{eq52}.
		 			
		 			We now turn to
		 			\begin{align*}
		 				\operatorname{Var}([\sqrt{n}\mathcal{V}_{k2}]_{jj})
		 				=\mathbb{E}\frac{2r_i^2(1-l_i)^4}{q}\text{tr}\bbY\bbW^\top \bbU_2\Phi^2(l_i)\bbU_2^\top \bbW\bbY^\top \bbY\bbW^\top \bbU_2\Phi^2(l_i)\bbU_2^\top \bbW\bbY^\top .
		 			\end{align*}
		 			Expanding this variance via the cumulant expansion yields
		 			\begin{align*}
		 				&\mathbb{E}\frac{1}{q}\text{tr}\bbY\bbW^\top \bbU_2\Phi^2(l_i)\bbU_2^\top \bbW\bbY^\top \bbY\bbW^\top \bbU_2\Phi^2(l_i)\bbU_2^\top \bbW\bbY^\top \\
		 				=&-\mathbb{E}\frac{1}{n}\text{tr}\bbH\Phi(l_i)\frac{1}{q}\text{tr}\bbE\Phi^2(l_i)\bbU_2^\top \bbW\bbY^\top \bbY\bbW^\top \bbU_2\Phi^2(l_i)\\
		 				&-\mathbb{E}\frac{1}{n}\text{tr}\bbH\Phi^2(l_i)\frac{1}{q}\text{tr}\bbE\Phi(l_i)\bbU_2^\top \bbW\bbY^\top \bbY\bbW^\top \bbU_2\Phi^2(l_i)\\
		 				&+\mathbb{E}\frac{1}{n}\text{tr}\bbE\Phi^2(l_i)\frac{1}{q}\text{tr}\bbU_2^\top \bbW\bbY^\top \bbY\bbW^\top \bbU_2\Phi^2(l_i)\\
		 				&-\mathbb{E}\frac{1}{q}\text{tr}\bbH\Phi^2(l_i)\bbU_2^\top \bbW\bbY^\top \bbY\bbW^\top \bbU_2\Phi(l_i)\frac{1}{n}\text{tr}\bbE\Phi^2(l_i)\\
		 				&-\mathbb{E}\frac{1}{q}\text{tr}\bbH\Phi^2(l_i)\bbU_2^\top \bbW\bbY^\top \bbY\bbW^\top \bbU_2\Phi^2(l_i)\frac{1}{n}\text{tr}\bbE\Phi(l_i)+O\left( \frac{1}{n}\right) \\
		 				=&J_{k1}(r_i)\!+\!J_{k2}(r_i)\!+\!J_{k3}(r_i)\!+\!J_{k4}(r_i)\!+\!J_{k5}(r_i)\!+\!O\left( \frac{1}{n}\right) .
		 			\end{align*}
		 			Indeed, each normalized trace term is a smooth function of Gaussian entries with gradient of order $O(n^{-1})$,
		 			so the Gaussian Poincar\'e inequality implies the variances of the above trace terms are of order $O(n^{-2})$.
		 			Using the previously obtained estimates for
		 			$n^{-1}\text{tr}\bbE\Phi$, $n^{-1}\text{tr}\bbH\Phi$, $n^{-1}\text{tr}\bbE\Phi^2$, and
		 			$n^{-1}\text{tr}\bbH\Phi^2$, and controlling the approximation errors via the
		 			Cauchy--Schwarz inequality, it remains to compute the limits of three key quantities
		 			stated in Lemma~\ref{ccalemma3}:
		 			\begin{align*}
		 				&\frac{1}{q}\text{tr}\bbU_2^\top \bbW\bbY^\top \bbY\bbW^\top \bbU_2\Phi^2(l_i),\ 
		 				\frac{1}{q}\text{tr}\bbE\Phi^2(l_i)\bbU_2^\top  \bbW\bbY^\top \bbY\bbW^\top \bbU_2\Phi^2(l_i),\\
		 				& \frac{1}{q}\text{tr}\bbH\Phi^2(l_i)\bbU_2^\top \bbW\bbY^\top \bbY\bbW^\top \bbU_2 \Phi^2(l_i).
		 			\end{align*}
		 			
		 			\begin{lemma}[Resolvent trace estimates]\label{ccalemma3}
		 				Under the same conditions as in Theorem~\ref{th1}, the following estimates hold:
		 				\begin{align}
		 					&\mathbb{E}\frac{1}{n}\text{tr}\Phi^2(l_i)=\frac{c_1(1-c_2)r_i^4}{\left( (1-c_1)r_i+c_1\right) ^2(\eta r_i-\omega)^2(\eta r_i^2-\omega)}+O\left( \frac{1}{\sqrt{n}}\right)\notag\\[0.5em]
		 					&~~~~~~~~~~~~~~~~~~:=P_1(r_i)+O\left( \frac{1}{\sqrt{n}}\right),\label{l31}\\[0.5em]
		 					&\mathbb{E}\frac{1}{n}\text{tr}\bbE\Phi^3(l_i)=F_3(r_i)+O\left( n^{-\frac{1}{4}}\right), \label{l32}\\[0.5em]
		 					&\mathbb{E}\frac{1}{n}\text{tr}\bbE\Phi^2(l_i)\bbE\Phi^2(l_i)=Q_3(r_i)+O\left( n^{-\frac{1}{16}}\right), \label{l33}\\[0.5em]
		 					&\mathbb{E}\frac{1}{q}\text{tr}\bbY\bbW^\top \bbU_2\Phi^2(l_i)\bbE\Phi^2(l_i)\bbU_2^\top \bbW\bbY^\top =\frac{Q_4(r_i)}{c_2}+O\left( n^{-\frac{1}{16}}\right), \label{l34}\\[0.5em]
		 					&\mathbb{E}\ \frac{1}{q}\text{tr}\bbY\bbW^\top \bbU_2\Phi^2(l_i)\bbE\Phi(l_i)\bbU_2^\top \bbW\bbY^\top =\frac{Q_5(r_i)}{c_2}+O\left( n^{-\frac{1}{4}}\right), \label{l35}\\[0.5em]
		 					&\mathbb{E}\ \frac{1}{q}\text{tr}\bbY\bbW^\top \bbU_2\Phi^3(l_i)\bbU_2^\top \bbW\bbY^\top =\frac{Q_7(r_i)}{c_2}+O\left( n^{-\frac{1}{8}}\right) ,\label{l36}
		 				\end{align}
		 				For notational convenience, the definitions of $F_3(r_i)$, $Q_3(r_i)$, $Q_4(r_i)$,
		 				$Q_5(r_i)$, and $Q_7(r_i)$ are given in \eqref{EPhi3}, \eqref{Q3}, \eqref{Q4},
		 				\eqref{Q5}, and \eqref{Q7}, respectively.
		 			\end{lemma}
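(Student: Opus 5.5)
We prove the six estimates of Lemma~\ref{ccalemma3} with the same two tools used for $n^{-1}\tr\Phi$ in the proof of Lemma~\ref{ccalemma1}: Gaussian Poincar\'e concentration and the cumulant (Stein) expansion. Throughout we work in the representation $\Phi(z)=((1-z)\bbE-z\bbH)^{-1}$, where $n\bbE=\mathcal X\mathcal X^\top$ and $n\bbH=\mathcal Y\mathcal Y^\top$ are independent Wisharts. We also use that $\bbY\bbW^\top\bbU_2=(\bbY\bbP_y)\bbW^\top\bbU_2$ depends on $\bbW$ only through $\bbU_2^\top\bbW\bbP_y$, which is the $\mathcal X$-part.

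\textbf{Step 1: Reduce the higher powers to derivatives of lower-order functionals.} This handles \eqref{l31}. Since $\Phi'(z)=\Phi(z)(\bbE+\bbH)\Phi(z)$, the identity $(\bbE+\bbH)\Phi=z^{-1}(\bbE\Phi-\bbI)$ gives
\[
\tfrac1n\tr\Phi^2=z\,\tfrac{d}{dz}\tfrac1n\tr\Phi\cdot(\ldots)
\]
More precisely, $\tfrac1n\tr\Phi^2$ is a linear combination of $\tfrac1n\tr\bbE\Phi^2$ and $\tfrac1n\tr\bbH\Phi^2$. By \eqref{EP2}--\eqref{HP2} it is then an explicit function of $QQ_1$ and $QQ_2$. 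Evaluating at $z=\gamma_i$ and using $l_i-\gamma_i=O_p(n^{-1/2})$ together with the uniform bound on $\|\Phi\|$ away from the support yields $P_1(r_i)$ with error $O(n^{-1/2})$.

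Likewise $\tfrac1n\tr\bbE\Phi^3$ equals $\tfrac12\tfrac{d}{dz}\tfrac1n\tr\bbE\Phi^2$ plus combinations of already-known traces, again via the same identity. So $F_3$ comes from differentiating the deterministic limit of $\tfrac1n\tr\bbE\Phi^2$. Interchanging the limit and the derivative is justified by Cauchy's integral formula on a small circle around $\gamma_i$, exactly as in the proof of \eqref{phi'}. Each differentiation loses part of the rate, which explains the weaker errors $n^{-1/4}$, $n^{-1/8}$, $n^{-1/16}$: we use circles of radius $n^{-\delta}$ so that the random point $l_i$ stays inside them.

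\textbf{Step 2: Obtain closed self-consistent equations for the two-resolvent traces.} These are \eqref{l33}--\eqref{l36}. For $Q_3$ we write $\bbE=n^{-1}\sum_{ab}x_{ab}(\cdots)$ and apply Stein's lemma to each $x_{ab}$ in $\tfrac1n\tr\bbE\Phi^2\bbE\Phi^2$. Differentiating $\Phi$ with respect to $x_{ab}$ produces $-(1-z)\Phi(\partial\bbE)\Phi$, so the expansion yields a linear equation. In it, $T:=\tfrac1n\tr\bbE\Phi^2\bbE\Phi^2$ appears multiplied by $1+(1-z)\tfrac1n\tr\bbE\Phi$-type factors, and the remaining terms are products of one-resolvent traces already computed. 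We replace expectations of products by products of expectations using the Poincar\'e bound $\operatorname{Var}=O(n^{-2})$ on normalized traces. Solving the linear equation gives $Q_3$.

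For \eqref{l34}--\eqref{l36} we first perform the Gaussian integration over $\bbY$ given $\bbW$, as in the computation of $I_{k2}$ in Lemma~\ref{ccalemma1}. Alternatively, using the Haar/projection structure, we express $\bbW\bbY^\top\bbY\bbW^\top$ in terms of $\bbE$ and a correction involving $\tfrac qn$. This reduces $Q_4,Q_5,Q_7$ to linear combinations of $Q_3$, $F_3$, $P_1$ and the traces $\tfrac1n\tr\bbE\Phi^2\bbE\Phi$ and $\tfrac1n\tr\bbE\Phi\bbE\Phi^2$. The latter two are obtained by one more Stein expansion or by $z$-differentiation.

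\textbf{Step 3: Move from $\gamma_i$ to the random point $l_i$.} All estimates are first proved at deterministic $z$ in a neighbourhood of $\gamma_i$, uniformly on compacts. We then use Lipschitz continuity in $z$, which holds because $\|\Phi\|$ and its derivatives are bounded with high probability, together with $l_i-\gamma_i=O_p(n^{-1/2})$. The expectation of a random-argument quantity is handled by truncating on the high-probability event.

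The \textbf{main obstacle} is Step 2 for $Q_3$ and $Q_4$. The Stein expansion generates many terms, and the coefficient of the unknown must be shown to be bounded away from zero at $\gamma_i$. This is a stability condition that should follow from $r_i>r_c$, i.e.\ from $\gamma_i>d_+$. I would verify it by expressing that coefficient through $s(z)$ and checking that it does not vanish outside $[d_-,d_+]$.
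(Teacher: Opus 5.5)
There is a genuine gap in your Step~1, and it carries over into Step~2. The identity $(\bbE+\bbH)\Phi=z^{-1}(\bbE\Phi-\bbI)$ gives $\Phi'=z^{-1}(\Phi\bbE\Phi-\Phi)$ and $\tr\Phi=(1-z)\tr\bbE\Phi^2-z\tr\bbH\Phi^2$. That is exactly why $\frac1n\tr\bbE\Phi^2$ and $\frac1n\tr\bbH\Phi^2$ are functions of $\frac1n\tr\Phi$ and its derivative, i.e.\ \eqref{EP2}--\eqref{HP2}. It does not give $\frac1n\tr\Phi^2$. Differentiating the pencil in $z$ inserts $\bbE+\bbH$ between resolvents, never $\bbI$: indeed $z\,\partial_z\tr\Phi=\tr\bbE\Phi^2-\tr\Phi$, not $\tr\Phi^2$. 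The algebraic relation $\tr\Phi^2=(1-z)\tr\bbE\Phi^3-z\tr\bbH\Phi^3$ only moves the problem up one power. No identity $\tr\Phi^2=\alpha\tr\bbE\Phi^2+\beta\tr\bbH\Phi^2$ with coefficients depending only on $z$ can hold; already for $\bbE=e\bbI$, $\bbH=h\bbI$ it would force $\alpha e+\beta h=1$ for all $e,h$.

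The paper obtains \eqref{l31} from a separate cumulant expansion in the $\mathcal X$-entries,
\[
\mathbb E\tfrac1n\tr\bbE\Phi^2=\tfrac qn\,\mathbb E\tfrac1n\tr\Phi^2-(1-l_i)\,\mathbb E\Big(\tfrac1n\tr\bbE\Phi\,\tfrac1n\tr\Phi^2+\tfrac1n\tr\bbE\Phi^2\,\tfrac1n\tr\Phi\Big)+O(n^{-1}).
\]
It then factorizes the products via Poincar\'e and solves for $\mathbb E\frac1n\tr\Phi^2$. So $P_1$ equals the limit in \eqref{EP2} times the limit of $\big(1+(1-l_i)\frac1n\tr\Phi\big)/\big(\frac qn-(1-l_i)\frac1n\tr\bbE\Phi\big)$. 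This is a Stein self-consistency relation, not an algebraic identity.

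Your derivative step for \eqref{l32} is misassigned for the same reason. We have $\partial_z\frac1n\tr\bbE\Phi^2=\frac2z\big(\frac1n\tr\bbE\Phi\bbE\Phi^2-\frac1n\tr\bbE\Phi^2\big)$, which produces the mixed trace $\frac1n\tr\bbE\Phi\bbE\Phi^2$ (the paper uses this later, via $QQ_2'$, inside $Q_3$). The trace $\frac1n\tr\bbE\Phi^3$ instead comes from $\partial_z\frac1n\tr\Phi^2=\frac2z\big(\frac1n\tr\bbE\Phi^3-\frac1n\tr\Phi^2\big)$. This gives $F_3=P_1+\frac{\gamma_i}{2}P_2$, with $P_2$ the $z$-derivative of the limit of $\frac1n\tr\Phi^2$.

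The same issue recurs in Step~2. The Stein expansion of $\frac1n\tr\bbE\Phi^2\bbE\Phi^2$ produces $\frac1n\tr\bbE\Phi^4$, which is not among the ``already computed'' traces. The paper needs another cumulant expansion for $\mathbb E\frac1n\tr\Phi^3$ (its $P_3$), followed by $\partial_z\frac1n\tr\Phi^3=\frac3z\big(\frac1n\tr\bbE\Phi^4-\frac1n\tr\Phi^3\big)$ (its $P_{3,1}$). The missing principle is that each pure power $\tr\Phi^m$ needs its own Stein expansion, and $z$-differentiation only converts $\tr\Phi^m$ into $\tr\bbE\Phi^{m+1}$.

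This chain, with finite-difference differentiation halving the exponent each time, is also what produces the rates $n^{-1/4},n^{-1/8},n^{-1/16}$ in the paper. Shrinking Cauchy circles is unnecessary, since a fixed circle around $\gamma_i$ already contains $l_i$ with high probability.

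Once the chain is inserted, the rest of your plan is in line with the paper: factorization, transfer to $l_i$, and the $\bbY$-cumulant expansion for $Q_4,Q_5,Q_7$, where the paper also uses $\bbH\Phi^2=\frac{1-l_i}{l_i}\bbE\Phi^2-\frac1{l_i}\Phi$. Finally, the stability condition you call the main obstacle is explicit. The two coefficients to invert have positive limits: $1+(1-l_i)\frac1n\tr\Phi\to\frac{r_i}{(1-c_1)r_i+c_1}$ and $\frac qn-(1-l_i)\frac1n\tr\bbE\Phi\to\frac{c_2((1-c_1)r_i+c_1)}{r_i}$.
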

		 			The detailed proof of Lemma~\ref{ccalemma3} is deferred to Section~\ref{ccalemma3proof}.
		 			
		 			For the cross terms $I_{k3}$ and $I_{k4}$, one has $I_{k3}+I_{k4}=o_p(1)$.
		 			Moreover, their fluctuation can be quantified as follows:
		 			\begin{align*}
		 				I_{k3}+I_{k4}=2(1-l_i)\boldsymbol{\Gamma}_k \bbU_1^\top \bbW(\bbP_y\!-\!l_i\bbI_n)\bbW^\top \bbU_2\Phi^{2}(l_i)\bbU_2^\top \bbW\bbV_y^\top \boldsymbol{\Lambda}_y\tilde{\bbU}_y\boldsymbol{\Lambda}_k,
		 			\end{align*}
		 			where \(\bbU_1^\top\bbW\) and the Haar matrix \(\tilde{\bbU}_y\) are independent of
		 			\[
		 			\mathcal B=(\bbP_y-l_i\bbI_n)\bbW^\top \bbU_2
		 			\Phi^2(l_i)\bbU_2^\top \bbW\bbV_y^\top\boldsymbol{\Lambda}_y.
		 			\]
		 			Therefore,
		 			\begin{align*}
		 				&\operatorname{Var}(\sqrt{n}[I_{k3}+I_{k4}]_{jj})\\[0.5em]
		 				=&4(1-l_i)^2(1-r_i)r_i\mathbb{E}\frac{1}{q}\text{tr}\mathcal{B}\mathcal{B}^\top \\[0.5em]
		 				=&4(1-l_i)^3(1-r_i)r_i\mathbb{E}\frac{1}{q}\text{tr}\bbY\bbW^\top \bbU_2\Phi^2(l_i)\bbE\Phi^2(l_i)\bbU_2^\top \bbW\bbY^\top \\[0.5em]
		 				&+4(1-l_i)^2(1-r_i)r_i\mathbb{E}\frac{p}{n}\frac{1}{q}\text{tr}\bbY\bbW^\top \bbU_2\Phi^3(l_i)\bbU_2^\top \bbW\bbY^\top \\[0.5em]
		 				=&\frac{4(\eta r_i-\omega)^3(1-r_i)^4Q_4(r_i)}{c_2r_i^2}
		 				+\frac{4c_1(\eta r_i-\omega)^2(1-r_i)^3Q_7(r_i)}{c_2r_i}+O(n^{-1/8}).
		 			\end{align*}
		 			
		 			Combining \eqref{eq52} with \eqref{l31}--\eqref{l33} yields
		 			\begin{align}
		 				\operatorname{Var}([\sqrt{n}\mathcal{V}_{k1}]_{jj})=&2(1\!-\!r_i)^2P_1(r_i)-4\frac{\eta r_i-\omega}{r_i}(1-r_i)^3(P_1(r_i)+F_3(r_i))\notag\\
		 				&+\frac{2(\eta r_i-\omega)^2}{r_i^2}(1-r_i)^4(P_1(r_i)+2F_3(r_i)+Q_3(r_i))\notag\\
		 				&+O\left(n^{-1/16} \right), \label{varvk1}
		 			\end{align}
		 			where the explicit expressions of $P_1(r_i)$, $F_3(r_i)$, and $Q_3(r_i)$ are given in
		 			\eqref{l31}, \eqref{EPhi3}, and \eqref{Q3}, respectively.
		 			
		 			We next compute $\operatorname{Var}([\sqrt{n}\mathcal{V}_{k2}]_{jj})$. Further derivations yield
		 			\begin{align}
		 				&J_{k1}(r_i)=\frac{c_1r_i+\omega(1-r_i)}{c_2r_i-c_2(\eta r_i-\omega)(1-r_i)}Q_4(r_i),\label{Jk1}\\[0.5em]
		 				&J_{k2}(r_i)=\frac{-c_1r_i^2Q_5(r_i)}{c_2(\eta r_i^2-\omega)((1-c_1)r_i+c_1)^2} ,\label{Jk2}
		 			\end{align}
		 			where $Q_4(r_i)$ and $Q_5(r_i)$ are defined in \eqref{Q4} and \eqref{Q5}.
		 			
		 			Next, the term $n^{-1}\text{tr}\bbU_2^\top \bbW\bbY^\top \bbY\bbW^\top \bbU_2\Phi^2(l_i)
		 			=n^{-1}\text{tr}\bbY\bbW^\top \bbU_2\Phi^2(l_i)\bbU_2^\top \bbW\bbY^\top$
		 			appearing in $J_{k3}$ has already been computed in the analysis of $I_{k2}$, and thus
		 			\begin{align}
		 				J_{k3}(r_i)=&\frac{\omega(1-c_2)r_i^2}{c_2(\eta r_i-\omega)^2(\eta r_i^2-\omega)}\left( \left( \frac{c_1r_i+\omega(1-r_i)}{r_i-(\eta r_i-\omega)(1-r_i)}-\frac{\omega}{\eta r_i-\omega}\right) F_1(r_i)\right.\notag\\[0.5em]
		 				&~~~~~~~~~~~~~~~~~~~~~~~~~~~~~~~~~~~~~~~~~~~~~~~~~~~~~~~~~~~~~~~~~~~~\left.+(c_1+c_2+\frac{\omega(1-r_i)}{r_i})F_2(r_i)\right) .\label{Jk3}
		 			\end{align}
		 			Finally, we simplify $J_{k4}$ and $J_{k5}$. Note that
		 			\begin{align*}
		 				\frac{1}{n}\text{tr}\bbH\Phi^2(l_i)\bbU_2^\top \bbW\bbY^\top \bbY\bbW^\top \bbU_2\Phi(l_i)
		 				=&\frac{1-l_i}{l_i}\frac{1}{n}\text{tr} \bbE\Phi^2(l_i)\bbU_2^\top \bbW\bbY^\top \bbY\bbW^\top \bbU_2\Phi(l_i)\\[0.5em]
		 				&-\frac{1}{l_in}\text{tr}\Phi(l_i)\bbU_2^\top \bbW\bbY^\top \bbY\bbW^\top \bbU_2\Phi(l_i),\\[0.5em]
		 				\frac{1}{n}\text{tr}\bbH\Phi^2(l_i)\bbU_2^\top \bbW\bbY^\top \bbY\bbW^\top \bbU_2\Phi^2(l_i)
		 				=&\frac{1-l_i}{l_i}\frac{1}{n}\text{tr} \bbE\Phi^2(l_i)\bbU_2^\top \bbW\bbY^\top \bbY\bbW^\top \bbU_2\Phi^2(l_i)\\[0.5em]
		 				&-\frac{1}{l_in}\text{tr}\Phi(l_i)\bbU_2^\top \bbW\bbY^\top \bbY\bbW^\top \bbU_2\Phi^2(l_i),
		 			\end{align*}
		 			which leads to
		 			\begin{align}
		 				J_{k4}(r_i)=&-\frac{\omega(1-c_2)r_i^2}{(\eta r_i-\omega)^2(\eta r_i^2-\omega)}\left(\frac{(\eta r_i-\omega)(1-r_i)}{c_2(r_i-(\eta r_i-\omega)(1-r_i))}Q_5(r_i)\!\right.\notag\\[0.5em]
		 				&\left.-\frac{\left( (c_1r_i+\omega(1-r_i))/(r_i-(\eta r_i-\omega)(1-r_i))-\omega/(\eta r_i-\omega)\right) F_1(r_i)}{c_2(r_i-(\eta r_i-\omega)(1-r_i))/r_i}\right.\notag\\[0.5em]
		 				&~~~~~~~~~~~~~~~~~~~~~~~~~~~~~~~~~~~~~~~~~~~~~~~~~~~~~~~~~~~~~~~~~\left.+\frac{(c_1+c_2+\omega(1-r_i)/r_i)F_2(r_i)}{c_2(r_i-(\eta r_i-\omega)(1-r_i))/r_i}\right)\!\notag \\[0.5em]
		 				=& -\frac{\omega(1-c_2)r_i^2}{(\eta r_i-\omega)^2(\eta r_i^2-\omega)}\left(\frac{(\eta r_i-\omega)(1-r_i)}{c_2(r_i-(\eta r_i-\omega)(1-r_i))}Q_5(r_i)\!\right.\notag\\[0.5em]
		 				&\left.-\frac{r_i\Big(\big((c_1r_i+\omega(1-r_i))(\eta r_i-\omega)-\omega\big(r_i-(\eta r_i-\omega)(1-r_i)\big)\big)F_1(r_i)\Big)}
		 				{c_2\big(r_i-(\eta r_i-\omega)(1-r_i)\big)^2(\eta r_i-\omega)}\right.\notag\\[0.5em]
		 				&\left.+\frac{r_i\big(c_1+c_2+\omega(1-r_i)/r_i\big)\big(r_i-(\eta r_i-\omega)(1-r_i)\big)(\eta r_i-\omega)F_2(r_i)}
		 				{c_2\big(r_i-(\eta r_i-\omega)(1-r_i)\big)^2(\eta r_i-\omega)}
		 				\right)\notag\\[0.5em]
		 				=& -\frac{\omega(1-c_2)\,r_i^2}
		 				{c_2(\eta r_i^2-\omega)(\eta r_i-\omega)^3\big(r_i-(\eta r_i-\omega)(1-r_i)\big)^2}
		 				\Bigg(
		 				(\eta r_i-\omega)^2(1-r_i)\big(r_i-(\eta r_i-\omega)(1-r_i)\big)\, \notag\\
		 				&Q_5(r_i)+ r_i\Big(\omega\big(r_i-(\eta r_i-\omega)(1-r_i)\big)
		 				-(c_1r_i+\omega(1-r_i))(\eta r_i-\omega)\Big)\,F_1(r_i) \notag\\
		 				&+\big(r_i(c_1+c_2)+\omega(1-r_i)\big)(\eta r_i-\omega)\big(r_i-(\eta r_i-\omega)(1-r_i)\big)\,F_2(r_i)
		 				\Bigg),\label{Jk4}\\[0.5em]
		 				J_{k5}(r_i)=&\frac{\omega(1-r_i)}{c_2(r_i-(\eta r_i-\omega)(1-r_i))}Q_4(r_i)\notag\\[0.5em]
		 				&+\dfrac{-\omega(1-r_i)r_i}{c_2r_i(\eta r_i-\omega)(1-r_i)-c_2(\eta r_i-\omega)^2(1-r_i)^2}Q_7(r_i)\notag\\[0.5em]
		 				=&\frac{\omega(1-r_i)}{c_2(r_i-(\eta r_i-\omega)(1-r_i))}Q_4(r_i)-\frac{\omega r_i}{c_2(\eta r_i-\omega)\,(r_i-(\eta r_i-\omega)(1-r_i))}Q_7(r_i).\label{Jk5}
		 			\end{align}
		 			These identities follow from the resolvent equation
		 			$((1-z)\bbE-z\bbH)\Phi(z)=\bbI_{p-k}$, which implies
		 			$\bbH\Phi^2(z)=(1-z)\bbE\Phi^2(z)/z-\Phi(z)/z$.
		 			
		 			Note that the entries of $\bbU_1^\top \bbW$ are i.i.d.\ standard normal and are
		 			independent of $\bbY$ and $\bbU_2^\top \bbW$. Conditional on $\bbY$ and $\bbU_2^\top \bbW$,
		 			we obtain
		 			\begin{align*}
		 				\mathbb{E}(\sqrt{n}[I_{k3}]_{jj}\sqrt{n}[\mathcal{V}_{k1}]_{jj}\mid\bbY,\bbU_2^\top \bbW)=0,
		 			\end{align*}
		 			and similarly
		 			\begin{align*}
		 				\mathbb{E}(\sqrt{n}[I_{k4}]_{jj}\sqrt{n}[\mathcal{V}_{k1}]_{jj}\mid\bbY,\bbU_2^\top \bbW)=0.
		 			\end{align*}
		 			By properties of the standard normal distribution, we also have
		 			\begin{align*}
		 				\mathbb{E}(\sqrt{n}[I_{k3}]_{jj}\sqrt{n}[\mathcal{V}_{k2}]_{jj}\mid\bbY,\bbU_2^\top \bbW)
		 				=\mathbb{E}(\sqrt{n}[I_{k4}]_{jj}\sqrt{n}[\mathcal{V}_{k2}]_{jj}\mid\bbY,\bbU_2^\top \bbW)=0.
		 			\end{align*}
		 			
		 			We now justify the derivative estimates, the boundedness of the relevant matrices, and the use of the Poincar\'e inequality. We begin with the basic deterministic approximation for the resolvent trace. By the cumulant expansion, uniformly for \(z\) in a fixed neighborhood of \(\gamma_i\) separated from the limiting spectral support,
		 			\[
		 			\mathbb E\frac{1}{n}\operatorname{tr}\Phi(z)-QQ_1(z)
		 			=
		 			O\left(\frac1n\right).
		 			\]
		 			Moreover, by the Gaussian Poincar\'e inequality,
		 			\[
		 			\frac{1}{n}\operatorname{tr}\Phi(z)
		 			-
		 			\mathbb E\frac{1}{n}\operatorname{tr}\Phi(z)
		 			=
		 			O_p\left(\frac1n\right).
		 			\]
		 			Hence
		 			\[
		 			\frac{1}{n}\operatorname{tr}\Phi(z)-QQ_1(z)
		 			=
		 			O_p\left(\frac1n\right),
		 			\]
		 			uniformly for \(z\) in the same neighborhood.
		 			
		 			We next explain how derivative estimates are obtained from this approximation. Let \(h=m^{-\alpha}\), where \(h\downarrow0\). By the resolvent identity,
		 			\[
		 			\frac{
		 				\frac{1}{n}\operatorname{tr}\Phi(l_i+h)
		 				-
		 				\frac{1}{n}\operatorname{tr}\Phi(l_i)
		 			}{h}
		 			=
		 			\frac{1}{n}\operatorname{tr}\left(
		 			\Phi(l_i)\bbU_2^\top\bbW\bbW^\top\bbU_2\Phi(l_i)
		 			\right)
		 			+
		 			O_p(h).
		 			\]
		 			On the other hand, using the deterministic equivalent \(QQ_1\), we have
		 			\[
		 			\begin{aligned}
		 				&\frac{
		 					\frac{1}{n}\operatorname{tr}\Phi(l_i+h)
		 					-
		 					\frac{1}{n}\operatorname{tr}\Phi(l_i)
		 				}{h}
		 				\\
		 				=&
		 				\frac{QQ_1(l_i+h)-QQ_1(l_i)}{h}
		 				\\
		 				&+
		 				\frac{
		 					\frac{1}{n}\operatorname{tr}\Phi(l_i+h)-QQ_1(l_i+h)
		 					-
		 					\left(
		 					\frac{1}{n}\operatorname{tr}\Phi(l_i)-QQ_1(l_i)
		 					\right)
		 				}{h}
		 				\\
		 				=&
		 				QQ_2(l_i+\xi h)
		 				+
		 				O_p\left(\frac{1}{nh}\right),
		 			\end{aligned}
		 			\]
		 			for some \(\xi\in(0,1)\). Since \(QQ_2(z)\) is smooth in a neighborhood of \(\gamma_i\), Taylor expansion gives
		 			\[
		 			QQ_2(l_i+\xi h)
		 			=
		 			QQ_2(l_i)+O_p(h).
		 			\]
		 			Therefore,
		 			\[
		 			\frac{
		 				\frac{1}{n}\operatorname{tr}\Phi(l_i+h)
		 				-
		 				\frac{1}{n}\operatorname{tr}\Phi(l_i)
		 			}{h}
		 			=
		 			QQ_2(l_i)
		 			+
		 			O_p(h)
		 			+
		 			O_p\left(\frac{1}{nh}\right).
		 			\]
		 			Choosing \(h=n^{-1/2}\) balances the two error terms and yields
		 			\[
		 			\frac{
		 				\frac{1}{n}\operatorname{tr}\Phi(l_i+n^{-1/2})
		 				-
		 				\frac{1}{n}\operatorname{tr}\Phi(l_i)
		 			}{n^{-1/2}}
		 			=
		 			QQ_2(l_i)+O_p(n^{-1/2}).
		 			\]
		 			Combining this with the resolvent identity above, we obtain the derivative-type estimate
		 			\[
		 			\frac{1}{n}\operatorname{tr}
		 			\Phi(l_i)\bbU_2^\top\bbW\bbW^\top\bbU_2\Phi(l_i)
		 			-
		 			QQ_2(l_i)
		 			=
		 			O_p(n^{-1/2}).
		 			\]
		 			
		 			The same finite-difference argument will be used repeatedly below. More generally, if a resolvent functional \(f_n(z)\) satisfies
		 			\[
		 			f_n(z)-f(z)=O_p(n^{-a})
		 			\]
		 			uniformly for \(z\) in a neighborhood of \(\gamma_i\), and if the relevant deterministic limit \(f\) is smooth with uniformly bounded derivatives on this neighborhood, then its derivative satisfies
		 			\[
		 			f_n'(z)-f'(z)=O_p(n^{-a/2}).
		 			\]
		 			Thus each differentiation of a deterministic equivalent may reduce the error exponent by a factor of two. This explains the error orders appearing in the higher-order trace estimates.
		 			
		 			We also record the boundedness estimates used throughout the proof. All matrices involved are products of factors such as \(\Phi(z)\), \(\bbE\), \(\bbH\), \(\bbW\bbW^\top\), and \(\bbY\bbY^\top\). By the spectral norm inequality,
		 			\[
		 			\|\bbA\bbB\|\leq \|\bbA\|\|\bbB\|,
		 			\]
		 			it is enough to control these factors separately. From equation (S3.23) in Bao et al.\ (2019) \cite{bao2019canonical}, for \(z\) in the above neighborhood,
		 			\[
		 			\|\Phi(z)\|
		 			\leq
		 			\|(C_{w_2y}-z)^{-1}\|\,\|S_{w_2w_2}^{-1}\|
		 			\leq C
		 			\]
		 			with probability tending to one. Moreover, since \(\bbW\bbW^\top\) and \(\bbY\bbY^\top\) are Gaussian sample covariance matrices, their spectral norms are bounded with probability tending to one. The deterministic matrices \(\bbE\) and \(\bbH\) also satisfy
		 			\[
		 			\|\bbE\|\leq C,
		 			\qquad
		 			\|\bbH\|\leq C.
		 			\]
		 			Consequently, all normalized trace functionals appearing below are uniformly bounded in probability.
		 			
		 			Finally, we justify the use of the Poincar\'e inequality. Let \(F_n\) denote any normalized trace functional of the form considered above, for example \(F_n=n^{-1}\operatorname{tr}\Phi(z)\), or a normalized trace involving additional bounded factors. Differentiating \(F_n\) with respect to the Gaussian entries of \(\bbW\) and \(\bbY\) produces sums of normalized traces with bounded resolvent factors. Using the uniform spectral norm bounds above, we obtain
		 			\[
		 			\sum_{\alpha}
		 			\mathbb E
		 			\left|
		 			\frac{\partial F_n}{\partial \xi_\alpha}
		 			\right|^2
		 			\leq
		 			\frac{C}{n^2},
		 			\]
		 			where the sum runs over all Gaussian entries involved. Therefore, the Gaussian Poincar\'e inequality gives
		 			\[
		 			\operatorname{Var}(F_n)\leq \frac{C}{n^2}.
		 			\]
		 			In particular,
		 			\[
		 			F_n-\mathbb EF_n=O_p\left(\frac1n\right).
		 			\]
		 			This applies to \(n^{-1}\operatorname{tr}\Phi(z)\) and, by the same argument, to all normalized trace quantities used in the cumulant expansion. Hence the products of such normalized traces can be replaced by products of their expectations up to negligible errors by the Cauchy--Schwarz inequality.
		 			This completes the proof of Lemma~\ref{ccalemma2}.
		 		\end{proof}
		 		
		 		\section{Proof of Lemma~\ref{ccalemma3}}\label{ccalemma3proof}
		 		
		 		We prove \eqref{l31}--\eqref{l36} in four parts. Throughout the proof, \(r_i\) is fixed and \(l_i\) denotes the sample spiked eigenvalue associated with \(r_i\). Since \(l_i\) is separated from the limiting bulk, we have
		 		\[
		 		l_i-\gamma(r_i)=O_p(n^{-1/2}),
		 		\]
		 		where
		 		\[
		 		\gamma(r_i)
		 		=
		 		\frac{r_i-(\eta r_i-\omega)(1-r_i)}{r_i}.
		 		\]
		 		Hence
		 		\[
		 		1-\gamma(r_i)
		 		=
		 		\frac{(\eta r_i-\omega)(1-r_i)}{r_i}.
		 		\]
		 		We shall also use the identity
		 		\[
		 		r_i-(\eta r_i-\omega)(1-r_i)
		 		=
		 		\big((1-c_1)r_i+c_1\big)\big((1-c_2)r_i+c_2\big).
		 		\]
		 		
		 		First, the deterministic equivalent of \(n^{-1}\operatorname{tr}\Phi(l_i)\) is
		 		\[
		 		\frac1n\operatorname{tr}\Phi(l_i)
		 		=
		 		F_1(r_i)+O_p(n^{-1/2}),
		 		\]
		 		where
		 		\[
		 		F_1(r_i)
		 		=
		 		-\frac{c_1r_i}
		 		{\big((1-c_1)r_i+c_1\big)(\eta r_i-\omega)}.
		 		\]
		 		Moreover,
		 		\[
		 		\frac1n\operatorname{tr}\bbE\Phi^2(l_i)
		 		=
		 		\frac1n\operatorname{tr}\Phi(l_i)
		 		+
		 		l_i\left(\frac1n\operatorname{tr}\Phi(l_i)\right)'.
		 		\]
		 		Therefore the deterministic equivalent of \(QQ_2(l_i)\) is
		 		\[
		 		QQ_2(l_i)=F_2(r_i)+O_p(n^{-1/2}),
		 		\]
		 		where
		 		\[
		 		F_2(r_i)
		 		=
		 		\frac{
		 			r_i^2
		 			\left(
		 			\omega(1-c_2)r_i\big((1-c_1)r_i+c_1\big)
		 			+
		 			c_1(\eta r_i-\omega)(\eta r_i^2-\omega)
		 			\right)
		 		}
		 		{
		 			\big((1-c_1)r_i+c_1\big)^2
		 			\big((1-c_2)r_i+c_2\big)
		 			(\eta r_i-\omega)^2
		 			(\eta r_i^2-\omega)
		 		}.
		 		\]
		 		With this expression,
		 		\[
		 		F_1(r_i)+\gamma(r_i)F_2(r_i)
		 		=
		 		\frac{\omega(1-c_2)r_i^2}
		 		{(\eta r_i-\omega)^2(\eta r_i^2-\omega)}.
		 		\]
		 		Thus
		 		\[
		 		\frac1n\operatorname{tr}\bbE\Phi^2(l_i)
		 		=
		 		\frac{\omega(1-c_2)r_i^2}
		 		{(\eta r_i-\omega)^2(\eta r_i^2-\omega)}
		 		+
		 		O_p(n^{-1/2}).
		 		\]
		 		
		 		We shall also need the derivative of \(QQ_2\) with respect to \(z\). Since
		 		\[
		 		\gamma'(r_i)=\frac{\eta r_i^2-\omega}{r_i^2},
		 		\]
		 		the chain rule gives
		 		\[
		 		\begin{aligned}
		 			QQ_2'(r_i)
		 			=&
		 			\frac{
		 				r_i^4
		 				\left(
		 				\omega(1-c_2)r_i\big((1-c_1)r_i+c_1\big)
		 				+
		 				c_1(\eta r_i-\omega)(\eta r_i^2-\omega)
		 				\right)
		 			}
		 			{
		 				\big((1-c_1)r_i+c_1\big)^2
		 				\big((1-c_2)r_i+c_2\big)
		 				(\eta r_i-\omega)^2
		 				(\eta r_i^2-\omega)^2
		 			}
		 			\\
		 			&\times
		 			\left(
		 			\frac{2}{r_i}
		 			+
		 			\frac{
		 				\omega(1-c_2)\big(2(1-c_1)r_i+c_1\big)
		 				+
		 				c_1\left(
		 				\eta(\eta r_i^2-\omega)
		 				+
		 				2\eta r_i(\eta r_i-\omega)
		 				\right)
		 			}
		 			{
		 				\omega(1-c_2)r_i\big((1-c_1)r_i+c_1\big)
		 				+
		 				c_1(\eta r_i-\omega)(\eta r_i^2-\omega)
		 			}
		 			\right.
		 			\\
		 			&\qquad\left.
		 			-\frac{2(1-c_1)}{(1-c_1)r_i+c_1}
		 			-\frac{1-c_2}{(1-c_2)r_i+c_2}
		 			-\frac{2\eta}{\eta r_i-\omega}
		 			-\frac{2\eta r_i}{\eta r_i^2-\omega}
		 			\right).
		 		\end{aligned}
		 		\]
		 		
		 		\textbf{Step 1. Proof of \eqref{l31}.} By the cumulant expansion formula, we have
		 		\[
		 		\begin{aligned}
		 			\mathbb E\frac1n\operatorname{tr}\bbE\Phi^2(l_i)
		 			=&
		 			\frac qn\mathbb E\frac1n\operatorname{tr}\Phi^2(l_i)
		 			\\
		 			&-(1-l_i)\mathbb E\left(
		 			\frac1n\operatorname{tr}\bbE\Phi(l_i)
		 			\frac1n\operatorname{tr}\Phi^2(l_i)
		 			+
		 			\frac1n\operatorname{tr}\bbE\Phi^2(l_i)
		 			\frac1n\operatorname{tr}\Phi(l_i)
		 			\right)
		 			+
		 			O(n^{-1}).
		 		\end{aligned}
		 		\]
		 		By the Gaussian Poincar\'e inequality and the Cauchy--Schwarz inequality, the product terms can be factorized up to negligible errors. Hence
		 		\[
		 		\mathbb E\frac1n\operatorname{tr}\Phi^2(l_i)
		 		=
		 		\frac{
		 			1+(1-l_i)\mathbb E\frac1n\operatorname{tr}\Phi(l_i)
		 		}
		 		{
		 			\frac1n\operatorname{tr}\bbP_y
		 			-
		 			(1-l_i)\mathbb E\frac1n\operatorname{tr}\bbE\Phi(l_i)
		 		}
		 		\mathbb E\frac1n\operatorname{tr}\bbE\Phi^2(l_i)
		 		+
		 		O(n^{-1}).
		 		\]
		 		Substituting the deterministic equivalents above gives
		 		\[
		 		\mathbb E\frac1n\operatorname{tr}\Phi^2(l_i)
		 		=
		 		P_1(r_i)+O(n^{-1/2}),
		 		\]
		 		where
		 		\[
		 		P_1(r_i)
		 		=
		 		\frac{
		 			c_1(1-c_2)r_i^4
		 		}
		 		{
		 			\big((1-c_1)r_i+c_1\big)^2
		 			(\eta r_i-\omega)^2
		 			(\eta r_i^2-\omega)
		 		}.
		 		\]
		 		This proves \eqref{l31}.
		 		
		 		\textbf{Step 2. Proof of \eqref{l32}.} Differentiating \(n^{-1}\operatorname{tr}\Phi^2(z)\) with respect to \(z\), we obtain
		 		\[
		 		\left(\frac1n\operatorname{tr}\Phi^2(z)\right)'
		 		=
		 		\frac{2}{nz}
		 		\left(
		 		\operatorname{tr}\bbE\Phi^3(z)
		 		-
		 		\operatorname{tr}\Phi^2(z)
		 		\right).
		 		\]
		 		Therefore, at \(z=l_i\),
		 		\[
		 		\mathbb E\frac1n\operatorname{tr}\bbE\Phi^3(l_i)
		 		=
		 		\mathbb E\frac1n\operatorname{tr}\Phi^2(l_i)
		 		+
		 		\frac{l_i}{2}
		 		\mathbb E\left(\frac1n\operatorname{tr}\Phi^2(l_i)\right)'.
		 		\]
		 		Since
		 		\[
		 		P_1(r_i)
		 		=
		 		\frac{
		 			c_1(1-c_2)r_i^4
		 		}
		 		{
		 			\big((1-c_1)r_i+c_1\big)^2
		 			(\eta r_i-\omega)^2
		 			(\eta r_i^2-\omega)
		 		},
		 		\]
		 		we get
		 		\[
		 		\mathbb E\left(\frac1n\operatorname{tr}\Phi^2(l_i)\right)'
		 		=
		 		P_2(r_i)+O(n^{-1/4}),
		 		\]
		 		where
		 		\[
		 		\begin{aligned}
		 			P_2(r_i)
		 			=&
		 			\frac{
		 				c_1(1-c_2)r_i^6
		 			}
		 			{
		 				\big((1-c_1)r_i+c_1\big)^2
		 				(\eta r_i-\omega)^2
		 				(\eta r_i^2-\omega)^2
		 			}
		 			\\
		 			&\times
		 			\left(
		 			\frac{4}{r_i}
		 			-\frac{2(1-c_1)}{(1-c_1)r_i+c_1}
		 			-\frac{2\eta}{\eta r_i-\omega}
		 			-\frac{2\eta r_i}{\eta r_i^2-\omega}
		 			\right).
		 		\end{aligned}
		 		\]
		 		Consequently,
		 		\[
		 		\mathbb E\frac1n\operatorname{tr}\bbE\Phi^3(l_i)
		 		=
		 		F_3(r_i)+O(n^{-1/4}),
		 		\]
		 		where
		 		\begin{align}
		 		F_3(r_i)
		 		=
		 		P_1(r_i)
		 		+
		 		\frac{r_i-(\eta r_i-\omega)(1-r_i)}{2r_i}P_2(r_i).\label{EPhi3}
		 		\end{align}
		 		This proves \eqref{l32}.
		 		
		 		For later use, we also record the derivative of \(QQ_6\). Since \(QQ_6(l_i)\) is the deterministic equivalent of
		 		\[
		 		\left(\frac1n\operatorname{tr}\Phi^2(l_i)\right)',
		 		\]
		 		we have \(QQ_6(l_i)=P_2(r_i)+O_p(n^{-1/4})\). Hence
		 		\[
		 		\begin{aligned}
		 			QQ_6'(r_i)
		 			=&
		 			\frac{
		 				c_1(1-c_2)r_i^8
		 			}
		 			{
		 				\big((1-c_1)r_i+c_1\big)^2
		 				(\eta r_i-\omega)^2
		 				(\eta r_i^2-\omega)^3
		 			}
		 			\\
		 			&\times
		 			\left(
		 			\left(
		 			\frac{4}{r_i}
		 			-\frac{2(1-c_1)}{(1-c_1)r_i+c_1}
		 			-\frac{2\eta}{\eta r_i-\omega}
		 			-\frac{2\eta r_i}{\eta r_i^2-\omega}
		 			\right)^2
		 			\right.
		 			\\
		 			&\quad
		 			+
		 			\left(
		 			\frac{2}{r_i}
		 			-\frac{2\eta r_i}{\eta r_i^2-\omega}
		 			\right)
		 			\left(
		 			\frac{4}{r_i}
		 			-\frac{2(1-c_1)}{(1-c_1)r_i+c_1}
		 			-\frac{2\eta}{\eta r_i-\omega}
		 			-\frac{2\eta r_i}{\eta r_i^2-\omega}
		 			\right)
		 			\\
		 			&\quad\left.
		 			-\frac{4}{r_i^2}
		 			+
		 			\frac{2(1-c_1)^2}{\big((1-c_1)r_i+c_1\big)^2}
		 			+
		 			\frac{2\eta^2}{(\eta r_i-\omega)^2}
		 			+
		 			\frac{2\eta(\eta r_i^2+\omega)}{(\eta r_i^2-\omega)^2}
		 			\right).
		 		\end{aligned}
		 		\]
		 		
		 		\textbf{Step 3. Proof of \eqref{l33}.} Applying the cumulant expansion formula to
		 		\[
		 		\mathbb E\frac1n\operatorname{tr}\bbE\Phi^2(l_i)\bbE\Phi^2(l_i),
		 		\]
		 		we obtain
		 		\[
		 		\begin{aligned}
		 			&\mathbb E\frac1n\operatorname{tr}\bbE\Phi^2(l_i)\bbE\Phi^2(l_i)
		 			\\
		 			=&
		 			\mathbb E\left(
		 			\frac1n\operatorname{tr}\bbP_y
		 			-
		 			\frac{1-l_i}{n}\operatorname{tr}\bbE\Phi(l_i)
		 			\right)
		 			\frac1n\operatorname{tr}\bbE\Phi^4(l_i)
		 			\\
		 			&-
		 			\mathbb E\frac{1-l_i}{n}
		 			\operatorname{tr}\bbE\Phi^2(l_i)\bbE\Phi^2(l_i)
		 			\frac1n\operatorname{tr}\Phi(l_i)
		 			\\
		 			&+
		 			\mathbb E\frac1n\operatorname{tr}\bbE\Phi^2(l_i)
		 			\left(
		 			\frac1n\operatorname{tr}\Phi^2(l_i)
		 			-
		 			\frac{1-l_i}{n}\operatorname{tr}\bbE\Phi^3(l_i)
		 			\right)
		 			\\
		 			&-
		 			\mathbb E\frac{1-l_i}{n}
		 			\operatorname{tr}\bbE\Phi(l_i)\bbE\Phi^2(l_i)
		 			\frac1n\operatorname{tr}\Phi^2(l_i)
		 			+
		 			O(n^{-1}).
		 		\end{aligned}
		 		\]
		 		The Gaussian Poincar\'e inequality again permits factorization of products of normalized traces, up to negligible errors.
		 		
		 		First,
		 		\[
		 		\begin{aligned}
		 			\mathbb E\frac1n\operatorname{tr}\bbE\Phi(l_i)\bbE\Phi^2(l_i)
		 			=&
		 			F_1(r_i)
		 			+
		 			2\frac{r_i-(\eta r_i-\omega)(1-r_i)}{r_i}F_2(r_i)
		 			\\
		 			&+
		 			\frac{\big(r_i-(\eta r_i-\omega)(1-r_i)\big)^2}{2r_i^2}
		 			QQ_2'(r_i)
		 			+
		 			O(n^{-1/4}).
		 		\end{aligned}
		 		\]
		 		Second, applying the cumulant expansion formula to
		 		\[
		 		\mathbb E\frac1n\operatorname{tr}\bbE\Phi^3(l_i),
		 		\]
		 		we have
		 		\[
		 		\mathbb E\frac1n\operatorname{tr}\Phi^3(l_i)
		 		=
		 		P_3(r_i)+O(n^{-1/8}),
		 		\]
		 		where
		 		\[
		 		\begin{aligned}
		 			P_3(r_i)
		 			=&
		 			\frac{r_i^2}
		 			{c_2\big((1-c_1)r_i+c_1\big)^2}
		 			F_3(r_i)
		 			\\
		 			&+
		 			\frac{(\eta r_i-\omega)(1-r_i)}
		 			{c_2\big((1-c_1)r_i+c_1\big)}
		 			P_1(r_i)
		 			\frac{\omega(1-c_2)r_i^2}
		 			{(\eta r_i-\omega)^2(\eta r_i^2-\omega)}.
		 		\end{aligned}
		 		\]
		 		Differentiating \(n^{-1}\operatorname{tr}\Phi^3(z)\) gives
		 		\[
		 		\left(\frac1n\operatorname{tr}\Phi^3(z)\right)'
		 		=
		 		\frac{3}{nz}
		 		\left(
		 		\operatorname{tr}\bbE\Phi^4(z)
		 		-
		 		\operatorname{tr}\Phi^3(z)
		 		\right).
		 		\]
		 		Thus
		 		\[
		 		\mathbb E\frac1n\operatorname{tr}\bbE\Phi^4(l_i)
		 		=
		 		\mathbb E\frac1n\operatorname{tr}\Phi^3(l_i)
		 		+
		 		\frac{l_i}{3}
		 		\mathbb E\left(\frac1n\operatorname{tr}\Phi^3(l_i)\right)'.
		 		\]
		 		Therefore,
		 		\[
		 		\mathbb E\frac1n\operatorname{tr}\bbE\Phi^4(l_i)
		 		=
		 		P_3(r_i)
		 		+
		 		\frac{r_i-(\eta r_i-\omega)(1-r_i)}{3r_i}P_{3,1}(r_i)
		 		+
		 		O(n^{-1/16}),
		 		\]
		 		where
		 		\[
		 		\begin{aligned}
		 			P_{3,1}(r_i)
		 			=&
		 			\frac{2c_1r_i^3}
		 			{c_2\big((1-c_1)r_i+c_1\big)^3(\eta r_i^2-\omega)}
		 			F_3(r_i)
		 			\\
		 			&+
		 			\frac{r_i^2}
		 			{c_2\big((1-c_1)r_i+c_1\big)^2}
		 			\left(
		 			\frac32P_2(r_i)
		 			+
		 			\frac{r_i-(\eta r_i-\omega)(1-r_i)}{2r_i}QQ_6'(r_i)
		 			\right)
		 			\\
		 			&+
		 			\frac{r_i^2}{\eta r_i^2-\omega}
		 			\left(
		 			\frac{\eta(1-r_i)-(\eta r_i-\omega)}
		 			{c_2\big((1-c_1)r_i+c_1\big)}
		 			-
		 			\frac{(\eta r_i-\omega)(1-r_i)(1-c_1)}
		 			{c_2\big((1-c_1)r_i+c_1\big)^2}
		 			\right)
		 			\\
		 			&\qquad\times
		 			P_1(r_i)
		 			\frac{\omega(1-c_2)r_i^2}
		 			{(\eta r_i-\omega)^2(\eta r_i^2-\omega)}
		 			\\
		 			&+
		 			\frac{(\eta r_i-\omega)(1-r_i)}
		 			{c_2\big((1-c_1)r_i+c_1\big)}
		 			\\
		 			&\quad\times
		 			\left(
		 			P_2(r_i)
		 			\frac{\omega(1-c_2)r_i^2}
		 			{(\eta r_i-\omega)^2(\eta r_i^2-\omega)}
		 			+
		 			P_1(r_i)
		 			\left(
		 			2F_2(r_i)
		 			+
		 			\frac{r_i-(\eta r_i-\omega)(1-r_i)}{r_i}QQ_2'(r_i)
		 			\right)
		 			\right).
		 		\end{aligned}
		 		\]
		 		
		 		Substituting these estimates into the cumulant expansion above yields
		 		\[
		 		\mathbb E\frac1n\operatorname{tr}\bbE\Phi^2(l_i)\bbE\Phi^2(l_i)
		 		=
		 		Q_3(r_i)+O(n^{-1/16}),
		 		\]
		 		where
		 		
		 		\begin{align}\label{Q3}
		 			Q_3(r_i)
		 			=&
		 			\frac{
		 				c_2\big((1-c_1)r_i+c_1\big)^2
		 			}
		 			{r_i^2}
		 			\left(
		 			P_3(r_i)
		 			+
		 			\frac{r_i-(\eta r_i-\omega)(1-r_i)}{3r_i}P_{3,1}(r_i)
		 			\right)
		 			\\\notag
		 			&-
		 			\frac{(1-c_1)r_i+c_1}{r_i}
		 			\left(
		 			P_1(r_i)
		 			-
		 			\frac{(\eta r_i-\omega)(1-r_i)}{r_i}F_3(r_i)
		 			\right)
		 			\frac{\omega(1-c_2)r_i^2}
		 			{(\eta r_i-\omega)^2(\eta r_i^2-\omega)}
		 			\\\notag
		 			&-
		 			\frac{
		 				\big((1-c_1)r_i+c_1\big)(\eta r_i-\omega)(1-r_i)
		 			}
		 			{r_i^2}
		 			P_1(r_i)
		 			\\\notag
		 			&\qquad\times
		 			\left(
		 			F_1(r_i)
		 			+
		 			2\frac{r_i-(\eta r_i-\omega)(1-r_i)}{r_i}F_2(r_i)
		 			+
		 			\frac{\big(r_i-(\eta r_i-\omega)(1-r_i)\big)^2}{2r_i^2}QQ_2'(r_i)
		 			\right).\notag
		 		\end{align}
		 		
		 		This proves \eqref{l33}.
		 		
		 		\textbf{Step 4. Proof of \eqref{l34}--\eqref{l36}.} First, we prove \eqref{l34}. Applying the cumulant expansion formula to the \(Y\)-entries gives
		 		\[
		 		\begin{aligned}
		 			&\mathbb E\frac1n\operatorname{tr}
		 			\bbY\bbW^\top\bbU_2\Phi^2(l_i)\bbE\Phi^2(l_i)\bbU_2^\top\bbW\bbY^\top
		 			\\
		 			=&
		 			\mathbb E\left(
		 			c_2-\frac1n\operatorname{tr}\bbH\Phi(l_i)
		 			\right)
		 			\frac1n\operatorname{tr}\bbE\Phi^2(l_i)\bbE\Phi^2(l_i)
		 			\\
		 			&+
		 			\mathbb E\left(
		 			c_2-\frac1n\operatorname{tr}\bbE\Phi(l_i)
		 			\right)
		 			\frac1n\operatorname{tr}\bbH\Phi^2(l_i)\bbE\Phi^2(l_i)
		 			\\
		 			&+
		 			\mathbb E\frac1n\operatorname{tr}\bbH\Phi^2(l_i)
		 			\frac1n\operatorname{tr}\bbE\Phi^2(l_i)
		 			\\
		 			&-
		 			\mathbb E\frac1n\operatorname{tr}\bbE\Phi^2(l_i)
		 			\frac1n\operatorname{tr}\bbE\Phi(l_i)\bbH\Phi^2(l_i)
		 			\\
		 			&-
		 			\mathbb E\frac1n\operatorname{tr}\bbH\Phi^2(l_i)
		 			\frac1n\operatorname{tr}\bbE\Phi(l_i)\bbE\Phi^2(l_i)
		 			+
		 			O(n^{-1}).
		 		\end{aligned}
		 		\]
		 		Using
		 		\[
		 		\bbH\Phi^2(l_i)
		 		=
		 		\frac{1-l_i}{l_i}\bbE\Phi^2(l_i)
		 		-
		 		\frac1{l_i}\Phi(l_i),
		 		\]
		 		and substituting the estimates proved above, we obtain
		 		\[
		 		\mathbb E\frac1q\operatorname{tr}
		 		\bbY\bbW^\top\bbU_2\Phi^2(l_i)\bbE\Phi^2(l_i)\bbU_2^\top\bbW\bbY^\top
		 		=
		 		\frac{Q_4(r_i)}{c_2}
		 		+
		 		O(n^{-1/16}),
		 		\]
		 		where
		 		
		 		\begin{align}\label{Q4}
		 			Q_4(r_i)
		 			=&
		 			\frac{(c_1+c_2)r_i+2\omega(1-r_i)}
		 			{r_i-(\eta r_i-\omega)(1-r_i)}
		 			Q_3(r_i)
		 			\\\notag
		 			&-
		 			\frac{r_i\big(c_2(\eta r_i-\omega)+\omega\big)}
		 			{(\eta r_i-\omega)\big(r_i-(\eta r_i-\omega)(1-r_i)\big)}
		 			P_3(r_i)
		 			\\\notag
		 			&-
		 			\frac{\omega(1-c_2)r_i\big(r_i-(\eta r_i-\omega)(1-r_i)\big)}
		 			{(\eta r_i-\omega)^2(\eta r_i^2-\omega)}
		 			F_3(r_i)
		 			\\\notag
		 			&+
		 			\frac{
		 				\big(c_1r_i+\omega(1-r_i)\big)(\eta r_i-\omega)(\eta r_i^2-\omega)
		 				+
		 				\omega(1-c_2)r_i^2(1-r_i)
		 			}
		 			{
		 				(\eta r_i-\omega)(\eta r_i^2-\omega)
		 				\big(r_i-(\eta r_i-\omega)(1-r_i)\big)
		 			}
		 			QQ_2'(r_i)
		 			\\\notag
		 			&+
		 			\frac{\omega(1-c_2)c_1r_i^4}
		 			{
		 				(\eta r_i-\omega)^2
		 				(\eta r_i^2-\omega)^2
		 				\big((1-c_1)r_i+c_1\big)^2
		 			}
		 			\\\notag
		 			&+
		 			\frac{\omega(1-c_2)r_i^2\big(c_1r_i+\omega(1-r_i)\big)}
		 			{
		 				(\eta r_i-\omega)^2
		 				(\eta r_i^2-\omega)
		 				\big(r_i-(\eta r_i-\omega)(1-r_i)\big)
		 			}
		 			\\\notag
		 			&+
		 			\frac{\omega^2(1-c_2)^2r_i^4(1-r_i)}
		 			{
		 				(\eta r_i-\omega)^3
		 				(\eta r_i^2-\omega)^2
		 				\big(r_i-(\eta r_i-\omega)(1-r_i)\big)
		 			}.\notag
		 		\end{align}
		 		
		 		This proves \eqref{l34}.
		 		
		 		Next, we prove \eqref{l35}. Applying the same cumulant expansion argument to
		 		\[
		 		\mathbb E\frac1q\operatorname{tr}
		 		\bbY\bbW^\top\bbU_2\Phi^2(l_i)\bbE\Phi(l_i)\bbU_2^\top\bbW\bbY^\top
		 		\]
		 		and using the same resolvent identity, we get
		 		\[
		 		\mathbb E\frac1q\operatorname{tr}
		 		\bbY\bbW^\top\bbU_2\Phi^2(l_i)\bbE\Phi(l_i)\bbU_2^\top\bbW\bbY^\top
		 		=
		 		\frac{Q_5(r_i)}{c_2}
		 		+
		 		O(n^{-1/4}),
		 		\]
		 		where
		 		
		 		\begin{align}\label{Q5}
		 			Q_5(r_i)
		 			=&
		 			\left(
		 			c_2+
		 			\frac{c_1r_i+\omega(1-r_i)}
		 			{r_i-(\eta r_i-\omega)(1-r_i)}
		 			\right)
		 			\left(
		 			\frac{\omega(1-c_2)r_i^2}
		 			{(\eta r_i-\omega)^2(\eta r_i^2-\omega)}
		 			+
		 			QQ_2'(r_i)
		 			\right)
		 			\\\notag
		 			&-
		 			\frac{
		 				c_1\omega\eta r_i^4
		 			}
		 			{
		 				\big((1-c_1)r_i+c_1\big)^2
		 				(\eta r_i-\omega)^2
		 				(\eta r_i^2-\omega)^2
		 			}
		 			\\\notag
		 			&+
		 			\left(
		 			c_2+\frac{\omega}{\eta r_i-\omega}
		 			\right)
		 			\left(
		 			\frac{(\eta r_i-\omega)(1-r_i)}
		 			{r_i-(\eta r_i-\omega)(1-r_i)}
		 			-
		 			\frac{r_i-(\eta r_i-\omega)(1-r_i)}
		 			{r_i}
		 			\right)
		 			\\\notag
		 			&\qquad\times
		 			\left(
		 			\frac{\omega(1-c_2)r_i^2}
		 			{(\eta r_i-\omega)^2(\eta r_i^2-\omega)}
		 			+
		 			QQ_2'(r_i)
		 			\right).\notag
		 		\end{align}
		 		
		 		Here we used the simplification
		 		\[
		 		\begin{aligned}
		 			&
		 			\frac{\eta\omega r_i^2}{(\eta r_i^2-\omega)(\eta r_i-\omega)^2}
		 			-
		 			\frac{\eta\omega(1-r_i)r_i}{(\eta r_i^2-\omega)(\eta r_i-\omega)}
		 			-
		 			\frac{\omega}{\eta r_i-\omega}
		 			+
		 			\frac{\omega}{\eta r_i^2-\omega}
		 			\\
		 			=&
		 			\frac{\omega\eta r_i^2}
		 			{(\eta r_i-\omega)^2(\eta r_i^2-\omega)}.
		 		\end{aligned}
		 		\]
		 		This proves \eqref{l35}.
		 		
		 		Finally, we prove \eqref{l36}. Applying the cumulant expansion formula to
		 		\[
		 		\mathbb E\frac1q\operatorname{tr}
		 		\bbY\bbW^\top\bbU_2\Phi^3(l_i)\bbU_2^\top\bbW\bbY^\top
		 		\]
		 		and using
		 		\[
		 		\bbH\Phi^3(l_i)
		 		=
		 		\frac{1-l_i}{l_i}\bbE\Phi^3(l_i)
		 		-
		 		\frac1{l_i}\Phi^2(l_i),
		 		\]
		 		we obtain
		 		\[
		 		\mathbb E\frac1q\operatorname{tr}
		 		\bbY\bbW^\top\bbU_2\Phi^3(l_i)\bbU_2^\top\bbW\bbY^\top
		 		=
		 		\frac{Q_7(r_i)}{c_2}
		 		+
		 		O(n^{-1/8}),
		 		\]
		 		where
		 		
		 		\begin{align}\label{Q7}
		 			Q_7(r_i)
		 			=&
		 			\left(
		 			c_2+\frac{\omega}{\eta r_i-\omega}
		 			\right)
		 			\left(
		 			\frac{(\eta r_i-\omega)(1-r_i)}
		 			{r_i-(\eta r_i-\omega)(1-r_i)}
		 			F_3(r_i)
		 			\right.
		 			\\\notag
		 			&\qquad
		 			-
		 			\frac{
		 				r_i^2
		 				\big(2r_iP_1(r_i)+r_i-(\eta r_i-\omega)P_2(r_i)\big)
		 			}
		 			{
		 				2\big((1-c_1)r_i+c_1\big)
		 				\big(r_i-(\eta r_i-\omega)(1-r_i)\big)
		 				\big(c_2r_i-\omega(1-r_i)\big)
		 			}
		 			\\\notag
		 			&\qquad\left.
		 			+
		 			\frac{
		 				\omega(1-c_2)(1-r_i)r_i^3P_1(r_i)
		 			}
		 			{
		 				(\eta r_i-\omega)
		 				(\eta r_i^2-\omega)
		 				\big(r_i-(\eta r_i-\omega)(1-r_i)\big)
		 				\big(c_2r_i-\omega(1-r_i)\big)
		 			}
		 			\right)
		 			\\\notag
		 			&-
		 			\frac{
		 				\omega(1-c_2)c_1r_i^4
		 			}
		 			{
		 				(\eta r_i-\omega)^2
		 				(\eta r_i^2-\omega)^2
		 				\big((1-c_1)r_i+c_1\big)^2
		 			}
		 			\\\notag
		 			&+
		 			\left(
		 			c_1+c_2+\frac{\omega(1-r_i)}{r_i}
		 			\right)F_3(r_i).\notag
		 		\end{align}
		 		In the last simplification, we used
		 		\[
		 		r_i+(\eta r_i-\omega)(1-r_i)F_1(r_i)
		 		=
		 		\frac{r_i^2}{(1-c_1)r_i+c_1}.
		 		\]
		 		This proves \eqref{l36}.
		 		
		 		Combining \eqref{l31}--\eqref{l36}, Lemma~\ref{ccalemma3} follows.
		 		
		 				\section{Proofs of Proposition \ref{pro1} and Proposition \ref{pro2}}\label{proofpro}
		 				\begin{proof}[Proof of Proposition \ref{pro1}]
		 					By Theorem \ref{th:1.4}, we have
		 					$$l_i\xrightarrow{p}\gamma(r_i).$$
		 					Since $\gamma(\cdot)$ is locally one-to-one in a neighborhood of $r_i$, and $\gamma^{-1}(\cdot)$ denotes the corresponding local inverse, there exists a neighborhood $U$ of $r_i$ such that $\gamma$ is one-to-one on $U$. Let $V=\gamma(U)$ be the corresponding neighborhood of $\gamma(r_i)$ on which the local inverse $\gamma^{-1}$ is well defined. Since $\gamma$ is continuous in the present setting, the local inverse $\gamma^{-1}$ is continuous at $\gamma(r_i)$.
		 					
		 					We now show that $\hat{r}_i\to r_i$ in probability. Let $\epsilon>0$, since $\gamma^{-1}$ is continuous at $\gamma(r_i)$, there exists $\delta>0$ such that
		 					$$
		 					|x-\gamma(r_i)|<\delta
		 					\quad\Rightarrow\quad
		 					|\gamma^{-1}(x)-r_i|<\epsilon .
		 					$$
		 					Consequently,
		 					$$
		 					\{|\hat r_i-r_i|\geq \epsilon\}
		 					\subset
		 					\{|l_i-\gamma(r_i)|\geq \delta\},
		 					$$
		 					and hence
		 					$$
		 					\mathbb P(|\hat r_i-r_i|\geq \epsilon)
		 					\leq
		 					\mathbb P(|l_i-\gamma(r_i)|\geq \delta)\to 0.
		 					$$
		 					Therefore,
		 					$$
		 					\hat r_i\xrightarrow{p}r_i.
		 					$$
		 				
		 					Next, by assumption, $d(\cdot)$ is continuous at $r_i$. Hence, by the continuous mapping theorem,
		 					$$d(\hat{r}_i)\xrightarrow{p}d(r_i).$$ Since the function $x\mapsto 1/(1+x)$ is continuous at $d(r_i)$. Applying the continuous mapping theorem again yields
		 					$$\hat{\mu}_i=\dfrac{1}{1+d(\hat{r}_i)}\xrightarrow{p}\frac{1}{1+d(r_i)}=\mu_i.$$
		 					This proves
		 					$$\hat{r}_i\xrightarrow{p}r_i,~~\hat{\mu}_i\xrightarrow{p}\mu_i.$$
		 				\end{proof}
		 				
		 				\begin{proof}[Proof of Proposition \ref{pro2}]
		 					By Proposition \ref{pro1}, $\hat r_i\xrightarrow{p}r_i$. Since $d(\cdot)$ and $\sigma^2(\cdot)$ are continuous at $r_i$, the continuous mapping theorem yields
		 					$$d(\hat r_i)\xrightarrow{p}d(r_i),~~\sigma^2(\hat r_i)\xrightarrow{p}\sigma^2(r_i).$$
		 					Moreover, $1+d(r_i)\neq0$. Hence,
		 					$$\hat{\tau}_i^2=\dfrac{\sigma^2(\hat r_i)}{(1+d(\hat r_i))^4}\xrightarrow{p}\dfrac{\sigma^2(r_i)}{(1+d(r_i))^4}=\tau_i^2.$$
		 					Since $0<\sigma^2(r_i)<\infty$, we have $0<\tau_i^2<\infty$. Therefore, by the continuity of $x\mapsto \sqrt{x}$ on $(0,\infty)$,
		 					$$\hat\tau_i=\sqrt{\hat{\tau}_i^2}\xrightarrow{p}\sqrt{\tau_i^2}=\tau_i.$$
		 					This proves the proposition.
		 				\end{proof}
		 				\textbf{Acknowledgments}
		 				
		 				The authors would like to thank Zhigang Bao for many illuminating discussions in an early stage of this research.
		 				
		 				\textbf{funding}
		 				
		 				Xiaozhuo Zhang was partially supported by NSFC Grants No. 12401338, the Science Research Project of the Education Department of Jilin Province Grant No. JJKH20261178KJ.
		 				
		 				Jiang Hu was partially supported by NSFC Grants No. 12571280, Science and Technology Development Plan Project of Jilin Province, China, No. 20260101010JJ.
\iffalse		 				
\begin{acks}[Acknowledgments]
The authors would like to thank Zhigang Bao for many illuminating discussions in an early stage of this research.
\end{acks}
		\begin{funding}
			Xiaozhuo Zhang was partially supported by NSFC Grants No. 12401338, the Science Research Project of the Education Department of Jilin Province Grant No. JJKH20261178KJ.
			
			Jiang Hu was partially supported by NSFC Grants No. 12571280, Science and Technology Development Plan Project of Jilin Province, China, No. 20260101010JJ. 
		\end{funding}
\fi
		%%%%%%%%%%%%%%%%%%%%%%%%%%%%%%%%%%%%%%%%%%%%%%%%%%%%%%%%%%%%% %% The Bibliography %% %% %% %% imsart-???.bst will be used to %% %% create a.BBL file for submission. %% %% %% %% Note that the displayed Bibliography will not %% %% necessarily be rendered by Latex exactly as specified %% %% in the online Instructions for Authors. %% %% %% %% MR numbers will be added by VTeX. %% %% %% %% Use \cite{...} to cite references in text. %% %% %% %%%%%%%%%%%%%%%%%%%%%%%%%%%%%%%%%%%%%%%%%%%%%%%%%%%%%%%%%%%%% %% if your bibliography is in bibtex format, uncomment commands: 
		\newpage
		\bibliographystyle{apalike} 
		% Style BST file (imsart-number.bst or imsart-nameyear.bst) 
		\bibliography{ref_cca.bib} % Bibliography file (usually '*.bib') 
		\end{document}